\newcommand{\C}{\mathcal{C}}  
\newcommand{\D}{\mathcal{D}}  
\newcommand{\R}{\mathbb{R}}  
\newcommand{\Z}{\mathbb{Z}}  
\newcommand{\K}{\mathcal{K}}  
\newcommand{\Kb}{\mathcal{K}_{b}}  
\newcommand{\Kx}{\mathcal{K}_{x}}  
\newcommand{\Q}{\mathcal{Q}}  
\newcommand{\F}{\mathcal{F}}  
\newcommand{\X}{\mathcal{X}}  
\newcommand{\Y}{\mathcal{Y}}  
\newcommand{\MICONV}{MI-CONV}
\newcommand{\MILP}{MILP}
\newcommand{\MICONIC}{MI-CONIC}
\newtheorem{theorem}{Theorem}
\newtheorem{example}{Example}
\newtheorem{lemma}{Lemma}
\DeclareMathOperator{\cl}{cl}       
\DeclareMathOperator{\intr}{int}    
\DeclareMathOperator{\conv}{conv}   
\DeclareMathOperator{\opt}{opt}
\newcommand{\polar}[1]{#1^{\#}}     
\newcommand{\Kgt}[1]{\succ_{#1}}
\newcommand{\Kgeq}[1]{\succeq_{#1}}
\newcommand{\Kleq}[1]{\preceq_{#1}}
\newcommand{\knorm}[2]{\left| #2 \right|_{#1}}
\title{Disjunctive cuts in Mixed-Integer Conic Optimization}
\author[1]{Andrea Lodi}
\author[1]{Mathieu Tanneau\thanks{Corresponding author: mathieu.tanneau@polymtl.ca}}
\author[2]{Juan-Pablo Vielma}
\affil[1]{CERC Data Science, Polytechnique Montréal}
\affil[2]{massuchusetts Institute of Technology}
\begin{document}

\maketitle

\begin{abstract}

    This paper studies disjunctive cutting planes in Mixed-Integer Conic Programming.
    Building on conic duality, we formulate a cut-generating conic program for separating disjunctive cuts, and investigate the impact of the normalization condition on its resolution.
    In particular, we show that a careful selection of normalization guarantees its solvability and conic strong duality.
    Then, we highlight the shortcomings of separating conic-infeasible points in an outer-approximation context, and propose conic extensions to the classical lifting and monoidal strengthening procedures.
    Finally, we assess the computational behavior of various normalization conditions in terms of gap closed, computing time and cut sparsity.
    In the process, we show that our approach is competitive with the internal lift-and-project cuts of a state-of-the-art solver.
    
\end{abstract}

\section{Introduction}

Mixed-Integer Convex Optimization (\MICONV) is a fundamental class of Mixed-Integer Non-Linear Optimization problems with applications such as risk management, non-linear physics (e.g., power systems and chemical engineering) and logistics, just to mention a few. 
Because of such a relevance, classical algorithms for Mixed-Integer Linear Optimization (\MILP) have been successfully extended to \MICONV, like Branch and Bound \cite{bonami2008algorithmic} or Benders decomposition \cite{Geoffrion1970}; others like the Outer Approximation scheme \cite{Duran1986} have been designed specifically for \MICONV.
In addition, several software tools are available for solving general \MICONV\ problems, see, e.g., the recent comparison in \cite{Kronqvist2019}.
Finally, some specific classes of \MICONV\ problems, like Mixed-Integer (Convex) Quadratically Constrained Quadratic Optimization (MIQCQP) problems are now supported by the major commercial solvers.

Conic optimization is viewed as a more numerically stable and tractable alternative to general convex optimization \cite{Ben-Tal2001}.
Both classes are equivalent: conic optimization problems are convex, and any convex optimization problem can be written as a conic optimization problem \cite{KilincKarzan2015_MinimalValidInequalities}.
Modeling tools such as disciplined convex optimization \cite{Grant2006} can provide conic formulations for most --if not all-- convex optimization problems that arise in practice \cite{Mosek2020_ModelingCookbook}.
In particular, \cite{Lubin2018} recently showed that all convex instances in MINLPLib can be formulated as Mixed-Integer Conic Optimization (\MICONIC) problems using only a handful of cones.

Nevertheless, the intrinsic difference between convex and conic optimization lies in a problem's algebraic description: in the former, constraints are formulated as $f(x) \leq 0$, where $f$ is a convex function, whereas, in the latter, they are expressed using conic inequalities of the form $Ax -b \in \K$, where $A$ is a matrix, $b$ is a vector and $\K$ is a cone (see \cite{Ben-Tal2001} and Section \ref{sec:background}).
In particular, conic formulations enable the use of conic duality theory, which underlies a number of theoretical insights and practical tools.
Major commercial solvers have supported Mixed-Integer Second Order Cone Programming (MISOCP) for some time, and more general \MICONIC\ problems are now supported by a number of solvers, e.g., Mosek and Pajarito \cite{coey2018outer,Extended-Formulations-in-Mixed-Integer-Convex-CONF,Lubin2018}.

This paper builds on two specific aspects that we consider fundamental for solving \MICONV\ problems.
First, given that cutting planes are instrumental to solving \MILP, a number of authors have looked at various approaches to compute cuts for \MICONV\ problems and, nowadays, linear cutting planes are part of the arsenal of some \MICONV\ solvers.
Despite this (partial) success, some fundamental questions in this area are left unanswered.
Second, recent experience has shown that \emph{conic} formulations of \MICONV\ problems display enviable properties
that make them preferable, from the solving viewpoint, to generic \MICONV\ formulations \cite{Lubin2018,coey2018outer}.

In that context, motivated by the success of disjunctive cuts in \MILP, the paper focuses on computational aspects of disjunctive cuts for \MICONIC\ problems.
In addition, we answer the (somehow) natural question of what one can gain in terms of cutting planes by using a problem's \emph{conic} structure, as well as several questions left open by previous works on the topic.
In the remainder of this section, we review the literature on the subject and outline our main contributions.

\subsection{Disjunctive cuts: the MILP case}

    Disjunctive cuts in MILP date back to Balas' seminal work on disjunctive programming \cite{Balas1979} in the 70s, and became widely popular as their integration into branch-and-cut frameworks \cite{BalasEtAl1993_LiftProjectCutting,BalasEtAl1996_Mixed01} proved effective.
    Remarkably, disjunctive cuts, split cuts in particular, encompass several classes of cutting planes, e.g., Chvatal-Gomory, Gomory Mixed-Integer and Mixed-Integer Rounding cuts.
    
    A general approach for separating disjunctive cuts in MILP is the so-called Cut-Generating Linear Program (CGLP) proposed by Balas \cite{Balas1979,BalasEtAl1993_LiftProjectCutting}.
    The CGLP leverages a characterization of valid inequalities for disjunctive sets using Farkas multipliers, see Theorem 3.1 in \cite{Balas1979}.
    Thus, it is formulated in a higher-dimensional space, whose size is proportional to the number of disjunctive terms: for split cuts, which are two-term disjunctions, the CGLP is roughly double the size of the original problem.
    
    Computational aspects of the CGLP have been studied extensively, some of which we mention here.
    Given a fractional point $\bar{x}$ to separate, one can project the CGLP onto the support of $\bar{x}$, thereby reducing its size, and recover a valid cut by lifting \cite{BalasEtAl1993_LiftProjectCutting,BalasEtAl1996_Mixed01}.
    Split cuts obtained from solving the CGLP can be improved upon using monoidal strengthening \cite{BalasJeroslow1980_StrengtheningCutsMixed,BalasEtAl1996_Mixed01}.
    The normalization condition in the CGLP has been shown to have a major impact on the quality of the obtained cuts, and on overall performance \cite{Fischetti2011,Bonami2012_OptimizingOverLift,CadouxLemarechal2013_ReflectionsGeneratingdisjunctive,Serra2020_ReformulatingDisjunctiveCut,ConfortiWolsey2019_FacetSeparationOne}.
    In particular, Balas and Perregard \cite{Balas2003}, and later Bonami \cite{Bonami2012_OptimizingOverLift}, show that, in the case of split disjunctions, the CGLP can in fact be solved in the space of orignal variables only, yielding substantial computational gains.
    Recent developments include the efficient separation of cuts from multiple disjunctions \cite{Perregaard2001,Kazachkov2018_NonRecursiveCut}.

\subsection{Disjunctive cuts: the \MICONV\ case}

The work on disjunctive cutting planes for \MICONV\ (re)started already in the late 90s with two fundamental contributions \cite{Ceria1999,Stubbs1999}.
More precisely, Ceria and Soares \cite{Ceria1999} show that disjunctive convex problems can be formulated as a single convex problem in a higher dimensional space, and hint that this could serve to generate cutting planes using sub-gradient information at the optimum.
Around the same time, Stubbs and Mehrotra \cite{Stubbs1999} make the separation of disjunctive cuts for \MICONV\ explicit by \emph{(i)} solving one Non-Linear Programming (NLP) problem, and \emph{(ii)} identifying a sub-gradient that yields a violated cut.
The latter is done by taking a gradient (under regularity assumptions), or by solving a linear system (under the assumption that the objective function of the former problem is polyhedral).
Those assumptions and the use of perspective functions lead to differentiability issues that made the results of the computational investigation in \cite{Stubbs1999} numerically disappointing (according to the authors themselves). 

The numerical difficulties encountered in \cite{Stubbs1999} have slowed down the development of the area for a number of years --with the exception of \cite{Zhu2006}-- until the renewed interest and the practical approaches of the last decade \cite{Bonami2011,Kilinc2017}.
More precisely, Kilinc et al. \cite{Kilinc2017} note that 
\emph{``A simple strategy for generating lift-and-project cuts for a MINLP problem is to solve a CGLP
[...] based on a given polyhedral outer approximation of the relaxed feasible region [...].
The key question to be answered [...] is which points to use to define the polyhedral relaxation."} (from \cite{Kilinc2017}, Sec. 3). 

The distinction in how to answer the above question is the difference between \cite{Zhu2006}, \cite{Bonami2011}, and \cite{Kilinc2017}.
Namely, Zhu and Kuno \cite{Zhu2006} build an outer-approximation through the current fractional solution, and derive a cut by solving the associated CGLP.
However, this approach is not guaranteed to find a violated cut if one exists, see Example 1 in \cite{Kilinc2017}.
Bonami \cite{Bonami2011} solves one auxiliary NLP, and uses the solution to get an outer approximation that provably yields a violated cut if any exists.
Instead, Kilinc et al. \cite{Kilinc2017} iteratively refine an outer approximation by solving a sequence of LPs until a violated cut, if any, is separated by solving the associated CGLP.

In a recent paper, Kronqvist and Misener \cite{KronqvistMisener2020_DisjunctiveCutStrengthening} present a disjunctive-based cut-strengthening technique for \MICONV.
Given an initial valid inequality, and an ``exclusive" selection constraint (i.e., $x_{1} + ... + x_{k} = 1,~x \in \{0,1\}^k$), 
the procedure solves $k$ convex problems to tighten the cut's right-hand side and the coefficients of the $k$ binary variables.
This approach relates to the generalized disjunctive programming framework \cite{TrespalaciosGrossmann2014_ReviewMixedInteger}, for which cutting-plane algorithms based on \cite{Stubbs1999} have been proposed, see, e.g., \cite{TrespalaciosGrossmann2016_CuttingPlaneAlgorithm}.

The outer approximation approaches in \cite{Bonami2011,Kilinc2017} are, to the best of our knowledge, the state of the art for the implementation of disjunctive cuts for \MICONV\ and, especially, for MIQCQPs, see e.g., their implementation in CPLEX starting from version 12.6.2.
However, despite the impressive practical improvements with respect to the early attempts \cite{Stubbs1999}, questions were left on the table, which we answer in the present paper.

\subsection{Disjunctive cuts: the \MICONIC\ case}

    Following the support of MISOCP problems by major commercial solvers in the 2000s (MISOCP support appeared in CPLEX 9.0 and in Gurobi 5.0), the last decade has seen a flourishing literature on cuts for \MICONIC\ problems.
    
    A large share of these works focus on cuts for MISOCP, or, equivalently, for convex MIQCQP problems.
    Atamturk and Narayanan \cite{AtamturkNarayanan2010_ConicMixedInteger} introduce conic Mixed-Integer Rounding (MIR) cuts for MISOCP problems.
    Modaresi et al later show in \cite{Modaresi2015,Modaresi2016_ValidInequalitiesReformulation} that conic MIR cuts are in fact linear split cuts in an extended space, and compare the strength of families of conic MIR cuts to that of non-linear split cuts.
    In a related work, Andersen and Jensen \cite{Andersen2013} study intersection cuts in the MISOCP context, and obtain a closed-form formula for the conic quadratic intersection cut.
    Belotti et al. \cite{belotti2015conic} study  the intersection of a convex set and a two-term disjunction.
    They show that the convex hull is described by a single conic inequality, for which an explicit formula is derived in the conic quadratic case.
    In a similar fashion, two-term disjunctions on the second-order cone are investigated in \cite{KilincKarzanYildiz2015_TwoTermDisjunctions}, and this approach is later extended in \cite{YildizCornuejols2015_DisjunctiveCutsCross}.

    More general approaches, i.e., not restricted to convex quadratic constraints, include \cite{Cezik2005,AtamturkNarayanan2011_LiftingConicMixed,DadushEtAl2011_SplitClosureStrictly,KilincKarzan2015_MinimalValidInequalities,ModaresiEtAl2016_IntersectionCutsNonlinear}.
    In \cite{Cezik2005}, the authors study classes of cutting planes in the \MICONIC\ setting, including Chvatal-Gomory cuts and lift-and-project cuts, and report limited experiments on mixed 0-1 semi-definite programming instances.
    A generic lifting procedure for conic cuts is described in \cite{AtamturkNarayanan2011_LiftingConicMixed}.
    Dadush et al. \cite{DadushEtAl2011_SplitClosureStrictly} show that the split closure of a strictly convex body is defined by a finite number of disjunction, but is not necessarily polyhedral.
    Minimal valid inequalities are introduced in \cite{KilincKarzan2015_MinimalValidInequalities}, and are shown to be sufficient to describe the convex hull of a disjunctive conic set.
    The lack of tractable algebraic representation for minimal inequalities then leads the author to consider the broader class of sublinear inequalities, which are further studied in \cite{KilincKarzanSteffy2016_SublinearInequalitiesMixed}.
    Finally, intersection cuts for non-polyhedral sets and certain classes of disjunctions are studied in \cite{ModaresiEtAl2016_IntersectionCutsNonlinear}.

    Nevertheless, for the most part, these works remain theoretical contributions.
    Indeed, with the exception of \cite{Cezik2005,AtamturkNarayanan2010_ConicMixedInteger,Modaresi2016_ValidInequalitiesReformulation}, no computational results were reported for any of these techniques and, to the best of our knowledge, none has been implemented in optimization solvers.
    In fact, neither Mosek nor Gurobi\footnote{Personal communication with Gurobi and Mosek developers.} generate cuts from non-linear information.

\subsection{Contribution and outline}
    
In this paper, we study linear disjunctive cutting planes for \MICONIC\ problems.
Our objective is to derive practical and numerically robust tools for the separation of those cuts, and we show that conic formulations allow us to achieve it.
Specifically, we do so by extending Balas' CGLP into a Cut-Generating Conic Program (CGCP) (see also \cite{Cezik2005}).
Our contributions are:
\begin{enumerate}
    \item We study the role of the normalization condition in the CGCP, and propose conic normalizations that guarantee strong duality.
    In doing so, we answer some concerns that were raised in previous works.
    Namely,
    \begin{itemize}
        \item With respect to \cite{Bonami2011,Cezik2005,KronqvistMisener2020_DisjunctiveCutStrengthening}, we can select the right normalization to overcome issues associated with potential lack of constraint qualification.
        \item With respect to \cite{Kilinc2017,KronqvistMisener2020_DisjunctiveCutStrengthening}, since we use conic formulations, we do not need \emph{(i)} to pay attention at avoiding generating linearization cuts at points outside the domain where the non-linear functions are known to be convex, \emph{(ii)} to deal with non-differentiable functions, and \emph{(iii)} boundedness assumptions on the value of the constraints and their gradients.
    \end{itemize}
    \item We draw attention to some limitations of separating conic-infeasible points in an outer-approximation context, and propose algorithmic strategies to alleviate them.
    \item We introduce conic extensions of the lifting procedure for disjunctive cuts, and of monoidal strengthening for split cuts. 
    \item We provide computational results on the effectiveness of the proposed approach, thereby showing the benefits of the conic representation, and compare the practical effectiveness of several normalization conditions.
    \item We make our implementation available\footnote{Our code is released at \url{https://github.com/mtanneau/CLaP}} under an open-source license.
\end{enumerate}

The remainder of the paper is structured as follows. 
In Section \ref{sec:background}, we introduce some required notation and background material on conic optimization, and state a number of theoretical results on the characterization of valid inequalities for conic and disjunctive conic sets.
Section \ref{sec:separation} formalizes the CGCP and its dual, and the theoretical properties of several normalization conditions for the CGCP are discussed in Section \ref{sec:normalization}.
The separation of conic-infeasible points is further investigated in Section \ref{sec:infeas}, while classical lifting and strengthening techniques from MILP are extended to the conic setting in Section \ref{sec:LiftStrengthen}.
In Section \ref{sec:results}, we analyze the practical behavior of different normalizations, and show that our approach is competitive with CPLEX internal lift-and-project cuts.
Some concluding remarks are presented in Section \ref{sec:conclusion}.

\section{Background}
\label{sec:background}

    In this section, we introduce some notations, and recall a number of results that are needed for our approach.
    We refer to \cite{Rockafellar1970} for a thorough overview of convex analysis, and to \cite{Ben-Tal2001} and \cite{Balas1979} for results on conic optimization and disjunctive programming, respectively.

    For $\X \subseteq \R^{n}$, we denote by $\intr(\X)$, $\partial(\X)$, $\cl (\X)$, and $\conv(\X)$ the \emph{interior}, \emph{boundary}, \emph{closure}, and \emph{convex hull} of $\X$, respectively.
    The Minkowski sum of $\X, \Y \subseteq \R^{n}$ is defined by
    \begin{align*}
        \X + \Y = \left\{ x + y \ \middle| \ x \in \X, y \in \Y \right\}.
    \end{align*}
    
    If $\| \cdot \|$ is a norm on $\R^{n}$, its \emph{dual norm} $\| \cdot \|_{*}$ is defined by
    \begin{align}
        \forall y \in \R^{n}, \ \| y \|_{*} = \sup \left\{ y^{T}x \ \middle| \ \|x \| \leq 1 \right\}.
    \end{align}
    In all that follows, $\|\cdot\|_{2}$ denotes the Euclidean norm on $\R^{n}$.
    Finally, we denote by $e$ a vector of all ones, and by $e_{j}$ a vector whose $j^{th}$ coordinate is $1$ and all others are $0$; the dimension of $e$ and $e_{j}$ is always obvious from context.
    
\subsection{Cones and conic duality}
    
    The set $\K \subseteq \R^{n}$ is a \emph{cone} if $\forall (x, \lambda) \in \K \times \R_{+}, \ \lambda x \in \K$, and it is \emph{irreducible} if it cannot be written as a cartesian product of irreducible cones.
    The \emph{dual cone} of $\K \subseteq \R^{n}$ is
    \begin{align}
        \label{eq:def:dual_cone}
        \K^{*} &= \left\{ 
            u \in \R^{n} 
        \ \middle| \
            u^{T}x \geq 0, \forall x \in \K  
        \right\},
    \end{align}
    and $\K$ is \emph{self-dual} if $\K = \K^{*}$.
    A cone $\K \subseteq \R^{n}$ is \emph{pointed} if $\K \cap (-\K) = \{0\}$, i.e., if it does not contain a line that passes through the origin.
    \emph{Proper} cones are closed, convex, pointed cones with non-empty interior.
    If $\K$ is a proper cone, then $\K^{*}$ is also a proper cone, and any $\rho \in \intr \K$ induces a norm on $\K^{*}$, denoted by $\knorm{\rho}{\cdot}$ and defined by
    \begin{align}
    \label{eq:conicNorm}
        \knorm{\rho}{u} &= \rho^{T}u, \ \ \ \forall u \in \K^{*}.
    \end{align}
    
    Examples of proper cones include the non-negative orthant
    \begin{align*}
        \R_{+}^{n} = \left\{ x \in \R^{n} \ \middle| \ x \geq 0 \right\},
    \end{align*}
    the second-order cone (SOC)
    \begin{align*}
        \mathcal{L}_{n} = \left\{ x \in \R^{n} \ \middle| \ x_{1} \geq \sqrt{x_{2}^{2} + ... + x_{n}^{2}} \right\},
    \end{align*}
    the positive semi-definite (PSD) cone
    \begin{align*}
        \mathbb{S}_{+}^{n} = \left\{ X \in \R^{n \times n} \ \middle| \ X = X^{T}, \lambda_{min}(X) \geq 0 \right\},
    \end{align*}
    where $\lambda_{min}(X)$ is the smallest eigenvalue of $X$, and the exponential cone
    \begin{align*}
        \mathcal{E} = \cl \left\{ (x, y, z) \in \R^{3} \ \middle| \ x \exp (\nicefrac{x}{y}) \leq z, y > 0 \right\}.
    \end{align*}
    The non-negative orthant, SOC and SDP cone are also self-dual, while the exponential cone is not.
    
    A proper cone $\K$ induces a partial (resp. strict partial) ordering on $\R^{n}$, denoted $\Kgeq{\K}$ (resp. $\Kgt{\K}$) and defined by
    \begin{align}
        \label{eq:def:conic_inequality}
        \forall (x, y) \in \R^{n} \times \R^{n}, \ x \Kgeq{\K} y & \Leftrightarrow x - y \in \K,\\
        \label{eq:def:conic_inequality_strict}
        \forall (x, y) \in \R^{n} \times \R^{n}, \ x \Kgt{\K} y  & \Leftrightarrow x - y \in \intr (\K).
    \end{align}
    In all that follows, we refer to $A x \Kgeq{\K} b$ (resp. $Ax \Kgt{\K} b$) as a \emph{conic} (resp. \emph{strict conic}) inequality.
    Consider the system
    \begin{align}
        \label{eq:conic_system}
        A x \Kgeq{\K} b,
    \end{align}
    where $A \in \R^{m \times n}$ and $\K = \K_{1} \times ... \times \K_{N}$ with $\K_{i} \subset \R^{m_{i}}$; correspondingly, for $y \in \R^{m}$, we write $y = (y_{1}, ..., y_{N})$.
    We follow the terminology of \cite{Friberg2016_FacialReductionHeuristics}, and say that system \eqref{eq:conic_system} is \emph{feasible} if there exists $x \in \R^{n}$ such that $A x - b \in \K$, and \emph{strongly feasible} if there exists $x \in \R^{n}$ such that $A x - b \in \K$ and $(Ax - b)_{i} \in \intr(\K_{i})$ for all non-polyhedral cones $\K_{i}$, i.e., such that all non-polyhedral conic inequalities are strictly satisfied.
    Similarly, system \eqref{eq:conic_system} is \emph{infeasible} if it does not admit any feasible solution,
    and \emph{strongly infeasible} if, in addition, there exists $y \in \K^{*}$ such that $A^{T}y=0$ and $b^{T}y > 0$.
    Furthermore, we say that system \eqref{eq:conic_system} is
    \emph{weakly feasible} if it is feasible but not strongly feasible,
    and \emph{weakly infeasible} if it is infeasible but not strongly infeasible.
    Finally, a system is \emph{well-posed} if it is either strongly feasible or strongly infeasible, and \emph{ill-posed} otherwise.
    
    Let us emphasize that well-posedness is an algebraic property, i.e., it is not associated to a geometric set but to its algebraic representation through conic inequalities.
    For instance, for $n \geq 3$, both $0 \Kgeq{\R^{n}_{+}} x \Kgeq{\R^{n}_{+}} 0$ and $0 \Kgeq{\mathcal{L}_{n}} x \Kgeq{\mathcal{L}_{n}} 0$ describe the same set $\{ 0\}$, however, the former is well-posed and the latter is not.
    Nevertheless, for brevity, we will refer to the well-posedness of a set $\X$, only if there no ambiguity in its description with conic inequalities.
    
    A conic optimization problem writes, in standard form,
    \begin{subequations}
    \label{eq:ConicP}
    \begin{align}
        (P) \ \ \ \min_{x} \ \ \ & c^{T}x\\
        s.t. \ \ \ 
        & Ax = b,\\
        & x \in \K,
    \end{align}
    \end{subequations}
    where $A \in \R^{m \times n}$, $b \in \R^{m}$, and $\K$ is a cone.
    The strong/weak (in)feasibility and well-posedness of (P) refers to that of the system $(Ax = b, x \in \K)$.
    The optimal value of (P) is $\opt(P) = \inf \left\{ c^{T}x \ \middle| \ Ax =b, x \in \K \right\}$, and we say that (P) is \emph{bounded} if  $\opt(P) \in \R$ and \emph{solvable} if, in addition, there exists a feasible solution $x^{*}$ such that $c^{T}x^{*} = \opt(P)$.
    The dual of (P) is
    \begin{subequations}
    \label{eq:ConicD}
    \begin{align}
        (D) \ \ \ \max_{y, s} \ \ \ & b^{T}y\\
        s.t. \ \ \ 
        & A^{T}y + s = c,\\
        & s \in \K^{*},
    \end{align}
    \end{subequations}
    and $\opt(D) = \sup \left\{ b^{T}y \ \middle| \ A^{T}y + s = c, s \in \K^{*} \right\}$.
    In particular, (D) is also a conic optimization problem.
    
    \begin{theorem}[Conic duality theorem]
    \label{thm:conic duality}
        \begin{enumerate}
            \item \, [Weak duality] $\opt(D) \leq \opt(P)$.
            \item \, [Strong duality] If (P) (resp. (D)) is strongly feasible and bounded, then (D) (resp. (P)) is solvable and $\opt(P) = \opt(D)$.\\
            If both (P) and (D) are strongly feasible, then both are solvable with same optimal value.
        \end{enumerate}
    \end{theorem}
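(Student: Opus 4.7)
The plan is to prove weak duality by a direct algebraic computation, then obtain strong duality from a separating-hyperplane argument in which the strong-feasibility hypothesis is used exactly to rule out a degenerate separator. For weak duality, taking $x$ feasible for $(P)$ and $(y,s)$ feasible for $(D)$ and using $A^T y + s = c$, one obtains
\[
c^T x - b^T y = (A^T y + s)^T x - b^T y = y^T(Ax - b) + s^T x = s^T x \geq 0,
\]
where the last step invokes $x \in \K$, $s \in \K^*$, and the defining property of the dual cone in \eqref{eq:def:dual_cone}; taking the infimum over $x$ and supremum over $(y,s)$ then yields $\opt(D) \leq \opt(P)$.

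For strong duality, assume $(P)$ is strongly feasible with finite optimum $p^* = \opt(P)$. I would introduce the convex sets
\[
V = \bigl\{(Ax - b,\, c^T x) : x \in \K\bigr\} \subseteq \R^{m+1}, \qquad W = \{0\} \times (-\infty, p^*),
\]
which are disjoint by the definition of $p^*$. A proper separating hyperplane produces $(y, \tau) \neq 0$ and $\beta \in \R$ with $y^T(Ax - b) + \tau c^T x \geq \beta$ for every $x \in \K$ and $\tau t \leq \beta$ for every $t < p^*$. The second inequality forces $\tau \geq 0$ and $\tau p^* \leq \beta$; scaling $x$ along rays of $\K$ and testing at $x = 0$ gives $A^T y + \tau c \in \K^*$ and $\beta \leq -y^T b$; evaluating at any primal feasible $x$ also yields $\beta \leq \tau p^*$, so $\beta = \tau p^*$ and $y^T b + \tau p^* \leq 0$.

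The main obstacle is ruling out the degenerate case $\tau = 0$, which is precisely where the strong-feasibility hypothesis bites. If $\tau = 0$ then $s := A^T y \in \K^*$ and $y^T b \leq 0$; evaluating at a strongly feasible witness $\bar x$ gives $s^T \bar x = y^T b \leq 0$, while $s \in \K^*$ and $\bar x \in \K$ also yield $s^T \bar x \geq 0$, forcing equality. Because $\bar x$ lies in the interior of every non-polyhedral block of $\K$, complementary slackness with $s \in \K^*$ forces the non-polyhedral blocks of $s$ to vanish, and the residual polyhedral case is closed out by the properness of the separation together with Farkas' lemma, producing a contradiction. Hence $\tau > 0$; rescaling $\tau = 1$ and setting $\tilde y = -y$, $s := c - A^T \tilde y \in \K^*$, the pair $(\tilde y, s)$ is dual feasible with $b^T \tilde y \geq p^*$, and weak duality forces equality, so $(D)$ is solvable with $\opt(D) = p^*$.

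Finally, the symmetric case (assuming $(D)$ is strongly feasible and bounded) follows by applying the same argument to $(D)$, which is itself a conic problem in standard form after routine reformulation; the ``both strongly feasible'' claim then reduces to the observation that any $(D)$-feasible solution supplies, via weak duality, a finite lower bound for $\opt(P)$, so $(P)$ is bounded, and applying the half proved above (together with its symmetric counterpart) yields solvability of both $(P)$ and $(D)$ with equal optimal values.
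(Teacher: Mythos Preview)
The paper does not prove this theorem; it simply cites Theorem~1.4.4 in \cite{Ben-Tal2001}. Your weak-duality computation is correct and standard, and your separating-hyperplane strategy for strong duality is the classical one: the derivations of $A^{T}y + \tau c \in \K^{*}$, $\beta = \tau p^{*}$, $y^{T}b + \tau p^{*} \leq 0$, and the construction of a dual optimum once $\tau > 0$, are all sound.

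The genuine gap is the exclusion of $\tau = 0$. You invoke ``properness of the separation'' and ``Farkas' lemma'' to dispatch the residual polyhedral case, but neither is justified. First, you never establish that a \emph{proper} separating hyperplane exists: with $V$ and $W$ as you define them, $W$ is a half-line with empty interior, and if $A$ lacks full row rank there is $y \neq 0$ with $A^{T}y = 0$, whence feasibility of $(P)$ forces $y^{T}b = 0$ automatically, and the hyperplane $\{(u,t): y^{T}u = 0\}$ contains both $V$ and $W$ entirely. Second, the paper's notion of \emph{strongly feasible} requires interior membership only for the non-polyhedral blocks of $\K$; this is strictly weaker than Slater's condition. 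Your complementary-slackness step therefore only kills the non-polyhedral components of $s = A^{T}y$, and the surviving polyhedral components need a genuine reduction to LP duality on those blocks, which a one-line appeal to Farkas does not supply. These two points---securing proper separation and handling the weakened Slater hypothesis on polyhedral blocks---are precisely where the real work lies, and in the cited reference they are treated with care.
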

    \begin{proof}
        See Theorem 1.4.4 in \cite{Ben-Tal2001}.
        \qed
    \end{proof}
    
    Conic duality extends the classical duality for linear programming, albeit with a number of edge cases that lead to practical difficulties.
    For instance, there may exist a positive duality gap even though both (P) and (D) are solvable, as illustrated in Example \ref{ex:DualityGap}.
    
    \begin{example}[Example 8.6, \cite{Mosek2020_ModelingCookbook}]
    \label{ex:DualityGap}
        Consider the primal-dual pair
        \begin{align*}
            (P) \ \ \ \min_{x_{1}, x_{2}, x_{3}} \ \ \ 
            & x_{3}
                & (D) \ \ \ \max_{y_{1}, y_{2}} \ \ \ 
                & -y_{2} \\
            s.t. \ \ \ 
            & x_{2} \geq x_{1},
                & s.t. \ \ \ 
                & (y_{1}, -y_{1}, 1-y_{2}) \in \mathcal{L}_{3},\\
            & x_{3} \geq -1,
                && y_{1}, y_{2} \geq 0.\\
            & (x_{1}, x_{2}, x_{3}) \in \mathcal{L}_{3},
        \end{align*}
        Primal-feasible solutions are of the form $(x_{1}, x_{1}, 0)$, while dual-feasible solutions are of the form $(y_{1}, 1)$.
        Thus, $\opt(P) = 0$ and $\opt(D) = -1 < \opt(P)$.
    \end{example}
    
\subsection{Valid inequalities}
    
    For $(\alpha, \beta) \in \R^{n} \times \R$, we say that $\alpha^{T}x \geq \beta$ is a \emph{valid inequality} for $\X \subseteq \R^{n}$ if
    \begin{align*}
        \X \subseteq \left\{
            x \in \R^{n} 
        \ \middle| \ 
            \alpha^{T}x \geq \beta
        \right\},
    \end{align*}
    and a \emph{supporting hyperplane} if, in addition, $\exists \, \tilde{x} \in \cl \conv \X: \alpha^{T} \tilde{x} = \beta$.
    The set of valid inequalities for $\X$ is denoted by $\polar{\X}$, i.e.,
    \begin{align}
        \label{eq:def:polar}
        \polar{\X} =
        \left\{
            (\alpha, \beta) \in \R^{n} {\times} \R
        \ \middle| \ 
            \forall x \in \X, \ \alpha^{T}x \geq \beta
        \right\}.
    \end{align}
    Note that $\polar{\X}$ is a closed, convex set, and that
    \begin{align}
        \left\{ x \in \R^{n}
        \ \middle|\ 
            \forall (\alpha, \beta) \in \polar{\X}, \ \alpha^{T}x \geq \beta
        \right\}
        = \cl \conv(\X).
    \end{align}
    Furthermore, for $\X, \Y \subseteq \R^{n}$, we have
    \begin{align*}
        \X \subseteq \Y & \Rightarrow \polar{\Y} \subseteq \polar{\X},\\
        \polar{(\X \cup \Y)} &= \polar{\X} \cap \polar{\Y}.
    \end{align*}
    
    We now focus on the case where $\X$ is described by conic inequalities, and seek an algebraic description of $\polar{\X}$ using a finite number of conic inequalities.
    Note that, although $\polar \X$ is described by an infinite number of linear inequalities, as per Equation \eqref{eq:def:polar}, this semi-infinite representation is not computationally tractable.
    
    \begin{theorem}[Conic theorem on alternatives]
    \label{thm:alternatives}
        Consider the conic system
        \begin{align}
        \label{eq:thm:alternatives:conic system}
            Ax \Kgeq{\K} b,
        \end{align}
        where $A \in \R^{m \times n}$ has full column rank and $\K$ is a proper cone.
        \begin{enumerate}
            \item \label{thm:alternatives:farkas}
            If there exists $y \in \R^{m}$ such that
            \begin{align}
            \label{eq:thm:alternatives:dual}
                A^{T}y = 0, b^{T}y > 0, y \in \K^{*},
            \end{align}
            then \eqref{eq:thm:alternatives:conic system} has no solution.
            
            \item \label{thm:alternatives:almost solvable}
            If \eqref{eq:thm:alternatives:dual} has no solution, then \eqref{eq:thm:alternatives:conic system} is almost solvable, i.e., for any $\epsilon > 0$, there exists $\tilde{b} \in \R^{m}$ such that $\| b - \tilde{b} \|_{2} < \epsilon$ and the system $Ax \Kgeq{\K} \tilde{b}$ is solvable.
            
            \item \label{thm:alternatives:alternative}
            \eqref{eq:thm:alternatives:dual} is solvable if and only if \eqref{eq:thm:alternatives:conic system} is not almost solvable.
        \end{enumerate}
    \end{theorem}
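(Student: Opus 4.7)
Part~\ref{thm:alternatives:farkas} is a direct chaining of the conic inequalities and requires no machinery. The substance of the theorem is Part~\ref{thm:alternatives:almost solvable}, which I would prove by contrapositive via a closure/separation argument applied to the convex set of ``solvable right-hand sides.'' Part~\ref{thm:alternatives:alternative} then falls out by sharpening the argument of Part~\ref{thm:alternatives:farkas} into a perturbation statement and combining with Part~\ref{thm:alternatives:almost solvable}.

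\textbf{Part~\ref{thm:alternatives:farkas} (weak alternative).} Assume both systems are solvable, with $x$ feasible for \eqref{eq:thm:alternatives:conic system} and $y$ feasible for \eqref{eq:thm:alternatives:dual}. Since $Ax - b \in \K$ and $y \in \K^*$, the definition \eqref{eq:def:dual_cone} of the dual cone gives $y^T(Ax-b) \ge 0$. Using $A^T y = 0$ this rewrites as $-b^T y \ge 0$, contradicting $b^T y > 0$. So the two systems cannot both be solvable.

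\textbf{Part~\ref{thm:alternatives:almost solvable} (closure argument).} The key observation is that ``almost solvable'' is exactly the statement $b \in \cl(\mathcal{S})$, where
\begin{align*}
    \mathcal{S} = A\R^{n} - \K = \{ Ax - z \ | \ x \in \R^{n}, z \in \K \}.
\end{align*}
The set $\mathcal{S}$ is convex as the Minkowski sum of two convex sets, hence $\cl(\mathcal{S})$ is closed and convex. I argue the contrapositive: suppose \eqref{eq:thm:alternatives:conic system} is not almost solvable, so $b \notin \cl(\mathcal{S})$. By the Hahn--Banach separation theorem applied to the point $b$ and the closed convex set $\cl(\mathcal{S})$, there exists $y \in \R^{m}\setminus\{0\}$ and $\gamma \in \R$ with
\begin{align*}
    y^T v \le \gamma < y^T b \quad \text{for all } v \in \cl(\mathcal{S}).
\end{align*}
Since $0 = A \cdot 0 - 0 \in \mathcal{S}$, we get $\gamma \ge 0$, hence $b^T y > 0$. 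Because $\mathcal{S}$ is stable under the action $v \mapsto v + A\R^n$ (take any $x' \in \R^n$ and observe $v + Ax' \in \mathcal{S}$), linearity forces $y^T A x' \le \gamma - y^T v$ for arbitrary $x'$, which is only possible if $A^T y = 0$. Similarly, $\mathcal{S}$ is closed under subtraction of arbitrary $\lambda z$ for $z \in \K$, $\lambda \ge 0$, which (letting $\lambda \to \infty$) forces $y^T z \ge 0$ for every $z \in \K$, i.e.\ $y \in \K^*$. Thus $y$ satisfies \eqref{eq:thm:alternatives:dual}.

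\textbf{Part~\ref{thm:alternatives:alternative}.} The ``if'' direction is Part~\ref{thm:alternatives:almost solvable}. For the ``only if'' direction, I strengthen Part~\ref{thm:alternatives:farkas}: if $y$ solves \eqref{eq:thm:alternatives:dual}, then for any perturbed right-hand side $\tilde b$, the same chain gives $y^T \tilde b \le 0$ whenever $Ax \Kgeq{\K} \tilde b$ is solvable. But $|y^T b - y^T \tilde b| \le \|y\|_2 \|b-\tilde b\|_2$, so choosing $\epsilon < b^T y / \|y\|_2$ yields $y^T \tilde b > 0$ for every $\tilde b$ with $\|b-\tilde b\|_2 < \epsilon$, contradicting solvability. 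Hence \eqref{eq:thm:alternatives:conic system} is not almost solvable. Combining both directions proves~\ref{thm:alternatives:alternative}.

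\textbf{Expected obstacle.} The delicate step is Part~\ref{thm:alternatives:almost solvable}: one has to separate $b$ from a set that need not be closed a priori (so the separation is performed against $\cl(\mathcal{S})$), and then translate the separating inequality into the two algebraic conditions $A^T y = 0$ and $y \in \K^*$ by exploiting the recession directions of $\mathcal{S}$ (the subspace $A\R^n$ and the cone $-\K$). The full-column-rank assumption on $A$ is harmless here but ensures that the dual multiplier $y$ is uniquely tied to the separating direction without degenerate degrees of freedom.
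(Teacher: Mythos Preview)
Your proof is correct. The paper does not give its own argument here; it simply cites Proposition~1.4.2 of Ben-Tal and Nemirovski~\cite{Ben-Tal2001}, and your separation argument (identifying ``almost solvable'' with $b \in \cl(A\R^{n} - \K)$ and reading off $A^{T}y = 0$, $y \in \K^{*}$ from the recession directions of that set) is precisely the standard proof found in that reference. Your side remark that the full-column-rank hypothesis on $A$ is not actually used anywhere in the argument is also correct; it plays no role in this theorem and is carried by the paper for use elsewhere.
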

    \begin{proof}
        See Proposition 1.4.2 in \cite{Ben-Tal2001}.
        \qed
    \end{proof}
    
    \begin{theorem}[Valid inequalities]
    \label{thm:background:ValidInequalities}
        Let $\C = \{x \ | \ Ax \Kgeq{\K} b \}$ with $A$ of full column rank, and define
        \begin{align*}
            \F &= \left\{
                (\alpha, \beta) 
            \ \middle| \
                \exists u \in \K^{*}: (\alpha = A^{T}u, \beta \leq b^{T} u)
            \right\}.
        \end{align*}
        Then, $\cl \F \subseteq \polar{\C}$ and, in addition,
        \begin{enumerate}
            \item if $\C \neq \emptyset$, then $\cl \F = \polar{\C}$;
            \item if $\C$ is well-posed, then $\F = \polar{\C}$.
        \end{enumerate}
    \end{theorem}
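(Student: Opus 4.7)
The plan is to first verify the inclusion $\F \subseteq \polar{\C}$ directly, then establish Part~1 by a Hahn--Banach separation argument and Part~2 by invoking conic strong duality (Theorem~\ref{thm:conic duality}).

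For the universal inclusion, pick $(\alpha, \beta) \in \F$ with certificate $u \in \K^{*}$ and any $x \in \C$. The conic inequalities $Ax - b \in \K$ and $u \in \K^{*}$ give $u^{T}(Ax - b) \geq 0$, hence $\alpha^{T} x = (A^{T} u)^{T} x \geq b^{T} u \geq \beta$, so $(\alpha, \beta) \in \polar{\C}$. Since $\polar{\C}$ is closed, this extends to $\cl \F \subseteq \polar{\C}$.

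For Part~1, suppose toward a contradiction that some $(\alpha^{*}, \beta^{*}) \in \polar{\C} \setminus \cl \F$. Strict separation yields $(c, d) \in \R^{n} \times \R$ and $\gamma \in \R$ with $c^{T} \alpha^{*} + d \beta^{*} < \gamma \leq c^{T} \alpha + d \beta$ for all $(\alpha, \beta) \in \F$. Because $\F$ contains $(\alpha, \beta - t)$ for any $t \geq 0$ whenever it contains $(\alpha, \beta)$, letting $t \to \infty$ forces $d \leq 0$, and taking $u = 0$ in the separator forces $\gamma \leq 0$. If $d = 0$, substituting $\alpha = A^{T} u$ and scaling $u \in \K^{*}$ forces $(Ac)^{T} u \geq 0$ for every $u \in \K^{*}$, so $Ac \in \K^{**} = \K$; picking any $x_{0} \in \C$, the ray $x_{0} + tc$ stays in $\C$ for all $t \geq 0$, and validity of $(\alpha^{*}, \beta^{*})$ along this ray forces $c^{T} \alpha^{*} \geq 0$, contradicting $c^{T} \alpha^{*} < \gamma \leq 0$. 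If $d < 0$, renormalize to $d = -1$ and substitute $\beta = b^{T} u$ to obtain $(Ac - b)^{T} u \geq \gamma$ for all $u \in \K^{*}$; hence $Ac - b \in \K$, i.e., $c \in \C$, and validity at $c$ gives $c^{T} \alpha^{*} \geq \beta^{*}$, contradicting $c^{T} \alpha^{*} - \beta^{*} < \gamma \leq 0$.

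For Part~2 in the strongly feasible case, I would apply Theorem~\ref{thm:conic duality} to the conic program $\min \{\alpha^{T} x : Ax \Kgeq{\K} b\}$: it is strongly feasible by assumption and bounded below by $\beta$ since $(\alpha, \beta) \in \polar{\C}$, so its dual $\max \{b^{T} u : A^{T} u = \alpha, \, u \in \K^{*}\}$ is solvable with matching value at least $\beta$, and the dual optimizer directly certifies $(\alpha, \beta) \in \F$ without recourse to closure. The strongly infeasible case ($\C = \emptyset$, so $\polar{\C}$ fills the ambient space) is handled by using the strong-infeasibility certificate $y_{0} \in \K^{*}$ with $A^{T} y_{0} = 0$ and $b^{T} y_{0} > 0$ to shift any dual-feasible $u$ along $y_{0}$ and attain arbitrarily large $b^{T} u$. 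The main obstacle is the case analysis in Part~1, where one must extract from the separator either a recession direction of $\C$ (case $d = 0$) or a full feasible point of $\C$ (case $d < 0$), each time leveraging $\K^{**} = \K$; the case $d = 0$ is especially delicate, since it is precisely the regime in which the gap between $\F$ and $\cl \F$ in Part~2's strong-duality-based argument becomes visible and requires the extra hypothesis of strong feasibility to close.
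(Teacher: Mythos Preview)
Your argument is correct. For Part~2 in the strongly feasible case you do exactly what the paper does: one application of conic strong duality (Theorem~\ref{thm:conic duality}) to $\min\{\alpha^{T}x : Ax \Kgeq{\K} b\}$. The genuine difference is in Part~1. The paper applies the Conic Theorem on Alternatives (Theorem~\ref{thm:alternatives}) to the pair
\[
\bigl(A^{T}u=\alpha,\ b^{T}u\geq\beta,\ u\in\K^{*}\bigr)
\quad\text{versus}\quad
\bigl(Ax\Kgeq{\K} tb,\ \alpha^{T}x<t\beta,\ t\geq 0\bigr),
\]
rules out solvability of the second system by a case split on $t>0$ versus $t=0$, and concludes that the first is \emph{almost solvable}, which directly furnishes a sequence $(\alpha_{\epsilon},\beta_{\epsilon})\in\F$ converging to $(\alpha,\beta)$. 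Your Hahn--Banach route reproduces the same two cases ($d<0$ recovers a point of $\C$, $d=0$ recovers a recession direction of $\C$) but argues by contradiction and never invokes Theorem~\ref{thm:alternatives}; it is more self-contained, makes the use of $\K^{**}=\K$ explicit, and trades the paper's constructive approximating sequence for a cleaner indirect argument. The paper's version is shorter only because Theorem~\ref{thm:alternatives} has already absorbed the separation step.

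One caveat on your strongly-infeasible branch of Part~2: shifting ``any dual-feasible $u$'' along $y_{0}$ tacitly assumes that some $u\in\K^{*}$ with $A^{T}u=\alpha$ exists for the given $\alpha$, and this does not follow from strong infeasibility alone. For instance, with $\K=\mathcal{L}_{2}$, $A=(1,1)^{T}$, $b=(1,0)^{T}$, the system is strongly infeasible via $y_{0}=(1,-1)$, yet $u_{1}+u_{2}=\alpha$ has no solution in $\mathcal{L}_{2}$ when $\alpha<0$. The paper's proof does not spell out this sub-case either, simply citing ``conic strong duality,'' so you are not being held to a higher standard; just be aware the step is less routine than it appears.
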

    \begin{proof}
        The inclusion $\F \subseteq \polar{C}$ is immediate from the definition of $\K^{*}$, and it follows that $\cl(\F) \subseteq \cl(\polar{C}) = \polar{C}$.
        Case \textbf{2} is a direct consequence of conic strong duality.
        
        We now prove \textbf{1}. Assume $\C \neq \emptyset$, let $(\alpha, \beta) \in \polar{\C}$, and consider the systems
        \begin{align}
            \label{eq:thm:I}
            A^{T}u = \alpha,& \ b^{T}u \geq \beta, \ u \in \K^{*};\\
            \label{eq:thm:II}
            Ax \Kgeq{\K} tb,& \ \alpha^{T}x < t \beta, \ t \geq 0.
        \end{align}
        By Theorem \ref{thm:alternatives}, either \eqref{eq:thm:I} is almost solvable, or \eqref{eq:thm:II} is solvable.
        Let us prove that the latter does not hold.
        
        Let $(x, t)$ be a solution to \eqref{eq:thm:II}.
        On the one hand, if $t > 0$, letting $\bar{x} = t^{-1}x$, we have $A \bar{x} \Kgeq{\K} b$, i.e., $\bar{x} \in \C$, but $\alpha^{T} \bar{x} < \beta$, which contradicts $(\alpha, \beta) \in \polar{\C}$.
        On the other hand, if $t = 0$, then we have $Ax \Kgeq{\K} 0$ and $\alpha^{T}x < 0$.
        Thus, for $x_{0} \in \C$ and $\tau \geq 0$, we have
        \begin{align*}
            A (x_{0} + \tau x) \Kgeq{\K} b,
        \end{align*}
        i.e., $(x_{0} + \tau x) \in \C$.
        Furthermore, we have $\alpha^{T}(x_{0} + \tau x) < \beta$ for large enough $\tau$, which also contradicts $(\alpha, \beta) \in \polar{\C}$.
        Therefore, \eqref{eq:thm:II} is not solvable and \eqref{eq:thm:I} is almost solvable.
        
        Thus, for any $\epsilon > 0$, there exists $\alpha_{\epsilon}$, $\beta_{\epsilon}$ and $u_{\epsilon} \in \K^{*}$ such that
        and 
        \begin{align*}
            \| \alpha - \alpha_{\epsilon} \|_{2} \leq \epsilon,
            \ \| \beta - \beta_{\epsilon} \|_{2} \leq \epsilon,
        \end{align*}
        and 
        \begin{align*}
            \alpha_{\epsilon} = A^{T} u_{\epsilon},
            \ \beta_{\epsilon} \leq b^{T}u_{\epsilon},
        \end{align*}
        i.e., $(\alpha_{\epsilon}, \beta_{\epsilon}) \in \F$.
        Taking $\epsilon \rightarrow 0$, we obtain that $(\alpha, \beta) \in \cl \F$.
        \qed
    \end{proof}
    
    We will refer to the multiplier $u \in \K^{*}$ in Theorem \ref{thm:background:ValidInequalities} as a  (conic) \emph{Farkas multiplier}, and we say that $\alpha^{T}x \geq \beta$ is obtained by Farkas aggregation if $\alpha = A^{T}u$ and $\beta \leq b^{T}u$ for some $u \in \K^{*}$.
    
    Theorem \ref{thm:background:ValidInequalities} highlights a fundamental difference between the linear and non-linear settings.
    In the linear case, $\mathcal{F}$ is polyhedral, thus, it is always closed and, if $\C$ is non-empty, then all valid inequalities for $\C$ can be obtained by Farkas aggregation.
    In the conic setting, however, this property may no longer hold, i.e., there may exist valid inequalities that cannot be represented through Farkas aggregation.
    
    \begin{example}
    \label{ex:background:illPosed}
        
        Let $\C = \{ x \ | \ Ax \Kgeq{\K} 0 \}$ where
        \begin{align*}
            A = \begin{pmatrix} 1 & 0 \\ 0 & 1 \\ 1 & 0 \end{pmatrix}, \ 
            \K = \mathcal{L}_{3},
        \end{align*}
        i.e., $\C = \left\{ (x, y) \in \R^{2}
            \ \middle| \ 
                (x, y, x) \in \mathcal{L}_{3}
            \right\}$.
        While $A$ has full column rank and $\K$ is proper, the system $Ax \Kgeq{\K} 0$ is not well-posed.
        It then is easy to verify that $\C = \left\{(x, 0) \ \middle| \ x \geq 0 \right\}$, and that $y \geq 0$ is a valid inequality for $\C$.
        
        However, for any $u \in \K^{*} = \mathcal{L}_{3}$, we have
        \begin{align*}
            A^{T}u & = \begin{pmatrix} u_{1} + u_{3} \\ u_{2} \end{pmatrix},
        \end{align*}
        and $u_{1} + u_{3} > 0$ unless $u_{2} = 0$.
        Therefore, the system $A^{T}u = (0, 1), u \in \K^{*}$ has no solution, i.e., the valid inequality $y \geq 0$ cannot be obtained by Farkas aggregation.
        
        Nevertheless, for $t \geq 0$, let $u_{t} = (\sqrt{t^{2} + 1}, -t, 1)$, which  yields the valid inequality $(\sqrt{t^{2}+1} - t)x + y \geq 0$.
        Then, as $t \rightarrow + \infty$, the term $(\sqrt{t^{2}+1} - t)$ becomes negligible and the inequality becomes, in the limit, $y \geq 0$.
    
    \end{example}
    
\subsection{Disjunctive inequalities}
    
    We now consider disjunctive conic sets, i.e., sets of the form
    \begin{align}
        \D &= \left\{
            x \in \R^{n}
        \ \middle| \
            \bigvee_{h=1}^{H} D_{h} x \Kgeq{Q_{h}} d_{h}
        \right\}
        \\
        & 
        = \bigcup_{h=1}^{H} \left\{
            x \in \R^{n}
        \ \middle| \ 
            D_{h} x \Kgeq{Q_{h}} d_{h}
        \right\},
    \end{align}
    where $H \in \Z_{+}$ and, $\forall h$, $D_{h} \in \R^{m_{h} \times n}$ and $\Q_{h}$ is a proper cone.
    We refer to $\conv \D$ as the \emph{disjunctive hull} and,
    for $(\alpha, \beta) \in \polar \D$, we say that $\alpha^{T}x \geq \beta$ is a \emph{disjunctive inequality}.
    
    We focus on disjunctive conic sets and, again, we seek a tractable algebraic characterization of valid inequalities for such sets.
    We begin by stating an extension to the conic setting of Balas' representation of the convex hull of a union of polyhedra \cite{Balas1979}.
    
    \begin{theorem}[Characterization of the convex hull]
    \label{thm:ConvexHull}
        Let
        \begin{align*}
            \D &= \bigcup_{h=1}^{H}
            \underbrace{
            \left\{
                x \in \R^{n}
            \ \middle| \ 
                D_{h} x \Kgeq{\Q_{h}} d_{h}
            \right\}
            }_{\D_{h}}
           ,
        \end{align*}
        where, $\forall h$, $D_{h} \in \R^{m_{h} \times n}$ and $\Q_{h}$ is a proper cone, and let
        \begin{align*}
            \mathcal{S} &= \left\{
                x \in \R^{n}
            \ \middle| \ 
                \exists (y_{1}, ..., y_{H}, z_{1}, ..., z_{H}): 
                \begin{array}{ll}
                    \sum_{h} y_{h} = x,\\
                    D_{h} y_{h} \Kgeq{\Q_{h}} z_{h} d_{h}, & \forall h,\\
                    z_{h} \geq 0, & \forall h,\\
                    \sum_{h} z_{h} = 1, & \forall h,\\
                    y_{h} \in \R^{n}, & \forall h\\
                \end{array}
            \right\}.
        \end{align*}
        Then $\conv(\D) \subseteq \mathcal{S}$ and, in addition,
        \begin{enumerate}
            \item if $\forall h, \D_{h} \neq \emptyset$, then $\mathcal{S} \subseteq \cl \conv(\D)$;
            \item if $\forall h, \D_{h} = \X_{h} + W$, where $\X_{1}, ..., \X_{H}$ are non-empty, closed, bounded, convex sets and $W$ is a closed convex set, then
            \begin{align*}
                \conv(\D) = \mathcal{S} = \cl \conv(\D).
            \end{align*}
        \end{enumerate}
    \end{theorem}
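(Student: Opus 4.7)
The plan is to prove the three-way chain in Part~2 by decomposing it into smaller steps. First I would verify $\conv(\D) \subseteq \mathcal{S}$ unconditionally, then establish $\mathcal{S} \subseteq \cl \conv(\D)$ under the non-emptiness hypothesis of Part~1, and finally deduce Part~2 by proving that $\conv(\D)$ is itself closed. For the unconditional inclusion, given $x = \sum_{h} \lambda_{h} x_{h}$ with $\lambda_{h} \geq 0$, $\sum_{h} \lambda_{h} = 1$, and $x_{h} \in \D_{h}$ whenever $\lambda_{h} > 0$, setting $z_{h} = \lambda_{h}$ and $y_{h} = \lambda_{h} x_{h}$ (and $y_{h} = 0$ when $\lambda_{h} = 0$) directly fulfils the defining conditions of $\mathcal{S}$: positive homogeneity gives $D_{h} y_{h} = \lambda_{h} D_{h} x_{h} \Kgeq{\Q_{h}} \lambda_{h} d_{h} = z_{h} d_{h}$, while the degenerate terms satisfy $D_{h} \cdot 0 \Kgeq{\Q_{h}} 0$ trivially.

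For Part~1, given $(y_{h}, z_{h})_{h=1}^{H}$ witnessing $x \in \mathcal{S}$, the naive identification $x = \sum_{h : z_{h} > 0} z_{h} (y_{h}/z_{h})$ realizes $x$ as a convex combination of points of $\D$ only when every $z_{h}$ is strictly positive; the indices with $z_{h} = 0$ contribute recession-type terms that are not in any $\D_{h}$. To handle this, I would fix $\tilde{x}_{h} \in \D_{h}$ for each $h$ (which exists by hypothesis) and, for $\epsilon \in (0,1)$, introduce the perturbed multipliers
\begin{align*}
    z_{h}^{\epsilon} = (1-\epsilon) z_{h} + \epsilon/H,
    \qquad
    y_{h}^{\epsilon} = (1-\epsilon) y_{h} + (\epsilon/H) \tilde{x}_{h}.
\end{align*}
A convex combination of the conic inequalities then gives $D_{h} y_{h}^{\epsilon} \Kgeq{\Q_{h}} z_{h}^{\epsilon} d_{h}$ with every $z_{h}^{\epsilon} > 0$ and $\sum_{h} z_{h}^{\epsilon} = 1$, so $y_{h}^{\epsilon}/z_{h}^{\epsilon} \in \D_{h}$ and $x^{\epsilon} := \sum_{h} y_{h}^{\epsilon} = \sum_{h} z_{h}^{\epsilon}(y_{h}^{\epsilon}/z_{h}^{\epsilon}) \in \conv(\D)$. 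Since $x^{\epsilon} \to x$ as $\epsilon \to 0$, this yields $x \in \cl \conv(\D)$.

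For Part~2, combining the unconditional inclusion with Part~1 already delivers $\conv(\D) \subseteq \mathcal{S} \subseteq \cl \conv(\D)$, so it suffices to show that $\conv(\D)$ is closed. The key identity
\begin{align*}
    \conv(\D) = \conv\!\left(\bigcup_{h} \X_{h}\right) + W
\end{align*}
follows by expanding a convex combination of points of $\X_{h} + W$ and using the convexity of $W$ together with $\sum_{h} \lambda_{h} = 1$. The set $\conv(\bigcup_{h} \X_{h})$ is the image of the compact product $\Delta_{H} \times \X_{1} \times \cdots \times \X_{H}$ under the continuous map $(\lambda, \xi) \mapsto \sum_{h} \lambda_{h} \xi_{h}$, hence is compact; its Minkowski sum with the closed convex set $W$ is therefore closed, which forces $\conv(\D) = \mathcal{S} = \cl \conv(\D)$. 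The main obstacle throughout is the $z_{h} = 0$ case: in Part~1 the perturbation only goes through because each $\D_{h}$ is non-empty, and in Part~2 the bounded-ness of the $\X_{h}$ is indispensable, since the Minkowski sum of two merely closed convex sets can fail to be closed.
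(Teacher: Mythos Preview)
Your argument is correct. The paper itself does not supply a proof here but simply refers to Proposition~2.3.5 of Ben-Tal and Nemirovski, \emph{Lectures on Modern Convex Optimization}; your three steps---the homogenization $y_{h}=\lambda_{h}x_{h}$ for the unconditional inclusion, the $\epsilon$-perturbation toward fixed points $\tilde{x}_{h}\in\D_{h}$ for Part~1, and closedness of $\conv(\D)=\conv(\bigcup_{h}\X_{h})+W$ via compactness of $\conv(\bigcup_{h}\X_{h})$ for Part~2---are exactly the standard route taken in that reference, so you have in effect reconstructed the cited proof. One minor point: your reduction of Part~2 to Part~1 tacitly uses $W\neq\emptyset$ (so that each $\D_{h}=\X_{h}+W$ is non-empty); this is harmless since the statement is vacuous otherwise, but worth stating explicitly.
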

    \begin{proof}
        See Proposition 2.3.5 in \cite{Ben-Tal2001}.
        \qed
    \end{proof}
    
    Next, building on Farkas multipliers and the result of Theorem \ref{thm:background:ValidInequalities}, we can extend Balas' characterization of valid disjunctive inequalities (Theorem 3.1 in \cite{Balas1979}) to the conic setting.
    
    \begin{theorem}[Disjunctive inequalities]
    \label{thm:disjunctiveInequalities}
        Let
        \begin{align*}
            \D &= \bigcup_{h=1}^{H}
            \underbrace{
            \left\{
                x \in \R^{n}
            \ \middle| \ 
                D_{h} x \Kgeq{\Q_{h}} d_{h}
            \right\}
            }_{\D_{h}}
           ,
        \end{align*}
        where, $\forall h$, $D_{h} \in \R^{m_{h} \times n}$ and $\Q_{h}$ is a proper cone, and
        \begin{align*}
            \F = \bigcap_{h=1}^{H}
            \underbrace{
            \left\{
                (\alpha, \beta) \in \R^{n} \times \R
            \ \middle| \ 
                \exists u_{h} \in \Q_{h}^{*}: (\alpha = A^{T}u_{h}, \beta \leq b^{T} u_{h})
            \right\}
            }_{\F_{h}}
            .
        \end{align*}
        Then, $\F \subset \polar{\D}$ and, in addition,
        \begin{enumerate}
            \item if $\forall h, \D_{h} \neq \emptyset$, then $\polar{\D} = \bigcap_{h} \cl \F_{h}$;
            \item if $\forall h, \D_{h}$ is well-posed and $D_{h}$ has full column rank, then $\F = \polar{\D}$.
        \end{enumerate}
    \end{theorem}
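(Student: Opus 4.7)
The plan is to reduce this theorem to Theorem~\ref{thm:background:ValidInequalities} applied term-by-term, combined with the set-theoretic identity $\polar{(\X \cup \Y)} = \polar{\X} \cap \polar{\Y}$ stated earlier in this section. Iterating that identity gives $\polar{\D} = \bigcap_{h=1}^{H} \polar{\D_{h}}$, so the task reduces to analyzing each $\polar{\D_{h}}$ individually, for which Theorem~\ref{thm:background:ValidInequalities} already provides the answer.

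First I would establish the general inclusion $\F \subseteq \polar{\D}$. Each $\D_{h}$ is exactly of the form treated in Theorem~\ref{thm:background:ValidInequalities}, whose first conclusion gives unconditionally $\F_{h} \subseteq \cl \F_{h} \subseteq \polar{\D_{h}}$. Intersecting over $h$ yields
\begin{align*}
    \F = \bigcap_{h=1}^{H} \F_{h} \subseteq \bigcap_{h=1}^{H} \polar{\D_{h}} = \polar{\D}.
\end{align*}

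For part~1, assume that $\D_{h} \neq \emptyset$ for every $h$. Then case~1 of Theorem~\ref{thm:background:ValidInequalities} applies cone-wise and yields $\cl \F_{h} = \polar{\D_{h}}$, whence
\begin{align*}
    \polar{\D} = \bigcap_{h=1}^{H} \polar{\D_{h}} = \bigcap_{h=1}^{H} \cl \F_{h},
\end{align*}
as claimed. For part~2, the well-posedness of each $\D_{h}$ together with the full column rank of $D_{h}$ is exactly the hypothesis of case~2 of Theorem~\ref{thm:background:ValidInequalities}, which upgrades the conclusion to $\F_{h} = \polar{\D_{h}}$ without requiring a closure. Intersecting then gives $\F = \bigcap_{h} \F_{h} = \bigcap_{h} \polar{\D_{h}} = \polar{\D}$.

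The proof is essentially an assembly of the per-cone result with the lattice identity for polars of unions, so there is no genuinely new ingredient beyond Theorem~\ref{thm:background:ValidInequalities}. The one place I would double-check is the edge case in part~2 where some $\D_{h}$ is strongly infeasible (hence well-posed but empty): in that situation $\polar{\D_{h}} = \R^{n} \times \R$ and one must verify that every $(\alpha,\beta)$ can be reached through a single Farkas multiplier in $\Q_{h}^{*}$ by appropriate scaling of the strong-infeasibility certificate, so that $\F_{h}$ also equals $\R^{n} \times \R$ and the intersection identity goes through cleanly. This is the only potentially delicate point, and it is handled internally by Theorem~\ref{thm:background:ValidInequalities}.
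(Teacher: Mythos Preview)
Your proposal is correct and follows exactly the paper's approach: the paper's proof is the single line ``Immediate from Theorem~\ref{thm:background:ValidInequalities} and the fact that $\polar{\D} = \bigcap_{h} \polar{\D_{h}}$,'' which is precisely the reduction you spell out. Your extra care about the strongly-infeasible edge case in part~2 is a reasonable sanity check, but as you note it is absorbed by Theorem~\ref{thm:background:ValidInequalities} itself.
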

    \begin{proof}
        Immediate from Theorem \ref{thm:background:ValidInequalities} and the fact that $\polar{\D} = \bigcap_{h} \polar{D_{h}}$.
        \qed
    \end{proof}
    
    \begin{example}
    \label{ex:background:ill-posed disjunction}
        Let
        \begin{align*}
            \D = 
            \left\{ (x, y) \in \R^{2}
            \ \middle| \ 
                (x, y, x) \in \mathcal{L}_{3}
            \right\}
            \cup
            \left\{
                (x, y) \in \R^{2}
            \ \middle| \ 
                (-x, y, x) \in \mathcal{L}_{3}
            \right\}
            .
        \end{align*}
        Thus, $\D = \{ (x, 0) | x \in \R \}$, and valid inequalities for $\D$ are of the form $\pm y \geq \pm \beta$ for $\beta \leq 0$.
        Building on Example \ref{ex:background:illPosed}, it follows that $\F = \{(0, 0)\} \times \R_{-}$.
        Therefore, only the trivial inequality $0 \geq -1$ can be represented using finite Farkas multipliers for each disjunctive term.
    \end{example}

\section{Cut separation}
\label{sec:separation}

We consider an \MICONIC\ problem of the form
\begin{subequations}
\label{eq:MICP}
\begin{align}
	(MICP) \ \ \ 
	\label{eq:MICP:objective}
	\min_{x} \ \ \ & c^{T}x \\
	s.t. \ \ \
	\label{eq:MICP:eq}
	& A x = b\\
	\label{eq:MICP:conic}
	& x \in \K,\\
	\label{eq:MICP:integer}
	& x \in \Z^{p} \times \R^{n-p},
\end{align}
\end{subequations}
where $A \in \R^{m \times n}$, $p \leq n$, and $\K$ is a proper cone.
The continuous relaxation of $(MICP)$, denoted by $(CP)$, is given by \eqref{eq:MICP:objective}-\eqref{eq:MICP:conic}.
The feasible sets of $(MICP)$ and $(CP)$ are denoted by $\X$ and by $\C$, respectively.
    
Let $\bar{x} \in \R^{n}$ be a point to separate.
Since $\bar{x}$ is typically obtained from solving a relaxation of $(MICP)$, we will assume that $A \bar{x} = b$, i.e., all linear equality constraints are satisfied.
In particular, we will not assume that $\bar{x}$ is conic-feasible, i.e., we may have $\bar{x} \notin \K$, for instance when an outer-approximation algorithm is used.

Consider the disjunctive set
\begin{align}
    \label{eq:split_disjunction}
    \mathcal{D} = \bigcup_{h \in H} \D_{h}
    = \bigcup_{h \in H}
    \left\{
        x
        \ \middle| \ 
        \begin{array}{l}
            A x = b, x \in \K\\
            D_{h} x \Kgeq{\Q_{h}} d_{h}
        \end{array}
    \right\},
\end{align}
where each $\Q_{h}$ is a proper cone, and $\D \supseteq \X$.
Valid inequalities for $\D$ are referred to as \emph{disjunctive inequalities} or, equivalently, as \emph{disjunctive cuts}.
For $(\alpha, \beta) \in \polar{\D}$, the inequality $\alpha^{T}x \geq \beta$ is \emph{trivial} if $(\alpha, \beta) \in \polar{\C}$, and \emph{non-trivial} otherwise.
Following the terminology of \cite{coey2018outer}, \emph{$\K^{*}$ cuts} are trivial inequalities of the form $u^{T}x \geq 0$ for $u \in \K^{*}$.
Finally, a cut is \emph{violated} if $\alpha^{T}\bar{x} < \beta$.

It is always possible, e.g., through the use of facial reduction techniques \cite{Friberg2016_FacialReductionHeuristics}, to describe each $\D_{h}$ by a well-posed system of conic inequalities.
In addition, one may assume, after manual inspection, that $\forall h, \D_{h} \neq \emptyset$.
However, in a cutting-plane context, systematically performing such reductions and verifications can quickly become intractable.
Therefore, unless stated otherwise, we make no assumption regarding the feasibility nor well-posedness of individual disjunctive terms; we will show in Section \ref{sec:normalization} how to address such shortcomings in a systematic way.

\subsection{Separation problem}
\label{sec:separation:CGCP}

    Consider the \emph{cut-generating conic problem} (CGCP)
    \begin{subequations}
    \label{eq:CGCP}
    \begin{align}
        \label{eq:CGCP:obj}
        (CGCP) \ \ \ \min_{\alpha, \beta, u, \lambda, v} \ \ \ & \alpha^{T} \bar{x} - \beta\\
        s.t. \ \ \ 
        & \label{eq:CGCP:f1} \alpha = A^{T}u_{h} + \lambda_{h} + D_{h}^{T} v_{h}, && \forall h,\\
        & \label{eq:CGCP:f2} \beta  \leq b^{T}u_{h} + d_{h}^{T}v_{h}, && \forall h,\\
        & \label{eq:CGCP:domain} (u_{h}, \lambda_{h}, v_{h}) \in \R^{m} \times \K^{*} \times \Q_{h}^{*}, && \forall h,
    \end{align}
    \end{subequations}
    which naturally extends Balas' CGLP to the conic setting, see also \cite{Cezik2005}.
    In particular, it is a conic programming problem, which can be solved by, e.g., an interior-point algorithm.
    
    First, it follows from Theorem \ref{thm:disjunctiveInequalities} that, if $(\alpha, \beta, u, \lambda, v)$ is feasible for \eqref{eq:CGCP}, then $\alpha^{T}x \geq \beta$ is a disjunctive inequality.
    Under the stronger assumption of Case 2. in Theorem \ref{thm:disjunctiveInequalities}, every disjunctive inequality corresponds to a feasible solution of the CGCP.
    In the absence of such assumptions, however, the exact characterization of $\polar{\D}$ stated in Theorem \ref{thm:disjunctiveInequalities} may not hold.
    For instance, there may exist disjunctive inequalities that do not correspond to any feasible solution of the CGCP; as illustrated by Example \ref{ex:background:ill-posed disjunction}, it may even be that all feasible solutions of the CGCP correspond to trivial inequalities of the form $0 \geq \beta$ for some $\beta \leq 0$.
    
    Second, the feasible set of the CGCP is an unbounded cone, which contains the origin.
    Thus, the CGCP is either unbounded or bounded with objective value zero.
    In the former case, any unbounded ray yields a violated cut, while in the latter, no violated cut is obtained.
    Note that unbounded problems can lead to numerical issues for some interior-point algorithms.
    Therefore, it is common practice to add a normalization condition to the CGCP, whose role is further investigated in Section \ref{sec:normalization}.
    
    Third, the CGCP is strongly feasible.
    Indeed, let $\bar{u}_{h} = 0, \bar{v}_{h} \in \intr \Q_{h}^{*}, \forall h$.
    Then, since $\K^{*}$ has non-empty interior, there exists $\bar{\alpha}$ such that
        \begin{align*}
            \bar{\alpha} \succ_{\K^{*}} D_{h}^{T}\bar{v}_{h}, \forall h.
        \end{align*}
    Finally, letting $\bar{\lambda}_{h} = \bar{\alpha} - D_{h}^{T} \bar{v}_{h} \in \intr{\K^{*}}$ and $\bar{\beta} \leq d_{h}^{T} \bar{v}_{h}, \forall h$, it follows that $(\bar{\alpha}, \bar{\beta}, \bar{u}, \bar{\lambda}, \bar{x})$ is strongly feasible for the CGCP.
    
    Fourth, let $(\alpha, \beta, u, \lambda, v)$ be a feasible solution of the CGCP, and let
    \begin{align}
        (\tilde{\alpha}, \tilde{\beta}, \tilde{u}, \tilde{\lambda}, \tilde{v})
        &= (\alpha - A^{T}u_{0}, \beta - b^{T}u_{0}, u - u_{0}, \lambda, v),
    \end{align}
    where $u_{0} \in \R^{m}$.
    It is immediate to see that $(\tilde{\alpha}, \tilde{\beta}, \tilde{u}, \tilde{\lambda}, \tilde{v})$ is also feasible for the CGCP, with identical objective value since $A\bar{x} = b$.
    Therefore, without loss of generality, one of the $u_{h}$ can be arbitrarily set to zero in the formulation of the CGCP, thereby reducing its size.
    Furthermore, since $\C \subseteq \left\{ x \ \middle| \ Ax = b \right\}$, it follows that
    \begin{align}
    \label{eq:separation:EquivalentCuts}
        \C \cap \left\{ x \ \middle| \ \alpha^{T}x \geq \beta \right\}
        = \C \cap \left\{ x \ \middle| \ \tilde{\alpha}^{T}x \geq \tilde{\beta} \right\},
    \end{align}
    i.e., the two inequalities are equivalent in the sense that both cut off the same portion of the continuous relaxation.
    
\subsection{Membership problem}
\label{sec:separation:MCP}

    The dual problem of the CGCP is the \emph{membership conic problem} (MCP)
    \begin{subequations}
    \label{eq:MCP}
    \begin{align}
        \label{eq:MCP:obj}
        (MCP) \ \ \ \max_{y, z} \ \ \ & 0\\
        s.t. \ \ \
        \label{eq:MCP:friends}  & \sum_{h} y_{h} = \bar{x},\\
        \label{eq:MCP:convex}   & \sum_{h} z_{h} = 1,\\
        \label{eq:MCP:equality}  & A y_{h} = z_{h} b, & \forall h,\\
        \label{eq:MCP:disj}     & D_{h} y_{h} \Kgeq{\Q_{h}} z_{h} d_{h}, & \forall h,\\
        \label{eq:MCP:domain}    & (y_{h}, z_{h}) \in \K \times \R_{+}, & \forall h,
    \end{align}
    \end{subequations}
    which extends Bonami's membership LP \cite{Bonami2012a} to the conic setting.
    
    A geometrical interpretation of the MCP is provided by Theorem \ref{thm:ConvexHull}.
    If all disjunctive terms are non-empty and have identical recession cones, then \eqref{eq:MCP} is feasible if and only if $\bar{x} \in \conv(\D)$.
    In the general case, however, the exact characterization of $\conv \D$ given by Theorem \ref{thm:ConvexHull} may no longer hold, and we can only state that, if $\bar{x} \in \conv \D$, then the MCP is feasible.
    
    By weak duality, if the MCP is feasible, then the objective value of the CGCP is bounded below.
    If, in addition, the MCP is strongly feasible, then both the MCP and the CGCP are solvable with identical objective values.

\section{The roles of normalization}
\label{sec:normalization}

\newcommand{\drawContRelax}[1]{
    \fill [line width=0.0pt, fill=gray!20!white] (0.0, 0.0) circle (#1);
}

\newcommand{\drawSplitHull}[3]{
    \fill[orange!50!white] ({acos(#1/#3)}:#3) arc ({acos(#1/#3)}:{180+acos(#1/#3)}:#3);  
    \fill[orange!50!white] (-{acos(#2/#3)}:#3) arc (-{acos(#2/#3)}:{acos(#2/#3)}:#3);  
    \fill[orange!50!white, opacity=0.50] (-{acos(#2/#3)}:#3) -- ({acos(#2/#3)}:#3) -- ({acos(#1/#3)}:#3) -- ({180+acos(#1/#3)}:#3) -- (-{acos(#2/#3)}:#3);
    
}

\newcommand{\drawXbar}[2]{
    \draw [fill=black] (#1, #2) circle (1pt) node (xbar) {};
    \node[above] at ({#1+0.02}, {#2+0.02}) {$\bar{x}$};
}

This section focuses on the roles of the normalization condition in the CGCP.
On the one hand, through the lens of conic duality for the CGCP-MCP pair, we investigate the impact of the normalization on the solvability of CGCP.
On the other hand, by characterizing optimal solutions of the normalized CGCP, we assess the theoretical properties of the corresponding cuts.

The following normalization conditions are considered:
\begin{enumerate}[label=(\roman*)]
    \item the $\alpha$ normalization: $\| \alpha \|_{*} \leq 1$,
    \item the \emph{polar} normalization: $\gamma^{T}\alpha \leq 1$,
    \item the \emph{standard} normalization: $\sum_{h} \knorm{\rho}{\lambda_{h}} + \knorm{\sigma_{h}}{v_{h}} \leq 1$,
    \item the \emph{trivial} normalization: $\sum_{h} \knorm{\sigma_{h}}{v_{h}} \leq 1$,
    \item the \emph{uniform} normalization: $\sum_{h} \knorm{\rho}{\lambda_{h}} \leq 1$,
\end{enumerate}
where $\gamma \in \R^{n}$, $\rho \in \intr \K$ and $\sigma_{h} \in \intr \Q_{h}$.

Throughout this section, the strengths and shortcomings of each normalization are illustrated in a simple setting, described in Example \ref{ex:normalization} below.

\begin{example}
\label{ex:normalization}

    For $R > 0$, consider the MICP
    \begin{align}
        \min_{x} \ \ \ & -x_{1} - x_{2}\\
        s.t. \ \ \ &
        x_{0} = R,\\
        & x \in \mathcal{L}_{3},\\
        & x_{1}, x_{2} \in \Z,
    \end{align}
    and the split disjunction $(x_{1} \leq 0) \vee (x_{1} \geq 1)$.
    Thus, we have
    \begin{align*}
        \D_{1} &= \left\{ x \in \R^{3} \ \middle| \ x \in \mathcal{L}_{3}, x_{0} = R, x_{1} \leq 0 \right\},\\
        \D_{2} &= \left\{ x \in \R^{3} \ \middle| \ x \in \mathcal{L}_{3}, x_{0} = R, x_{1} \geq 1 \right\}.
    \end{align*}
    We consider the following three cases:
    \begin{enumerate}[label=(\alph*)]
        \item \label{ex:enum:well-posed} $R > 1$: $\D_{1}$ and $\D_{2}$ are both strongly feasible;
        \item \label{ex:enum:ill-posed} $R = 1$: $\D_{1}$ is strongly feasible and $\D_{2}$ is weakly feasible;
        \item \label{ex:enum:infeasible} $R < 1$: $\D_{1}$ is strongly feasible and $\D_{2} = \emptyset$.
    \end{enumerate}
    Each of these settings is illustrated in Figure \ref{fig:ex:normalization}.
    Finally, unless specified otherwise, $\bar{x}$ is the solution of the continuous relaxation, i.e., $\bar{x} = (R, \frac{R}{\sqrt{2}}, \frac{R}{\sqrt{2}})$.
    All CGCPs are solved as conic problems using Mosek 9.2 with default parameters.
    
    \begin{figure}
        \centering
        \subfloat[$R = 1.1$]{
        \label{fig:ex:norm:well-posed}
        \begin{tikzpicture}[scale=2]
            \clip(-0.5, -0.25) rectangle (1.25, 1.30);
            
            \drawContRelax{1.1}             
            \drawSplitHull{0.0}{1.0}{1.1}   
            \draw [line width=1pt] (0.0, -2.0) -- (0.0, 2.0);  
            \draw [line width=1pt] (1.0, -2.0) -- (1.0, 2.0);  
            
            \draw [line width=0.5pt, ->] (0.0, 1.20) -- (-0.20, 1.20);
            \draw [line width=0.5pt, ->] (1.0, 1.20) -- ( 1.20, 1.20);
            
            
            \drawXbar{0.778}{0.778}         
            
        \end{tikzpicture}
        }
        \hfill
        \subfloat[$R = 1.0$]{
        \label{fig:ex:norm:ill-posed}
        \begin{tikzpicture}[scale=2]
            \clip(-0.5, -0.25) rectangle (1.25, 1.30);
            
            \drawContRelax{1.0}             
            \drawSplitHull{0.0}{1.0}{1.0}   
            \draw [line width=1pt] (0.0, -2.0) -- (0.0, 2.0);  
            \draw [line width=1pt] (1.0, -2.0) -- (1.0, 2.0);  
            
            \draw [line width=0.5pt, ->] (0.0, 1.20) -- (-0.20, 1.20);
            \draw [line width=0.5pt, ->] (1.0, 1.20) -- ( 1.20, 1.20);
            
            \drawXbar{0.707}{0.707}         
            
        \end{tikzpicture}
        }
        \hfill
        \subfloat[$R = 0.9$]{
        \label{fig:ex:norm:infeasible}
        \begin{tikzpicture}[scale=2]
            \clip(-0.5, -0.25) rectangle (1.25, 1.30);
            
            \drawContRelax{0.9}             
            \fill[orange!50!white] (90:0.9) arc (90:270:0.9);  
            \draw [line width=1pt] (0.0, -2.0) -- (0.0, 2.0);  
            \draw [line width=1pt] (1.0, -2.0) -- (1.0, 2.0);  
            
            \draw [line width=0.5pt, ->] (0.0, 1.20) -- (-0.20, 1.20);
            \draw [line width=0.5pt, ->] (1.0, 1.20) -- ( 1.20, 1.20);

            \drawXbar{0.6364}{0.6364}         
            
        \end{tikzpicture}
        }
        
        \caption{The three settings from Example \ref{ex:normalization}, projected onto the $x_{0} = 1$ space. The domain of the continuous relaxation is in gray, the split hull in orange, and the split disjunction is indicated in black.}
        \label{fig:ex:normalization}
    \end{figure}
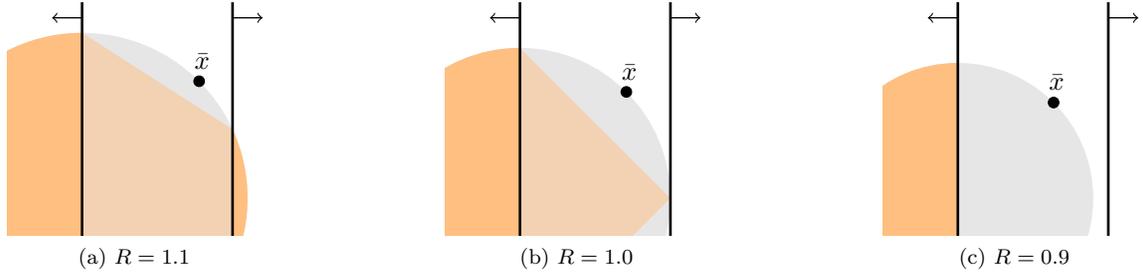
    
\end{example}

\subsection{$\alpha$ normalization}
\label{sec:normalization:alpha}

    A straightforward way of bounding the CGCP is to restrict the magnitude of $\alpha$.
    This approach was considered in previous work on \MICONV\ \cite{Ceria1999,Stubbs1999,Kilinc2017}, wherein authors considered restricting the $\ell_{1}$, $\ell_{\infty}$ or $\ell_{2}$ norm of $\alpha$.
    
    For a given norm $\|{\cdot}\|$, the CGCP then writes
    \begin{subequations}
    \label{eq:CGCP:alpha}
    \begin{align}
        \min_{\alpha, \beta, \lambda, u, v} \ \ \ & \alpha^{T} \bar{x} - \beta\\
        s.t. \ \ \ 
        & \label{eq:CGCP:alpha:f1} \alpha = A^{T}u_{h} + \lambda_{h} + D_{h}^{T} v_{h}, && \forall h,\\
        & \label{eq:CGCP:alpha:f2} \beta  \leq b^{T}u_{h} + d_{h}^{T}v_{h}, && \forall h,\\
        & \label{eq:CGCP:alpha:domain} (u_{h}, \lambda_{h}, v_{h}) \in \R^{m} \times \K^{*} \times \Q_{h}^{*}, && \forall h,\\
        & \label{eq:CGCP:alpha:normalization} \| \alpha \|_{*} \leq 1,
    \end{align}
    \end{subequations}
    and the corresponding MCP, up to a change of sign in the objective value, is
    \begin{subequations}
    \label{eq:MCP:alpha}
    \begin{align}
        \label{eq:MCP:alpha:obj}
        \min_{x, y, z} \ \ \ & \|\bar{x} - x \|\\
        s.t. \ \ \
        \label{eq:MCP:alpha:friends}  & \sum_{h} y_{h} = x,\\
        \label{eq:MCP:alpha:convex}   & \sum_{h} z_{h} = 1,\\
        \label{eq:MCP:alpha:equality}  & A y_{h} = z_{h} b, & \forall h,\\
        \label{eq:MCP:alpha:disj}     & D_{h} y_{h} \Kgeq{\Q_{h}} z_{h} d_{h}, & \forall h,\\
        \label{eq:MCP:alpha:domain}    & (y_{h}, z_{h}) \in \K \times \R_{+}, & \forall h.
    \end{align}
    \end{subequations}
    
    Geometrically, the CGCP \eqref{eq:CGCP:alpha} looks for a deepest cut, i.e., one that maximizes the distance from $\bar{x}$ to the hyperplane $\alpha^{T}x = \beta$, as measured by $\|{\cdot}\|$.
    Correspondingly, the MCP \eqref{eq:MCP:alpha} computes a projection of $\bar{x}$ onto the set defined by \eqref{eq:MCP:alpha:friends}-\eqref{eq:MCP:alpha:domain}, with respect to $\|{\cdot}\|$.
    It is trivially feasible if at least one of the disjunctive terms is non-empty, which is always the case if $\X \neq \emptyset$.
    This ensures that the CGCP \eqref{eq:CGCP:alpha} is never unbounded.
    If, in addition, each disjunctive term is strongly feasible, then the MCP \eqref{eq:MCP:alpha} is strongly feasible and both the MCP and the CGCP are solvable.
    
    Split cuts obtained with the $\alpha$ normalization in the context of Example \ref{ex:normalization} are illustrated in Figure \ref{fig:ex:alpha}; these results are obtained with the (self-dual) $\ell_{2}$ norm in \eqref{eq:CGCP:alpha:normalization}.
    Furthermore, some statistics regarding the resolution of the CGCP are reported in Table \ref{tab:CGCP:alpha}, namely: the number of interior-point iterations (Iter), and the magnitude of $\alpha, u, \lambda, v$ in the obtained CGCP solution.
    
    \begin{figure}
        \centering
        \subfloat[]{
        \label{fig:ex:alpha:well-posed}
        \begin{tikzpicture}[scale=2]
            \clip(-0.5, -0.25) rectangle (1.25, 1.30);
            
            \drawContRelax{1.1}             
            \drawSplitHull{0.0}{1.0}{1.1}   
            \drawXbar{0.778}{0.778}         
            
            \draw [line width=1pt, color=red] (-0.50, 1.42) -- (1.50, 0.14);
            
        \end{tikzpicture}
        }
        \hfill
        \subfloat[]{
        \label{fig:ex:alpha:ill-posed}
        \begin{tikzpicture}[scale=2]
            \clip(-0.5, -0.25) rectangle (1.25, 1.30);
            
            \drawContRelax{1.0}             
            \drawSplitHull{0.0}{1.0}{1.0}   
            \drawXbar{0.707}{0.707}         
            
            \draw [line width=1pt, color=red] (-0.50, 1.50) -- (1.50, -0.50);
            
        \end{tikzpicture}
        }
        \hfill
        \subfloat[]{
        \label{fig:ex:alpha:infeasible}
        \begin{tikzpicture}[scale=2]
            \clip(-0.5, -0.25) rectangle (1.25, 1.30);
            
            \drawContRelax{0.9}             
            \fill[orange!50!white] (90:0.9) arc (90:270:0.9);  
            
            \drawXbar{0.6364}{0.6364}
            \draw [line width=1pt, color=red] (0.0, -0.50) -- (0.0, 1.20);
            
            
            
        \end{tikzpicture}
        }
        
        \caption{Split cuts (in red) obtained with the $\alpha$ normalization.}
        \label{fig:ex:alpha}
    \end{figure}
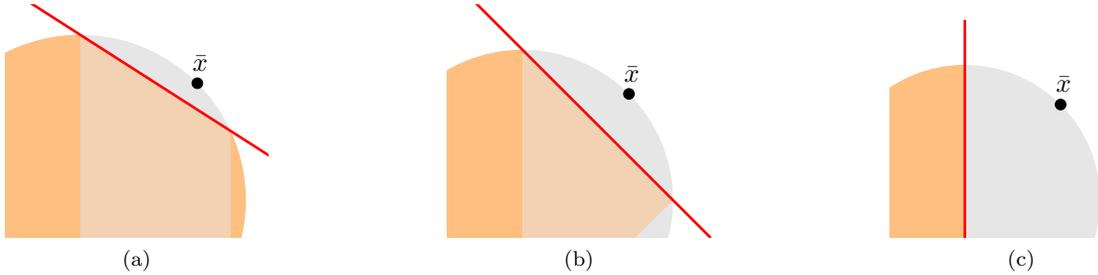
    
    \begin{table}
        \centering
        \caption{CGCP statistics for Example \ref{ex:normalization} and $\alpha$ normalization}
        \label{tab:CGCP:alpha}
        \begin{tabular}{ccrrrrrrrr}
        \toprule
            & Iter & $\| \alpha \|$ & $\| u_{1}\|$ & $\| u_{2}\|$ & $\|\lambda_{1}\|$ & $\|\lambda_{2}\|$ & $\| v_{1}\|$ & $\| v_{2}\|$ \\
        \midrule
            \ref{ex:enum:well-posed} & $ 8$ &     1.0 & $     0.8 $ & $     2.0 $ &     1.2 &     2.9 &     0.5 &     1.3\\
            \ref{ex:enum:ill-posed}  & $67^{*}$ &     1.0 & $     0.7 $ & $ 10958.8 $ &     1.0 & 15498.0 &     0.7 & 10958.1\\
            \ref{ex:enum:infeasible}  & $ 8$ &     1.0 & $     0.0 $ & $    13.7 $ &     0.0 &    19.3 &     1.0 &    12.6\\
        \bottomrule
        \end{tabular}\\
        $^{*}$: slow progress
    \end{table}
    
    In Example \ref{ex:normalization}\ref{ex:enum:well-posed}, the MCP is strongly feasible, no numerical trouble is encountered, and the obtained cut is a supporting hyperplane of the split hull.
    On the other hand, numerical issues are encountered in Example \ref{ex:normalization}\ref{ex:enum:ill-posed}.
    Indeed, as reported in Table \ref{tab:CGCP:alpha}, Mosek terminates due to slow progress after 67 iterations, albeit with a feasible solution, for which the magnitude of $u_{2}, \lambda_{2}, v_{2}$ is very large.
    The corresponding cut is displayed in Figure \ref{fig:ex:alpha:ill-posed}.
    Finally, in Example \ref{ex:normalization}\ref{ex:enum:infeasible}, although $\D_{2} = \emptyset$, the CGCP is solved without issue, thereby showing that it is not necessary for the MCP to be strongly feasible for the CGCP to be solvable.
    
    Example \ref{ex:normalization}\ref{ex:enum:ill-posed} illustrates the numerical challenges that may arise when the MCP is not strongly feasible.
    Indeed, in that case, the deepest cut writes $x_{1} + x_{2} \leq 1$, up to a positive scaling factor.
    However, although $x_{1} + x_{2} \leq 1$ is a valid inequality for $\D_{2}$, it is straightforward to see that it cannot be represented using finite Farkas multipliers: this corresponds to case \textbf{1} in Theorem \ref{thm:background:ValidInequalities}.
    Thus, the CGCP has no optimal solution, and there exists a (diverging) sequence of feasible solutions whose objective value becomes arbitrary close to $\opt(CGCP)$, corresponding to a sequence of valid inequalities that, in the limit, become equivalent to $x_{1} + x_{2} \leq 1$.
    Hence, the solver eventually runs into slow progress while the magnitude of $u_{2}, \lambda_{2}, v_{2}$ becomes large, as observed in Table \ref{tab:CGCP:alpha}.
    
    Interestingly, all cuts displayed in Figure \ref{fig:ex:alpha} are supporting hyperplanes of the split hull.
    A slightly more general result is stated in Theorem \ref{thm:alpha:tight}.
    
    \begin{theorem}
    \label{thm:alpha:tight}
        Assume that the CGCP \eqref{eq:CGCP:alpha} and the MCP \eqref{eq:MCP:alpha} are solvable, and let $(\alpha, \beta, u, \lambda, v)$ and $(x, y, z)$ be corresponding optimal solutions.
        Then, $$\alpha^{T}x = \beta.$$
    \end{theorem}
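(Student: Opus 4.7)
The plan is to use strong duality between the normalized CGCP and the MCP, decompose the primal objective value, and force two inequalities to be equalities. Write $(\alpha^*, \beta^*, u^*, \lambda^*, v^*)$ and $(x^*, y^*, z^*)$ for the optimal primal and dual solutions. We split
\begin{align*}
    \alpha^{*T} \bar{x} - \beta^{*}
    = \underbrace{\alpha^{*T}(\bar{x} - x^{*})}_{(A)} + \underbrace{(\alpha^{*T} x^{*} - \beta^{*})}_{(B)}
\end{align*}
and bound each term separately: term $(A)$ from below using H\"older's inequality together with the normalization $\| \alpha^{*} \|_{*} \leq 1$, and term $(B)$ from below by $0$, i.e., by showing that the cut $\alpha^{*T}x \geq \beta^{*}$ is satisfied at $x^{*}$.

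The first step is to invoke strong duality to identify $\opt(CGCP) = - \| \bar{x} - x^{*} \|$. This is legitimate because Section \ref{sec:separation:CGCP} established that the CGCP is strongly feasible; since it is also bounded by assumption (solvability), Theorem \ref{thm:conic duality} yields $\opt(CGCP) = \opt(MCP)$, and the MCP objective under the $\alpha$-normalization is exactly $-\| \bar{x} - x^{*} \|$ as noted in \eqref{eq:MCP:alpha:obj} with the sign convention from the text.

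The second step --- which I expect to be the main obstacle because it mixes primal and dual variables --- is to prove $(B) \geq 0$, i.e., that the cut is valid at the \textit{dual} optimum $x^{*}$. The idea is to compute $\alpha^{*T} y_{h}^{*} - z_{h}^{*} \beta^{*}$ for each $h$ using the CGCP aggregation identities. Substituting $\alpha^{*} = A^{T}u_{h}^{*} + \lambda_{h}^{*} + D_{h}^{T}v_{h}^{*}$ and using $A y_{h}^{*} = z_{h}^{*} b$ from \eqref{eq:MCP:alpha:equality}, one obtains
\begin{align*}
    \alpha^{*T} y_{h}^{*} - z_{h}^{*} \beta^{*}
    = z_{h}^{*}(b^{T}u_{h}^{*} + d_{h}^{T}v_{h}^{*} - \beta^{*}) + \lambda_{h}^{*T} y_{h}^{*} + v_{h}^{*T}(D_{h} y_{h}^{*} - z_{h}^{*} d_{h}),
\end{align*}
and each of the three summands is non-negative by \eqref{eq:CGCP:alpha:f2}, $\lambda_{h}^{*} \in \K^{*}, y_{h}^{*} \in \K$, and $v_{h}^{*} \in \Q_{h}^{*}, D_{h}y_{h}^{*} - z_{h}^{*} d_{h} \in \Q_{h}$, respectively. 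Summing over $h$ and using $\sum_{h} y_{h}^{*} = x^{*}$ and $\sum_{h} z_{h}^{*} = 1$ yields $(B) \geq 0$.

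Finally, combining $(A) \geq -\|\alpha^{*}\|_{*} \|\bar{x} - x^{*}\| \geq -\|\bar{x} - x^{*}\|$ with $(B) \geq 0$ gives $\alpha^{*T}\bar{x} - \beta^{*} \geq -\|\bar{x} - x^{*}\|$. Together with the strong-duality equality from the first step, both inequalities must be tight, and in particular $(B) = 0$, which is the claim.
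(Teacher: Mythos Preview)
Your proof is correct and follows essentially the same approach as the paper. Both arguments rely on strong duality (via strong feasibility of the CGCP), the H\"older/dual-norm bound $\alpha^{T}(\bar{x}-x) \geq -\|\bar{x}-x\|$, and the Farkas-aggregation computation showing $\alpha^{T}x \geq \beta$; the only difference is packaging---the paper proves $\alpha^{T}x \leq \beta$ and $\alpha^{T}x \geq \beta$ as two separate inequalities, whereas you split $\alpha^{T}\bar{x}-\beta$ into $(A)+(B)$ and squeeze to force $(B)=0$.
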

    \begin{proof}
        Let $\delta$ be the optimal value of the CGCP, i.e., $\delta = \alpha^{T}\bar{x} - \beta$.
        Since the CGCP is strongly feasible and bounded, by Theorem \ref{thm:conic duality}, strong duality holds.
        Thus, we have $\delta = -\|\bar{x} - x \|$.
        The case $\delta = 0$ is trivial, so we assume $\delta < 0$ and, thus, $x \neq \bar{x}$.
        
        Let $w = |\delta|^{-1}(x - \bar{x})$, i.e., $x = \bar{x} + |\delta| w$ and $\| w \| = 1$; in particular, we have $1 \geq \| \alpha \|_{*} \geq \alpha^{T}w$.
        It follows that
        \begin{align*}
            \alpha^{T}x - \beta
            &= \alpha^{T}\bar{x} + |\delta| \alpha^{T}w - \beta\\
            &= \delta + | \delta | \alpha^{T}w\\
            &= \delta(1 - \alpha^{T}w)\\
            & \leq 0.
        \end{align*}
        Thus, $\alpha^{T}x \leq \beta$.
        
        Next, we have
        \begin{align*}
            \alpha^{T}x
            &= \sum_{h} \alpha^{T}y_{h}\\
            &= \sum_{h} u_{h}^{T}A y_{h} + \lambda_{h}^{T}y_{h} + v_{h}^{T}D_{h}y_{h}\\
            & \geq \sum_{h} u_{h}^{T} (z_{h}b) + v_{h}^{T}(z_{h} d_{h})\\
            & \geq \sum_{h} z_{h} \beta\\
            &= \beta,
        \end{align*}
        which concludes the proof.
        \qed
    \end{proof}

    Therefore, if all disjunctive terms are non-empty, then, by Theorem \ref{thm:ConvexHull}, $x \in \cl \conv \D$, and the obtained inequality $\alpha^{T}x \geq \beta$ is indeed a supporting hyperplane of the disjunctive hull.
    

\subsection{Polar normalization}
\label{sec:normalization:polar}

    In the MILP setting, Balas and Perregard \cite{Perregaard2001,Balas2002} first suggest normalizing the CGLP with a single hyperplane of the form $\alpha^{T}\gamma = 1$.
    Doing so ensures that, if the CGLP is feasible and bounded, then there exists an optimal solution for which $(\alpha, \beta)$ is an extreme ray of $\polar{\left(\cl \conv \D \right)}$.
    This approach was then followed in \cite{CadouxLemarechal2013_ReflectionsGeneratingdisjunctive,Serra2020_ReformulatingDisjunctiveCut}, and more recently in \cite{ConfortiWolsey2019_FacetSeparationOne}.
    
    For $\gamma \in \R^{n}$, the CGCP writes
    \begin{subequations}
    \label{eq:CGCP:polar}
    \begin{align}
        \label{eq:CGCP:polar:obj}
        \min_{\alpha, \beta, u, v, \lambda} \ \ \ & \alpha^{T} \bar{x} - \beta\\
        s.t. \ \ \ 
        & \label{eq:CGCP:polar:f1} \alpha = A^{T}u_{h} + \lambda_{h} + D_{h}^{T} v_{h}, && \forall h,\\
        & \label{eq:CGCP:polar:f2} \beta  \leq b^{T}u_{h} + d_{h}^{T}v_{h}, && \forall h,\\
        & \label{eq:CGCP:polar:domain} (u_{h}, \lambda_{h}, v_{h}) \in \R^{m} \times \K^{*} \times \Q_{h}^{*}, && \forall h,\\
        & \label{eq:CGCP:polar:normalization} \alpha^{T}\gamma \leq 1,
    \end{align}
    \end{subequations}
    and the MCP is given by
    \begin{subequations}
    \label{eq:MCP:polar}
    \begin{align}
        \label{eq:MCP:polar:obj}
        \min_{y, z, \eta} \ \ \ & \eta\\
        s.t. \ \ \
        \label{eq:MCP:polar:friends}  & \sum_{h} y_{h} = \bar{x} + \eta \gamma,\\
        \label{eq:MCP:polar:convex}   & \sum_{h} z_{h} = 1,\\
        \label{eq:MCP:polar:constr1}  & A y_{h} = z_{h} b, & \forall h,\\
        \label{eq:MCP:polar:disj}     & D_{h} y_{h} \Kgeq{\Q_{h}} z_{h} d_{h}, & \forall h,\\
        \label{eq:MCP:polar:domain}    & (y_{h}, z_{h}) \in \K \times \R_{+}, & \forall h,\\
        \label{eq:MCP:polar:nonneg}   & \eta \geq 0.
    \end{align}
    \end{subequations}
    
    It follows from Theorem \ref{thm:ConvexHull} that, if there exists $\eta \geq 0$ such that $(\bar{x} + \eta \gamma) \in \conv \D$, then the MCP \eqref{eq:MCP:polar} is feasible.
    This is always the case if $\gamma = x^{*} - \bar{x}$, for some $x^{*} \in \conv \D$.
    If, in addition, each individual disjunction is strongly feasible and $x^{*}$ is obtained as a convex combination of strongly feasible points, then the MCP \eqref{eq:MCP:polar} is strongly feasible.
    
    Split cuts obtained for Example \ref{ex:normalization} with the polar normalization are illustrated in Figure \ref{fig:ex:polar}, and the corresponding CGCP statistics are reported in Table \ref{tab:CGCP:polar}.
    In each case, we set $\gamma = x^{*} - \bar{x}$, where $x^{*} = (R, 0 ,0)$.
    
    In all three cases, the obtained cut is identical to the one obtained with the $\alpha$ normalization, although this is not the case in general.
    Furthermore, as reported in Table \ref{tab:CGCP:polar}, numerical issues are also encountered for Example \ref{ex:normalization}\ref{ex:enum:ill-posed}, for the same reasons as for the $\alpha$ normalization: the CGCP is not solvable, and Mosek terminates with slow progress while the magnitude of $u_{2}, \lambda_{2}, v_{2}$ diverges.

    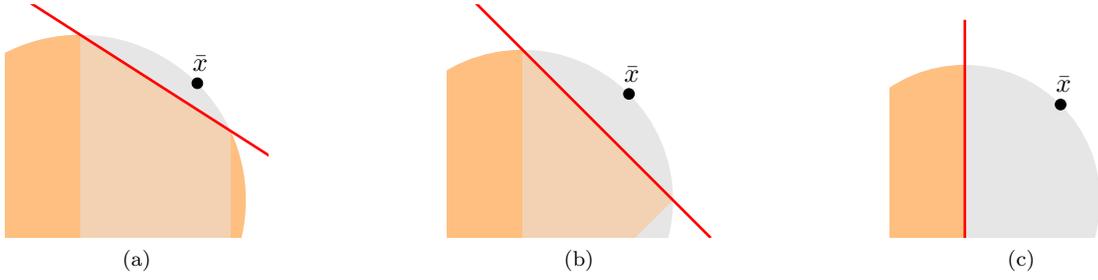
\begin{figure}
        \centering
        \subfloat[]{
        \label{fig:ex:polar:well-posed}
        \begin{tikzpicture}[scale=2]
            \clip(-0.5, -0.25) rectangle (1.25, 1.30);
            
            \drawContRelax{1.1}             
            \drawSplitHull{0.0}{1.0}{1.1}   
            \drawXbar{0.778}{0.778}         
            
            \draw [line width=1pt, color=red] (-0.50, 1.42) -- (1.50, 0.14);
            
        \end{tikzpicture}
        }
        \hfill
        \subfloat[]{
        \label{fig:ex:polar:ill-posed}
        \begin{tikzpicture}[scale=2]
            \clip(-0.5, -0.25) rectangle (1.25, 1.30);
            
            \drawContRelax{1.0}             
            \drawSplitHull{0.0}{1.0}{1.0}   
            \drawXbar{0.707}{0.707}         
            
            \draw [line width=1pt, color=red] (-0.50, 1.50) -- (1.50, -0.50);
            
        \end{tikzpicture}
        }
        \hfill
        \subfloat[]{
        \label{fig:ex:polar:infeasible}
        \begin{tikzpicture}[scale=2]
            \clip(-0.5, -0.25) rectangle (1.25, 1.30);
            
            \drawContRelax{0.9}             
            \fill[orange!50!white] (90:0.9) arc (90:270:0.9);  
            
            \drawXbar{0.6364}{0.6364}
            \draw [line width=1pt, color=red] (0.0, -0.50) -- (0.0, 1.20);
            
            
            
        \end{tikzpicture}
        }
        
        \caption{Split cuts (in red) obtained with the polar normalization.}
        \label{fig:ex:polar}
    \end{figure}
    
    \begin{table}
        \centering
        \caption{CGCP statistics for Example \ref{ex:normalization} and polar normalization}
        \label{tab:CGCP:polar}
        \begin{tabular}{ccrrrrrrrr}
        \toprule
            & Iter & $\| \alpha \|$ & $\| u_{1}\|$ & $\| u_{2}\|$ & $\|\lambda_{1}\|$ & $\|\lambda_{2}\|$ & $\| v_{1}\|$ & $\| v_{2}\|$ \\
        \midrule
            \ref{ex:enum:well-posed}  & $ 8$ &     1.2 & $     0.0 $ & $     1.1 $ &     1.1 &     2.7 &     0.5 &     1.2\\
            \ref{ex:enum:ill-posed}  & $66^{*}$ &     1.2 & $     0.0 $ & $  9912.6 $ &     1.0 & 14019.5 &     0.7 &  9912.6\\
            \ref{ex:enum:infeasible}   & $ 8$ &     1.6 & $     0.0 $ & $    20.5 $ &     0.0 &    28.9 &     1.6 &    18.8\\
        \bottomrule
        \end{tabular}\\
        $^{*}$: slow progress
    \end{table}
    
    Similar to the $\alpha$-normalization, if all disjunctive terms are non-empty, then cuts obtained with the polar normalization are also supporting hyperplanes of the disjunctive hull, as expressed by Theorem \ref{thm:polar:supportingHyperplane}.
    
    \begin{theorem}
    \label{thm:polar:supportingHyperplane}
        Assume that the CGCP \eqref{eq:CGCP:polar} and the MCP \eqref{eq:MCP:polar} are solvable, and let $(\alpha, \beta, u, \lambda, v)$ and $(\eta, y, z)$ be corresponding optimal solutions.
        Then, 
        \begin{align*}
            \alpha^{T}(\bar{x} + \eta \gamma) = \beta.
        \end{align*}
    \end{theorem}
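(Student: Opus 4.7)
The plan is to mirror the proof of Theorem \ref{thm:alpha:tight}, with the MCP primal constraint $\sum_h y_h = \bar{x} + \eta \gamma$ replacing the analogous $\sum_h y_h = x$, and with the normalization $\gamma^T \alpha \leq 1$ (together with $\eta \geq 0$) playing the role that $\| \alpha \|_{*} \leq 1$ played before. The argument will boil down to establishing two opposite inequalities for $\alpha^T(\bar{x} + \eta \gamma)$ and $\beta$, combined with strong duality between CGCP \eqref{eq:CGCP:polar} and MCP \eqref{eq:MCP:polar}.

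First I would invoke strong duality. The CGCP \eqref{eq:CGCP:polar} is strongly feasible (by the generic construction given in Section \ref{sec:separation:CGCP}, which adapts trivially since the polar normalization is a single linear inequality) and is bounded by the solvability assumption, so Theorem \ref{thm:conic duality} applies. Tracking the sign convention---$\min \eta$ in the MCP corresponds to $\max -\eta$ in the dual pairing---this yields $\alpha^T \bar{x} - \beta = -\eta$ at optimality, i.e., $\beta = \alpha^T \bar{x} + \eta$. The upper bound is then immediate:
\begin{align*}
    \alpha^T(\bar{x} + \eta \gamma) = \alpha^T \bar{x} + \eta (\gamma^T \alpha) \leq \alpha^T \bar{x} + \eta = \beta,
\end{align*}
using $\gamma^T \alpha \leq 1$ and $\eta \geq 0$ from primal feasibility of CGCP and MCP respectively.

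For the reverse inequality I would use $\sum_h y_h = \bar{x} + \eta \gamma$ to write $\alpha^T(\bar{x} + \eta \gamma) = \sum_h \alpha^T y_h$, substitute $\alpha = A^T u_h + \lambda_h + D_h^T v_h$, and then bound each term below exactly as in the proof of Theorem \ref{thm:alpha:tight}: use $\lambda_h^T y_h \geq 0$ (since $\lambda_h \in \K^*$, $y_h \in \K$), $v_h^T(D_h y_h - z_h d_h) \geq 0$ (since $v_h \in \Q_h^*$ and $D_h y_h \Kgeq{\Q_h} z_h d_h$), the equality $A y_h = z_h b$, the CGCP inequality $\beta \leq b^T u_h + d_h^T v_h$, and finally $\sum_h z_h = 1$. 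This produces $\sum_h \alpha^T y_h \geq \sum_h z_h \beta = \beta$.

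The main obstacle is the bookkeeping around strong duality: I have to verify that the dual pairing I set up for \eqref{eq:CGCP:polar}--\eqref{eq:MCP:polar} is correct (in particular the sign identification $\min \eta = -(\alpha^T \bar{x} - \beta)$), and that strong feasibility of the CGCP suffices---both are routine but easy to mis-state. Once strong duality is in hand, the two inequalities above pin $\alpha^T(\bar{x} + \eta \gamma)$ exactly to $\beta$. One may note that this in fact recovers the usual complementary-slackness consequence $\eta(1-\gamma^T\alpha)=0$ on the normalization constraint.
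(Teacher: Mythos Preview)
Your proof is correct. It differs from the paper's argument, which is shorter and avoids the Farkas-aggregation lower bound entirely: after establishing $\alpha^{T}\bar{x}-\beta=-\eta$ via strong duality, the paper argues that either $\eta=0$ (in which case the claim is immediate) or the normalization must be active, i.e., $\gamma^{T}\alpha=1$ (otherwise the CGCP solution could be scaled to contradict optimality), and then $\alpha^{T}(\bar{x}+\eta\gamma)=\alpha^{T}\bar{x}+\eta=\beta$ follows by pure algebra. Your approach instead sandwiches $\alpha^{T}(\bar{x}+\eta\gamma)$ between two copies of $\beta$, reusing the weak-duality computation from the proof of Theorem~\ref{thm:alpha:tight} for the lower bound; this is slightly longer but has the merit of being uniform with that earlier proof and of making the complementary-slackness identity $\eta(1-\gamma^{T}\alpha)=0$ a by-product rather than an input.
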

    \begin{proof}
        By conic strong duality, we have $\alpha^{T}\bar{x} - \beta = -\eta$.
        If the optimal value of the CGCP is $0$, then the result is trivial.
        Similarly, if $\alpha^{T}\gamma \leq 0$, then the optimal value of the CGCP must be $0$ and the result is trivial.
        
        We now assume that $\alpha^{T}\gamma > 0$ and $\opt(CGCP) < 0$.
        Since $(\alpha, \beta, u, \lambda, v)$ is optimal for the CGCP, we must have $\alpha^{T}\gamma = 1$.
        Then,
        \begin{align*}
            \alpha^{T}\bar{x} - \beta 
            &= -\eta\\
            &= -(\alpha^{T}\gamma) \eta,
        \end{align*}
        i.e., $\alpha^{T}(\bar{x} + \eta \gamma) = \beta$.
        \qed
    \end{proof}

\subsection{Standard normalization}
\label{sec:normalization:standard}

    One of the most common normalizations in the MILP setting is the so-called \emph{standard normalization} \cite{Balas1979,Balas2003,Fischetti2011}.
    Here, we introduce its conic generalization 
    \begin{align}
        \label{eq:norm:standard}
        \sum_{h} \knorm{\rho}{\lambda_{h}} + \knorm{\sigma_{h}}{v_{h}} \leq 1,
    \end{align}
    where $\rho \in \intr \K$ and $\sigma_{h} \in \intr \Q_{h}$.
    When all cones $\K, \Q_{1}, ..., \Q_{H}$ are non-negative orthants, \eqref{eq:norm:standard} indeed generalizes both the standard normalization for MILP and its \emph{Euclidean normalization} variant \cite{Fischetti2011}, by setting $\rho$ and $\sigma$ appropriately.
    
    The separation problem then writes
    \begin{subequations}
    \label{eq:CGCP:standard}
    \begin{align}
        \label{eq:CGCP:standard:obj}
        \min_{\alpha, \beta, u, v, \lambda} \ \ \ & \alpha^{T} \bar{x} - \beta\\
        s.t. \ \ \ 
        & \label{eq:CGCP:standard:f1} \alpha = A^{T}u_{h} + \lambda_{h} + D_{h}^{T} v_{h}, && \forall h,\\
        & \label{eq:CGCP:standard:f2} \beta  \leq b^{T}u_{h} + d_{h}^{T}v_{h}, && \forall h,\\
        & \label{eq:CGCP:standard:domain} (u_{h}, \lambda_{h}, v_{h}) \in \R^{m} \times \K^{*} \times \Q_{h}^{*}, && \forall h,\\
        & \label{eq:CGCP:standard:normalization} \sum_{h} \knorm{\rho}{\lambda_{h}} + \knorm{\sigma_{h}}{v_{h}}  \leq 1.
    \end{align}
    \end{subequations}
    Note that \eqref{eq:CGCP:standard:normalization} consists of a single linear inequality in the $\lambda, v$ space, and that it explicitly bounds their magnitude.
    
    Up to a change of sign in the objective value, the MCP is
    \begin{subequations}
    \label{eq:MCP:standard}
    \begin{align}
        \label{eq:MCP:standard:obj}
        \min_{y, z, \eta} \ \ \ & \eta\\
        s.t. \ \ \
        \label{eq:MCP:standard:friends}  & \sum_{h} y_{h} = \bar{x},\\
        \label{eq:MCP:standard:convex}   & \sum_{h} z_{h} = 1,\\
        \label{eq:MCP:standard:constr1}  & A y_{h} = z_{h} b, & \forall h,\\
        \label{eq:MCP:standard:disj}     & D_{h} y_{h} + \eta \sigma_{h} \Kgeq{\Q_{h}} z_{h} d_{h}, & \forall h,\\
        \label{eq:MCP:standard:domain}    & (y_{h} + \eta \rho, z_{h}) \in \K \times \R_{+}, & \forall h,\\
        \label{eq:MCP:standard:nonneg}   & \eta \geq 0.
    \end{align}
    \end{subequations}
    The dual counterpart of the standard normalization corresponds to penalizing the violation of the original conic constraints  \eqref{eq:MCP:domain} and \eqref{eq:MCP:disj}, through the artificial slack variable $\eta$ in \eqref{eq:MCP:standard:domain} and \eqref{eq:MCP:standard:disj}.
    As shown in Theorem \ref{thm:standard:StrongDuality}, this ensures that the MCP is strongly feasible.
    
    \begin{theorem}
        \label{thm:standard:StrongDuality}
        The CGCP \eqref{eq:CGCP:standard} and the MCP \eqref{eq:MCP:standard}  are strongly feasible.
    \end{theorem}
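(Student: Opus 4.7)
The plan is to exhibit explicit strongly feasible points for both the CGCP \eqref{eq:CGCP:standard} and the MCP \eqref{eq:MCP:standard}. Since strong feasibility only requires strict satisfaction of the non-polyhedral conic constraints, and the normalization \eqref{eq:CGCP:standard:normalization} together with the $\beta$-constraint \eqref{eq:CGCP:standard:f2} are linear (polyhedral), they only have to be satisfied in the ordinary sense. The construction is therefore mostly a matter of choosing the ``conic'' variables in the interior of their respective cones and picking the ``scalar'' variables to clean up the remaining linear constraints.

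For the CGCP, I would reuse the argument already given for the unnormalized CGCP in Section \ref{sec:separation:CGCP}. Namely, fix $\bar{u}_{h} = 0$ and $\bar{v}_{h} \in \intr \Q_{h}^{*}$; then pick some fixed $\mu \in \intr \K^{*}$ and set $\bar{\alpha} = t \mu + \max_{h}(\cdot)$-type vector, or more simply $\bar{\alpha} = t \mu$ for $t$ large enough that $\bar{\lambda}_{h} := \bar{\alpha} - D_{h}^{T} \bar{v}_{h} = t\mu - D_{h}^{T}\bar v_h$ lies in $\intr \K^{*}$ for every $h$ (this works because $\mu - D_{h}^{T}\bar{v}_{h}/t \to \mu \in \intr\K^{*}$ as $t \to \infty$). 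Pick $\bar{\beta}$ sufficiently negative so that \eqref{eq:CGCP:standard:f2} holds. Finally, rescale the whole solution $(\bar\alpha, \bar\beta, \bar u, \bar\lambda, \bar v)$ by a small positive constant $\tau > 0$: positive-homogeneity of the defining equalities and the conic constraints is preserved, $\tau \bar\lambda_h$ and $\tau \bar v_h$ remain in the (open) interior of the respective dual cones, and for $\tau$ small enough the normalization $\sum_h (\rho^{T}(\tau\bar\lambda_h) + \sigma_h^{T}(\tau\bar v_h)) \leq 1$ is satisfied.

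For the MCP, I would exploit the slack variable $\eta$. Take $z_{h} = 1/H$ and $y_{h} = \bar{x}/H$ for all $h$. Then \eqref{eq:MCP:standard:friends}--\eqref{eq:MCP:standard:convex} hold, and \eqref{eq:MCP:standard:constr1} reduces to $A\bar{x}/H = b/H$, which follows from the standing assumption $A\bar{x} = b$. For \eqref{eq:MCP:standard:disj}, the left-hand side becomes $D_{h}\bar{x}/H + \eta \sigma_{h}$; since $\sigma_{h} \in \intr \Q_{h}$ and $\intr \Q_{h}$ is a cone, for $\eta$ large enough $\eta\sigma_{h}$ dominates $d_{h}/H - D_{h}\bar{x}/H$ in the sense that their sum lies in $\intr \Q_{h}$. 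Similarly, $\bar{x}/H + \eta \rho \in \intr \K$ for sufficiently large $\eta$ because $\rho \in \intr \K$. Choose $\eta$ larger than both thresholds and the point is strongly feasible (the remaining constraints $z_h \geq 0$ and $\eta \geq 0$ are polyhedral and hold strictly).

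There is no real obstacle here: the whole content of the result is that the standard normalization was designed precisely so that its dual introduces the slack $\eta$ on every non-polyhedral constraint, which provides a ``universal relaxation direction'' making strong feasibility of the MCP trivial, and the CGCP side is unaffected because the normalization itself is a single polyhedral inequality that can be accommodated by a final rescaling. The only item worth double-checking is that $\knorm{\rho}{\cdot}$ and $\knorm{\sigma_h}{\cdot}$, as defined in \eqref{eq:conicNorm}, are indeed linear functionals on $\K^{*}$ and $\Q_{h}^{*}$, so that \eqref{eq:CGCP:standard:normalization} is a single linear inequality and not a non-polyhedral conic constraint; this is immediate from the definition.
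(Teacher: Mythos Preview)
Your proposal is correct and follows essentially the same approach as the paper: both construct a strongly feasible point for the CGCP by reusing the argument of Section~\ref{sec:separation:CGCP} and rescaling to meet the (linear) normalization, and both construct a strongly feasible point for the MCP by taking $y_{h}=\bar{x}/H$, $z_{h}=1/H$ and pushing $\eta$ large enough to land the conic slacks in the interiors of $\K$ and $\Q_{h}$. Your write-up is in fact slightly more careful in spelling out why the rescaling preserves strict conic feasibility and in invoking $A\bar{x}=b$ for \eqref{eq:MCP:standard:constr1}.
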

    \begin{proof}
        It was shown in Section \ref{sec:separation:CGCP} that the un-normalized CGCP \eqref{eq:CGCP} admits a strongly feasible solution.
        Scaling this point appropriately yields a strongly feasible solution for the CGCP \eqref{eq:CGCP:standard}.
        
        We now show that the MCP \eqref{eq:MCP:trivial:constr1} is strongly feasible.
        Let $y_{h} = \frac{1}{H} \bar{x}$ and $z_{h} = \frac{1}{H}$, $\forall h$.
        Then, since $\K, \Q_{1}, ..., \Q_{H}$ are proper cones, there exists $\eta_{h} \geq 0$ and $\tau_{h} \geq 0$ such that
        \begin{align*}
            \forall h, \ &y_{h} + \eta_{h} \rho \Kgt{\K} 0,\\
            \forall h, \ &D_{h} y_{h} + \tau_{h} \sigma_{h} \Kgt{\Q_{h}} d_{h}.
        \end{align*}
        Letting $\eta = \max (\eta_{1}, ..., \eta_{H}, \tau_{1}, ..., \tau_{H})$ then yields a strongly feasible point $(y, z, \eta)$, which concludes the proof.
        \qed
    \end{proof}
    
    A direct consequence of Theorem \ref{thm:standard:StrongDuality} is that both the CGCP \eqref{eq:CGCP:standard} and the MCP \eqref{eq:MCP:standard} are solvable, and that conic strong duality holds.
    Importantly, aside from the assumption $A \bar{x} = b$, this result does not depend on $\bar{x}$, nor on the well-posedness of individual disjunctions.
    
    Split cuts obtained with the standard normalization are illustrated in Figure \ref{fig:ex:standard}, and the corresponding CGCP statistics are reported in Table \ref{tab:CGCP:standard}.
    On the one hand, Table \ref{tab:CGCP:standard} illustrate the good numerical behavior of the CGCP \eqref{eq:CGCP:standard}, a direct consequence of having enforced strong feasibility of the MCP.
    On the other hand, the obtained cuts are not as strong as the ones obtained with the $\alpha$ or polar normalizations.
    Indeed, in general, the cut obtained with the standard normalization may not be a supported hyperplane of the disjunctive hull.

    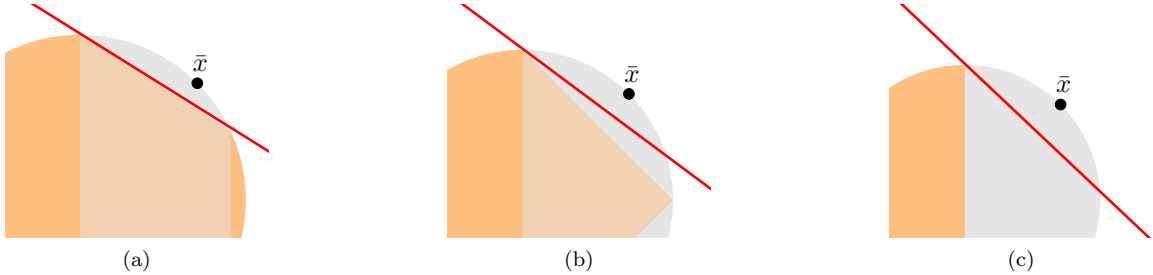
\begin{figure}
        \centering
        \subfloat[]{
        \label{fig:ex:standard:well-posed}
        \begin{tikzpicture}[scale=2]
            \clip(-0.5, -0.25) rectangle (1.25, 1.30);
            
            \drawContRelax{1.1}             
            \drawSplitHull{0.0}{1.0}{1.1}   
            \drawXbar{0.778}{0.778}         
            
            \draw [line width=1pt, color=red] (-0.50, 1.42) -- (1.50, 0.17);
            
        \end{tikzpicture}
        }
        \hfill
        \subfloat[]{
        \label{fig:ex:standard:ill-posed}
        \begin{tikzpicture}[scale=2]
            \clip(-0.5, -0.25) rectangle (1.25, 1.30);
            
            \drawContRelax{1.0}             
            \drawSplitHull{0.0}{1.0}{1.0}   
            \drawXbar{0.707}{0.707}         
            
            \draw [line width=1pt, color=red] (-0.50, 1.38) -- (1.50, -0.11);
            
        \end{tikzpicture}
        }
        \hfill
        \subfloat[]{
        \label{fig:ex:standard:infeasible}
        \begin{tikzpicture}[scale=2]
            \clip(-0.5, -0.25) rectangle (1.25, 1.30);
            
            \drawContRelax{0.9}             
            \fill[orange!50!white] (90:0.9) arc (90:270:0.9);  
            
            \drawXbar{0.6364}{0.6364}  
            \draw [line width=1pt, color=red] (-0.50, 1.38) -- (1.50, -0.51);
            
        \end{tikzpicture}
        }
        
        \caption{%
            Split cuts (in red) obtained with the standard normalization.
        }
        \label{fig:ex:standard}
    \end{figure}
    
    \begin{table}
        \centering
        \caption{CGCP statistics for Example \ref{ex:normalization} and standard normalization}
        \label{tab:CGCP:standard}
        \begin{tabular}{ccrrrrrrrr}
        \toprule
            & Iter & $\| \alpha \|$ & $\| u_{1}\|$ & $\| u_{2}\|$ & $\|\lambda_{1}\|$ & $\|\lambda_{2}\|$ & $\| v_{1}\|$ & $\| v_{2}\|$ \\
        \midrule
            \ref{ex:enum:well-posed}  & $ 8$ &     0.4 & $     0.0 $ & $     0.2 $ &     0.3 &     0.6 &     0.1 &     0.2\\
            \ref{ex:enum:ill-posed}  & $ 8$ &     0.3 & $     0.0 $ & $     0.2 $ &     0.3 &     0.6 &     0.1 &     0.2\\
            \ref{ex:enum:infeasible}  & $ 7$ &     0.3 & $     0.0 $ & $     0.3 $ &     0.3 &     0.6 &     0.1 &     0.2\\
        \bottomrule
        \end{tabular}
    \end{table}

\subsection{Trivial normalization}
\label{sec:nomralization:trivial}

    The trivial normalization is obtained by setting $\rho = 0$ in the standard normalization.
    The separation problem then writes
    \begin{subequations}
    \label{eq:CGCP:trivial}
    \begin{align}
        \label{eq:CGCP:trivial:obj}
        \min_{\alpha, \beta, u, v, \lambda} \ \ \ & \alpha^{T} \bar{x} - \beta\\
        s.t. \ \ \ 
        & \label{eq:CGCP:trivial:f1} \alpha = A^{T}u_{h} + \lambda_{h} + D_{h}^{T} v_{h}, && \forall h,\\
        & \label{eq:CGCP:trivial:f2} \beta  \leq b^{T}u_{h} + d_{h}^{T}v_{h}, && \forall h,\\
        & \label{eq:CGCP:trivial:domain} (u_{h}, \lambda_{h}, v_{h}) \in \R^{m} \times \K^{*} \times \Q_{h}^{*}, && \forall h,\\
        & \label{eq:CGCP:trivial:normalization} \sum_{h} \knorm{\sigma_{h}}{v_{h}}  \leq 1.
    \end{align}
    \end{subequations}
    Note that, for a split disjunction $(-\pi^{T}x \geq -\pi_{0}) \vee (\pi^{T}x \geq \pi_{0} +1)$ and $\sigma_{h} = 1$, \eqref{eq:CGCP:trivial:normalization} reduces to
    \begin{align}
        \label{eq:trivial:MILP}
        v_{1} + v_{2} \leq 1,
    \end{align}
    which is the so-called trivial normalization \cite{Fischetti2011} in the MILP setting.
    In particular, Gomory Mixed-Integer cuts correspond to optimal solutions of the CGLP with trivial normalization.
    
    Up to a change of sign in the objective value, the MCP writes
    \begin{subequations}
    \label{eq:MCP:trivial}
    \begin{align}
        \label{eq:MCP:trivial:obj}
        \min_{y, z, \eta} \ \ \ & \eta\\
        s.t. \ \ \
        \label{eq:MCP:trivial:friends}  & \sum_{h} y_{h} = \bar{x},\\
        \label{eq:MCP:trivial:convex}   & \sum_{h} z_{h} = 1,\\
        \label{eq:MCP:trivial:constr1}  & A y_{h} = z_{h} b, & \forall h,\\
        \label{eq:MCP:trivial:domain}    & (y_{h}, z_{h}) \in \K \times \R_{+}, & \forall h,\\
        \label{eq:MCP:trivial:disj}     & D_{h} y_{h} + \eta \sigma_{h} \Kgeq{\Q_{h}} z_{h} d_{h}, & \forall h,\\
        \label{eq:MCP:trivial:nonneg}   & \eta \geq 0.
    \end{align}
    \end{subequations}
    
    \begin{theorem}
    \label{thm:MCP:trivial:feasibility}
        The MCP \eqref{eq:MCP:trivial} is
        \begin{enumerate}
            \item strongly infeasible if and only if $\bar{x}$ is infeasible for $(CP)$;
            \item strongly feasible if and only if $\bar{x}$ is strongly feasible for $(CP)$;
            \item weakly feasible if and only if $\bar{x}$ is weakly feasible for $(CP)$.
        \end{enumerate}
    \end{theorem}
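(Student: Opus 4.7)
The three statements will be established using the structure of \eqref{eq:MCP:trivial}: the trivial normalization only perturbs the disjunctive slack via $\eta \sigma_h$, while the cone constraints $y_h \in \K$ remain intact. This ties feasibility of the MCP tightly to membership $\bar{x} \in \K$ and allows the three cases to be handled in a uniform way through the CGCP--MCP dual pair.

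For part 1, I would first observe that any feasible MCP point $(y,z,\eta)$ satisfies $\bar{x} = \sum_h y_h \in \K$, since $\K$ is closed under non-negative sums; hence infeasibility of the MCP implies $\bar{x} \notin \K$. Conversely, if $\bar{x} \in \K$, the choice $y_h = \bar{x}/H$, $z_h = 1/H$, with $\eta > 0$ large enough to dominate $d_h - D_h y_h$ in each $\Q_h$ (using $\sigma_h \in \intr \Q_h$), yields a feasible MCP solution, so MCP infeasibility and the condition $\bar{x} \notin \K$ are equivalent. To promote infeasibility to \emph{strong} infeasibility, I would exhibit the required dual certificate via the CGCP: if $\bar{x} \notin \K$, the separating hyperplane theorem provides $\mu \in \K^{*}$ with $\mu^{T}\bar{x} < 0$, and then $\alpha = \mu$, $\beta = 0$, $u_h = 0$, $\lambda_h = \mu$, $v_h = 0$ is feasible for the CGCP \eqref{eq:CGCP:trivial}. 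Crucially, the trivial normalization is satisfied for free because $v_h = 0$, so the point is in fact a recession ray of the CGCP, along which the objective $\mu^{T}\bar{x}$ scales to $-\infty$. Since the CGCP is strongly feasible (Section~\ref{sec:separation:CGCP}), conic duality (Theorem~\ref{thm:conic duality}) converts this unbounded ray into a strong infeasibility certificate for the MCP.

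For part 2, the $(\Leftarrow)$ direction reuses the splitting $y_h = \bar{x}/H$, $z_h = 1/H$: strong feasibility of $\bar{x}$ for $(CP)$ means that $\bar{x}$ lies in the interior of each non-polyhedral factor of $\K$, a property inherited by $y_h$; taking $\eta$ large then forces $D_h y_h + \eta \sigma_h - z_h d_h \Kgt{\Q_h} 0$ in every disjunct, and the resulting $(y,z,\eta)$ is strongly feasible for \eqref{eq:MCP:trivial}. For $(\Rightarrow)$, a strongly feasible MCP point has each $y_h$ in the interior of every non-polyhedral component of $\K$; summing and using that the interior of a convex cone is closed under addition gives $\bar{x} \in \K$ with interior membership along the non-polyhedral components, i.e., strong feasibility for $(CP)$. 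Part 3 then follows combinatorially: parts 1 and 2 together show that MCP feasibility matches $\bar{x} \in \K$ and MCP strong feasibility matches $\bar{x}$ being strongly feasible, so the residual ``feasible but not strongly feasible'' case matches on both sides. The main obstacle is the dual certificate in part 1; that argument relies on being able to set $v_h = 0$ without violating the normalization, which is precisely the defining feature of the trivial normalization and explains why this clean equivalence will fail for the $\alpha$-, polar-, and standard normalizations analyzed earlier.
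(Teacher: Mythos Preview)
Your proposal is correct and follows essentially the same approach as the paper: construct a separating $\mu \in \K^{*}$ with $\mu^{T}\bar{x} < 0$ to exhibit an improving CGCP ray with $v_h = 0$ (part~1), use the uniform split $y_h = \bar{x}/H$, $z_h = 1/H$ with $\eta$ large (part~2), and derive part~3 by exclusion. The paper organizes things slightly differently---it invokes the trichotomy upfront and proves only the sufficient directions---but the underlying constructions are identical.
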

    \begin{proof}
        Since $\bar{x}$ is either infeasible, strongly feasible or weakly feasible for $(CP)$, and that these are mutually exclusive alternatives, it suffices to prove that the above conditions are sufficient;  necessity follows immediately by contraposition.
        For simplicity, we assume that $\K$ is an irreducible, non-polyhedral cone.
        The general case is treated similarly.\\
        
        \textbf{1.} Assume $\bar{x} \notin \C$, i.e., $\bar{x} \notin \K$ since we assumed that $A\bar{x} = b$.
        Consequently, there exists $s \in \K^{*}$ such that $s^{T}\bar{x} < 0$ and, since $\K$ is a proper cone, we can assume without loss of generality that $s \in \intr \K^{*}$.
        Thus, setting
        \begin{align*}
            (\alpha, \beta) &= (s, 0)\\
            (u_{h}, \lambda_{h}, v_{h}) &= (0, s, 0), &\forall h,
        \end{align*}
        yields a strongly feasible solution of the CGCP \eqref{eq:CGCP:trivial}, which is also an unbounded ray.
        Therefore, the MCP is strongly infeasible.
        
        \textbf{2, 3.}
        We first show that, if $\bar{x}$ is feasible for $(CP)$, then the MCP is feasible.
        Let
        \begin{align*}
            \tilde{y}_{h} &= H^{-1} \bar{x}, & \forall h,\\
            \tilde{z}_{h} &= H^{-1},         & \forall h.
        \end{align*}
        It is then immediate that $A \tilde{y}_{h} = \tilde{z}_{h} b$, $\tilde{y}_{h} \in \K$, and $\sum_{h} \tilde{z}_{h} = 1$.
        Then, since $\sigma_{h} \in \intr \Q_{h}$, there exists $\eta_{h} \geq 0$ such that \begin{align*}
            D_{h} y_{h} + \eta_{h} \sigma_{h} \Kgt{\Q_{h}} z_{h} d_{h}.
        \end{align*}
        Letting $\tilde{\eta} = \max_{h} \{ \eta_{h}\}$, it follows that $(\tilde{y}, \tilde{z}, \tilde{\eta})$ is feasible for the MCP.\\
        
        
        If $\bar{x} \in \intr \K$, then we also have $\tilde{y}_{h} \in \intr \K$, and thus $(\tilde{y}, \tilde{z}, \tilde{\eta})$ is strongly feasible for the MCP, which proves \textbf{2.}
        Reciprocally, let $(y, z, \eta)$ be a strongly feasible solution of MCP.
        In particular, we have $y_{h} \in \intr \K$.
        Then,
        \begin{align*}
            \bar{x} = \sum_{h} y_{h} \in \intr \K,
        \end{align*}
        i.e., $\bar{x}$ is strongly feasible for $(CP)$, thereby proving \textbf{3.} by contraposition.
        \qed
    \end{proof}

    Theorem \ref{thm:MCP:trivial:feasibility} motivates the following remarks.
    First, case \textbf{1.} typically arises in the context of outer-approximation algorithms, wherein fractional points generally violate non-linear conic constraints.
    Then, the CGCP \eqref{eq:CGCP:trivial} is unbounded, and the normalization \eqref{eq:CGCP:trivial:normalization} imposes $\knorm{\sigma_{h}}{v_{h}} = 0, \forall h$ in any unbounded ray, i.e., $v_{h} = 0$ and the obtained cut is always a trivial inequality.
    Thus, the trivial normalization is not suited for use within outer approximation-based algorithms.
    
    Second, conic-infeasible points are not encountered in non-linear branch-and-bound algorithms.
    However, solving $(CP)$ yields a fractional point $\bar{x} \in \partial \C$ that is weakly feasible, unless all non-linear conic constraints are inactive at the optimum.
    Although the current fractional point $\bar{x}$ may become strongly feasible after several rounds of cuts, or deeper in the branch-and-bound tree, our experience is that case \textbf{2.} rarely occurs in practice.
    
    Third, case \textbf{3.} corresponds to the setting of Example \ref{ex:normalization}.
    Split cuts obtained with the trivial normalization are displayed in Figure \ref{fig:ex:trivial}, and the corresponding CGCP statistics are reported in Table \ref{tab:CGCP:trivial}.
    Remarkably, all three cuts appear to be $\K^{*}$ cuts, while numerical issues, namely, slow convergence, are systematically encountered.
    Here, the MCP \eqref{eq:MCP:trivial} is weakly feasible and the CGCP \eqref{eq:CGCP:trivial} is bounded but not solvable.
    Thus, there exists a diverging sequence of close-to-optimal solution, thereby causing slow convergence.
    In addition, since \eqref{eq:CGCP:trivial:normalization} bounds the value of $v_{1}, v_{2}$, the iterates become equivalent to a $\K^{*}$ cut as the magnitude of $\lambda$ increases, which explains the cuts obtained in Figure \ref{fig:ex:trivial}.
    
    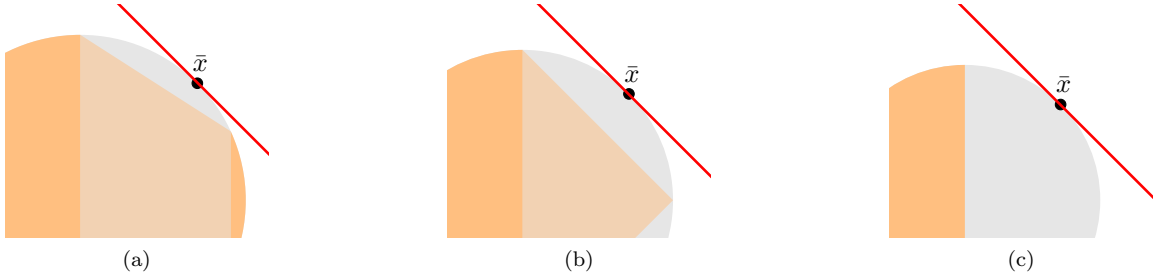
\begin{figure}
        \centering
        \subfloat[]{
        \label{fig:ex:trivial:well-posed}
        \begin{tikzpicture}[scale=2]
            \clip(-0.5, -0.25) rectangle (1.25, 1.30);
            
            \drawContRelax{1.1}             
            \drawSplitHull{0.0}{1.0}{1.1}   
            \drawXbar{0.778}{0.778}         
            
            \draw [line width=1pt, color=red] (-0.50, 2.06) -- (1.50, 0.06);
            
        \end{tikzpicture}
        }
        \hfill
        \subfloat[]{
        \label{fig:ex:trivial:ill-posed}
        \begin{tikzpicture}[scale=2]
            \clip(-0.5, -0.25) rectangle (1.25, 1.30);
            
            \drawContRelax{1.0}             
            \drawSplitHull{0.0}{1.0}{1.0}   
            \drawXbar{0.707}{0.707}         
            
            \draw [line width=1pt, color=red] (-0.50, 1.91) -- (1.50, -0.09);
            
        \end{tikzpicture}
        }
        \hfill
        \subfloat[]{
        \label{fig:ex:trivial:infeasible}
        \begin{tikzpicture}[scale=2]
            \clip(-0.5, -0.25) rectangle (1.25, 1.30);
            
            \drawContRelax{0.9}             
            \fill[orange!50!white] (90:0.9) arc (90:270:0.9);  
            
            \drawXbar{0.6364}{0.6364}  
            \draw [line width=1pt, color=red] (-0.50, 1.77) -- (1.50, -0.23);
            
        \end{tikzpicture}
        }
        
        \caption{%
            Split cuts (in red) obtained with the trivial normalization.
        }
        \label{fig:ex:trivial}
    \end{figure}
    
    \begin{table}
        \centering
        \caption{CGCP statistics for Example \ref{ex:normalization} and trivial normalization}
        \label{tab:CGCP:trivial}
        \begin{tabular}{ccrrrrrrrr}
        \toprule
            & Iter & $\| \alpha \|$ & $\| u_{1}\|$ & $\| u_{2}\|$ & $\|\lambda_{1}\|$ & $\|\lambda_{2}\|$ & $\| v_{1}\|$ & $\| v_{2}\|$ \\
        \midrule
            \ref{ex:enum:well-posed}  & $37^{*}$ &  6189.7 & $     0.0 $ & $     0.7 $ &  6189.6 &  6190.6 &     0.2 &     0.8\\
            \ref{ex:enum:ill-posed}  & $49^{*}$ &  6814.2 & $     0.0 $ & $     0.7 $ &  6814.0 &  6815.0 &     0.3 &     0.7\\
            \ref{ex:enum:infeasible}  & $72^{*}$ &  6364.4 & $     0.0 $ & $     0.7 $ &  6364.2 &  6365.2 &     0.4 &     0.6\\
        \bottomrule
        \end{tabular}\\
        $^{*}$: slow progress
    \end{table}
    
    Note that the CGCP \eqref{eq:CGCP:trivial} may be solvable even though $\bar{x}$ is weakly feasible.
    However, our experience suggests that this rarely happens, and that most cases are similar to Example \ref{ex:normalization}, leading to numerical issues and weak cuts.

\subsection{Uniform normalization}
\label{sec:normalization:uniform}

    The uniform normalization is obtained by setting $\sigma_{h} = 0$ in the standard normalization.
    
    The CGCP then writes
    \begin{subequations}
    \label{eq:CGCP:uniform}
    \begin{align}
        \label{eq:CGCP:uniform:obj}
        \min_{\alpha, \beta, u, v, \lambda} \ \ \ & \alpha^{T} \bar{x} - \beta\\
        s.t. \ \ \ 
        & \label{eq:CGCP:uniform:f1} \alpha = A^{T}u_{h} + \lambda_{h} + D_{h}^{T} v_{h}, && \forall h,\\
        & \label{eq:CGCP:uniform:f2} \beta  \leq b^{T}u_{h} + d_{h}^{T}v_{h}, && \forall h,\\
        & \label{eq:CGCP:uniform:domain} (u_{h}, \lambda_{h}, v_{h}) \in \R^{m} \times \K^{*} \times \Q_{h}^{*}, && \forall h,\\
        & \label{eq:CGCP:uniform:normalization} \sum_{h} \knorm{\rho}{\lambda_{h}}  \leq 1,
    \end{align}
    \end{subequations}
    and, up to a change of sign in the objective, the MCP is
    \begin{subequations}
    \label{eq:MCP:uniform}
    \begin{align}
        \label{eq:MCP:uniform:obj}
        \min_{y, z, \eta} \ \ \ & \eta\\
        s.t. \ \ \
        \label{eq:MCP:uniform:friends}  & \sum_{h} y_{h} = \bar{x},\\
        \label{eq:MCP:uniform:convex}   & \sum_{h} z_{h} = 1,\\
        \label{eq:MCP:uniform:equality}  & A y_{h} = z_{h} b, & \forall h,\\
        \label{eq:MCP:uniform:domain}    & (y_{h} + \eta \rho, z_{h}) \in \K \times \R_{+}, & \forall h,\\
        \label{eq:MCP:uniform:disj}     & D_{h} y_{h} \Kgeq{\Q_{h}} z_{h} d_{h}, & \forall h,\\
        \label{eq:MCP:uniform:nonneg}   & \eta \geq 0.
    \end{align}
    \end{subequations}

    As illustrated by Example \ref{ex:MCP:uniform}, in general, the MCP \eqref{eq:MCP:uniform} may not be feasible.
    
    \begin{example}
    \label{ex:MCP:uniform}
        Let
        \begin{align*}
            \C = \left\{ (x_{1}, x_{2}) \in \R_{+}^{2} \ | \ x_{1} + x_{2} = 1 \right\},
        \end{align*}
        and consider the disjunction
        \begin{align*}
            \left\{
                x_{1} \geq 0, -x_{1} \geq 0
            \right\}
            \vee
            \left\{
                x_{1} \geq 1, -x_{1} \geq -1
            \right\}.
        \end{align*}
        Constraints \eqref{eq:MCP:uniform:equality} and \eqref{eq:MCP:uniform:disj} first yield
        \begin{align*}
            y_{1} = \begin{pmatrix} 0\\ z_{1} \end{pmatrix}, \ \ \
            y_{2} &= \begin{pmatrix} z_{2}\\ 1 - z_{2} \end{pmatrix},
        \end{align*}
        which, combined with \eqref{eq:MCP:uniform:convex} and \eqref{eq:MCP:uniform:friends},
        yields $\bar{x} = (1-z_{1}, z_{1})$ for $0 \leq z_{1} \leq 1$.
        Therefore, if $\bar{x} = (-1, 2)$, then the MCP \eqref{eq:MCP:uniform} is infeasible.
    \end{example}
    
    Nevertheless, the following results demonstrate that, for certain classes of disjunctions, namely split disjunctions, strong feasibility in the MCP is guaranteed.
    
    \begin{lemma}
    \label{lem:MCP:uniform:feas}
        If
        \begin{align*}
            \bar{x} \in \conv \left(
                \bigcup_{h} \left\{ x \ \middle| \ Ax = b, D_{h} x \Kgeq{\Q_{h}} d_{h} \right\}
            \right),
        \end{align*}
        then the MCP \eqref{eq:MCP:uniform}  is feasible.
    \end{lemma}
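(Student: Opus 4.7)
The plan is a direct construction: unpack the convex-hull membership of $\bar{x}$, assemble the components $(y_h, z_h)$ out of it, and then absorb the missing conic-feasibility $y_h \in \K$ into a sufficiently large slack $\eta \geq 0$ via the interior-point direction $\rho$.

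First, let $S_h = \{ x \in \R^n \mid A x = b, \ D_h x \Kgeq{\Q_h} d_h \}$, which is closed and convex for every $h$. By assumption, $\bar{x} \in \conv\bigl( \bigcup_h S_h \bigr)$, so by Caratheodory there exist finitely many points $p_1, \dots, p_k$, indices $h_1, \dots, h_k \in \{1, \dots, H\}$ with $p_i \in S_{h_i}$, and coefficients $\mu_i \geq 0$ with $\sum_i \mu_i = 1$, such that $\bar{x} = \sum_i \mu_i p_i$. For each $h$, set $z_h = \sum_{i : h_i = h} \mu_i$ and $y_h = \sum_{i : h_i = h} \mu_i p_i$, with the convention that empty sums are $0$. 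Convexity of $S_h$ implies that whenever $z_h > 0$, the point $z_h^{-1} y_h$ lies in $S_h$, so that $A y_h = z_h b$ and $D_h y_h \Kgeq{\Q_h} z_h d_h$. When $z_h = 0$, both $y_h$ and $z_h b$ and $z_h d_h$ vanish and these relations hold trivially. By construction $\sum_h y_h = \bar{x}$ and $\sum_h z_h = 1$, so constraints \eqref{eq:MCP:uniform:friends}--\eqref{eq:MCP:uniform:equality}, \eqref{eq:MCP:uniform:disj} and the non-negativity of $z$ are satisfied.

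It remains to exhibit $\eta \geq 0$ such that $y_h + \eta \rho \in \K$ for every $h$, i.e., constraint \eqref{eq:MCP:uniform:domain}. Since $\rho \in \intr \K$, there is $\epsilon > 0$ with the ball $\{ w \mid \|w - \rho\|_2 \leq \epsilon \}$ contained in $\K$. For each $h$, picking $\eta_h \geq \|y_h\|_2 / \epsilon$ yields $\rho + y_h / \eta_h \in \K$, hence $y_h + \eta_h \rho = \eta_h (\rho + y_h/\eta_h) \in \K$ by the cone property. Taking $\eta = \max_h \eta_h$ then makes $(y, z, \eta)$ feasible for the MCP \eqref{eq:MCP:uniform}.

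There is really no hard step here: convexity of each $S_h$ handles the aggregation cleanly and the $z_h = 0$ terms contribute $y_h = 0$, which is consistent with $A y_h = 0 = z_h b$ and $D_h y_h = 0 \Kgeq{\Q_h} 0 = z_h d_h$. The only subtlety worth flagging is precisely this bookkeeping: one should not attempt to write $x_h = z_h^{-1} y_h$ when $z_h = 0$, and one should exploit the convexity of $S_h$ (rather than the definition of the convex hull as a set of convex combinations of single points, one per term) to collapse multiple contributions with the same index $h$ into a single pair $(y_h, z_h)$. The interior-point vector $\rho$ takes care of the rest.
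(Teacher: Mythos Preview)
Your proof is correct and follows essentially the same route as the paper: the paper simply invokes Theorem~\ref{thm:ConvexHull} to obtain the $(y_h, z_h)$ satisfying \eqref{eq:MCP:uniform:friends}--\eqref{eq:MCP:uniform:equality} and \eqref{eq:MCP:uniform:disj}, with the choice of a large enough $\eta$ via $\rho \in \intr\K$ left implicit. Your direct Carath\'eodory-based construction spells out exactly what that invocation of Theorem~\ref{thm:ConvexHull} entails.
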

    \begin{proof}
        Immediate from Theorem \ref{thm:ConvexHull}.
        \qed
    \end{proof}
    
    \begin{lemma}
    \label{lem:MCP:uniform:polyhedral}
        Assume that $\forall h, \ \Q_{h}$ is polyhedral.
        Then, the MCP \eqref{eq:MCP:uniform} is strongly feasible if and only if it is feasible.
    \end{lemma}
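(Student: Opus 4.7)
The plan is to exploit the definition of strong feasibility, which only requires strict satisfaction of the \emph{non-polyhedral} conic inequalities. Under the hypothesis that every $\Q_h$ is polyhedral, the only conic constraint in \eqref{eq:MCP:uniform} that could require strict satisfaction is the membership $y_h + \eta \rho \in \K$ (the constraints $z_h \geq 0$ and $\eta \geq 0$ are polyhedral, as are the equalities in \eqref{eq:MCP:uniform:friends}--\eqref{eq:MCP:uniform:equality} and, by assumption, the disjunctive inequalities \eqref{eq:MCP:uniform:disj}). If $\K$ itself is polyhedral the claim is immediate, so we may assume $\K$ is non-polyhedral.

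The forward implication is trivial. For the converse, I would start from an arbitrary feasible triple $(y, z, \eta)$ and perturb only the slack variable: set $\tilde{\eta} = \eta + t$ for some $t > 0$, and keep $(\tilde{y}, \tilde{z}) = (y, z)$. All equality constraints and the polyhedral conic constraints are unaffected, so they remain satisfied, and $\tilde{\eta} \geq 0$.

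The key step is then to verify strict membership $\tilde{y}_h + \tilde{\eta}\rho \in \intr(\K)$ for every $h$. Writing
\[
    \tilde{y}_h + \tilde{\eta}\rho = (y_h + \eta \rho) + t\rho,
\]
we have $(y_h + \eta \rho) \in \K$ by feasibility and $t \rho \in \intr(\K)$ since $\rho \in \intr(\K)$ and $t > 0$. The standard fact that, for a convex cone $\K$, the sum of a point in $\K$ and a point in $\intr(\K)$ lies in $\intr(\K)$ then yields the desired strict membership. This gives a strongly feasible solution.

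There is no serious obstacle in this argument; the only point requiring a small amount of care is to confirm that the definition of strong feasibility is insensitive to the polyhedral components, so that only the $\K$-membership needs to be promoted to strict membership, which the free slack $\eta$ allows us to do uniformly across all $h$.
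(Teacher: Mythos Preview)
Your argument is correct and is exactly the paper's approach: from a feasible $(y,z,\eta)$, replace $\eta$ by $\eta+\epsilon$ for any $\epsilon>0$ to obtain a strongly feasible point. Your write-up is simply more explicit than the paper's one-line proof, spelling out why only the $\K$-membership needs to be made strict and why adding $t\rho$ with $\rho\in\intr\K$ achieves this.
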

    \begin{proof}
        Strong feasibility implies feasibility.
        Reciprocally, if $(x, y, z, \eta)$ is feasible for the MCP \eqref{eq:MCP:uniform}, then $(x, y, z, \eta + \epsilon)$ is strongly feasible for any $\epsilon > 0$.
        \qed
    \end{proof}
    
    \begin{theorem}
    \label{thm:MCP:uniform:split}
        If $\X \neq \emptyset$ and $\D$ is a split disjunction, i.e.,
        \begin{align*}
            \D =
            \left\{
                x
                \ \middle| \ 
                \begin{array}{l}
                    A x = b, x \in \K\\
                    \pi^{T}x \leq \pi_{0}
                \end{array}
            \right\}
            \cup
            \left\{
                x
                \ \middle| \ 
                \begin{array}{l}
                    A x = b, x \in \K\\
                    \pi^{T}x \geq \pi_{0} + 1
                \end{array}
            \right\}
            ,
        \end{align*}
        then the MCP \eqref{eq:MCP:uniform} is strongly feasible.
    \end{theorem}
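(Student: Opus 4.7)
The plan is to exhibit an explicit strongly feasible point $(y, z, \eta)$ for the MCP \eqref{eq:MCP:uniform}, built from an integer-feasible point $\hat{x} \in \X$ together with $\bar{x}$ itself. First I would use $\X \neq \emptyset$ to pick some $\hat{x} \in \X$; because the split disjunction must be valid for $\X$, the point $\hat{x}$ satisfies one of the two sides, and without loss of generality $\pi^{T}\hat{x} \leq \pi_{0}$ (the other case being symmetric).

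Next I would construct $(y, z)$ by a case analysis on the location of $\pi^{T}\bar{x}$. If $\pi^{T}\bar{x} \leq \pi_{0}$, then $\bar{x}$ already sits on side 1, so I would set $z_{1} = 1$, $y_{1} = \bar{x}$, $z_{2} = 0$, $y_{2} = 0$; if $\pi^{T}\bar{x} \geq \pi_{0} + 1$, the roles of the two terms are swapped. In the interesting case $\pi_{0} < \pi^{T}\bar{x} < \pi_{0} + 1$, I would combine $\hat{x}$ with $\bar{x}$: pick $z_{1}$ strictly between $\tau := (\pi_{0} + 1 - \pi^{T}\bar{x})/(\pi_{0} + 1 - \pi^{T}\hat{x})$ and $1$, and set $y_{1} = z_{1}\hat{x}$, $z_{2} = 1 - z_{1}$, $y_{2} = \bar{x} - z_{1}\hat{x}$. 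The bounds $0 < \tau < 1$ follow from $\pi^{T}\hat{x} \leq \pi_{0} < \pi^{T}\bar{x} < \pi_{0} + 1$. The convex-combination equations \eqref{eq:MCP:uniform:friends}-\eqref{eq:MCP:uniform:convex} hold by construction, $A y_{h} = z_{h} b$ follows from $A\bar{x} = A\hat{x} = b$, and a routine calculation confirms the disjunctive inequalities $\pi^{T}y_{1} \leq z_{1}\pi_{0}$ and $\pi^{T}y_{2} \geq z_{2}(\pi_{0} + 1)$.

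Finally I would upgrade feasibility to strong feasibility. Because $\Q_{1} = \Q_{2} = \R_{+}$ and $\R_{+}$ are polyhedral, strong feasibility of \eqref{eq:MCP:uniform:disj} and of $z_{h} \geq 0$ is free from feasibility. The only non-polyhedral piece is $y_{h} + \eta \rho \in \K$. Writing $y_{h} + \eta \rho = \eta ( \rho + y_{h}/\eta )$ and using $\rho \in \intr \K$, for all $\eta$ large enough we have $\rho + y_{h}/\eta \in \intr \K$, hence $y_{h} + \eta \rho \in \intr \K$. Choosing $\eta > 0$ above this threshold for both $h$ completes the construction.

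The hard part will be the middle case: neither $\bar{x}$ nor $\hat{x}$ alone provides a feasible decomposition, and one must balance them so that both sides of the split hold simultaneously. A subtle point is that $y_{2} = \bar{x} - z_{1}\hat{x}$ need not lie in $\K$; the slack variable $\eta$, coupled with $\rho \in \intr \K$, is precisely what absorbs this potential violation. This is the structural reason why the uniform normalization --- unlike the trivial one analyzed in Section \ref{sec:nomralization:trivial} --- delivers a well-behaved MCP in the split setting.
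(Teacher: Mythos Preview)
Your proof is correct, but the construction differs from the paper's. The paper does not use an integer point $\hat{x} \in \X$; instead it picks a direction $\xi \in \ker(A)$ with $\pi^{T}\xi \neq 0$ and writes $\bar{x} = \tfrac{1}{2}(\bar{x}-t\xi) + \tfrac{1}{2}(\bar{x}+t\xi)$, choosing $t$ large enough that the two summands land on opposite sides of the split. The hypothesis $\X \neq \emptyset$ is invoked only in the degenerate case where no such $\xi$ exists (then $\pi^{T}x$ is constant on $\{Ax=b\}$, forcing $\bar{x}$ to satisfy one disjunct already). Feasibility is then fed through Lemmas~\ref{lem:MCP:uniform:feas} and~\ref{lem:MCP:uniform:polyhedral} to obtain strong feasibility.

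Your route trades the kernel-direction trick for a concrete anchor $\hat{x}$, which makes the argument entirely self-contained and avoids the separate degenerate case, at the price of a three-way case split on $\pi^{T}\bar{x}$ and an asymmetric decomposition. The paper's route is a bit cleaner (symmetric weights $z_{1}=z_{2}=\tfrac{1}{2}$, and the strong-feasibility upgrade is isolated in a reusable lemma), but both reach the same conclusion with comparable effort. Your final step---pushing $\eta$ large so that $y_{h}+\eta\rho \in \intr \K$---is exactly the content of Lemma~\ref{lem:MCP:uniform:polyhedral}, done inline.
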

    \begin{proof}
        Let
        \begin{align*}
            \mathcal{S} = \conv \left(
            \left\{ x \ \middle| \ Ax = b, \pi^{T}x \leq \pi_{0} \right\}
            \cup \left\{ x \ \middle| \ Ax = b, \pi^{T}x \geq \pi_{0} + 1 \right\}
            \right).
        \end{align*}
        
        First, assume there exists $\xi \in \R^{n}$ such that $A \xi = 0$ and $\pi^{T}\xi \neq 0$; without loss of generality, we can assume that $\pi^{T}\xi = 1$.
        Then, let $t \geq 0$ such that
        \begin{align*}
            \pi^{T} (\bar{x} - t \xi) &\leq \pi_{0},\\
            \pi^{T} (\bar{x} + t \xi) &\geq \pi_{0} + 1.
        \end{align*}
        In particular, $A (\bar{x} \pm t \xi) = b$ and $\bar{x} = \frac{1}{2}(\bar{x} + t \xi) + \frac{1}{2}(\bar{x} - t \xi)$.
        Thus,  $\bar{x} \in \mathcal{S}$, and it follows from Lemma \ref{lem:MCP:uniform:feas} that the MCP \eqref{eq:MCP:uniform} is feasible.
        
        Now assume that $\forall \xi \in \ker(A), \pi^{T}\xi = 0$.
        On the one hand, if $\pi_{0} < \pi^{T}\bar{x} < \pi_{0} + 1$, then $\mathcal{S} = \emptyset$ and thus $\X$ is empty, which would contradict the $\X \neq \emptyset$ assumption.
        Thus, either $\pi^{T}\bar{x} \leq \pi_{0}$ or $\pi^{T}\bar{x} \geq \pi_{0} +1$, i.e., $\bar{x} \in \mathcal{S}$ and the MCP is feasible by Lemma \ref{lem:MCP:uniform:feas}.
        Note that this latter case would never occur in practice, since one would always consider a split such that $\pi_{0} < \pi^{T}\bar{x} < \pi_{0} + 1$.
        
        The result then follows from Lemma \ref{lem:MCP:uniform:polyhedral}.
        \qed
    \end{proof}
    
    Similar to the standard normalization, a consequence of Theorem \ref{thm:MCP:uniform:split} is that, when considering split disjunctions, the CGCP \eqref{eq:CGCP:uniform} and the MCP \eqref{eq:MCP:uniform} are solvable with identical objective value.
    This is confirmed by the results of Table \ref{tab:CGCP:polar}, which reports statistics for the CGCP in Example \ref{ex:normalization}: no numerical issue is encountered.
    The corresponding split cuts are displayed in Figure \ref{fig:ex:uniform}, and are similar to the cuts obtained with the standard normalization.
    Likewise, cuts obtained with the uniform normalization are not, in general, supporting hyperplanes of the disjunctive hull.
    
    \begin{figure}
        \centering
        \subfloat[]{
        \label{fig:ex:uniform:well-posed}
        \begin{tikzpicture}[scale=2]
            \clip(-0.5, -0.25) rectangle (1.25, 1.30);
            
            \drawContRelax{1.1}             
            \drawSplitHull{0.0}{1.0}{1.1}   
            \drawXbar{0.778}{0.778}         
            
            \draw [line width=1pt, color=red] (-0.50, 1.41) -- (1.50, 0.16);
            
        \end{tikzpicture}
        }
        \hfill
        \subfloat[]{
        \label{fig:ex:uniform:ill-posed}
        \begin{tikzpicture}[scale=2]
            \clip(-0.5, -0.25) rectangle (1.25, 1.30);
            
            \drawContRelax{1.0}             
            \drawSplitHull{0.0}{1.0}{1.0}   
            \drawXbar{0.707}{0.707}         
            
            \draw [line width=1pt, color=red] (-0.50, 1.39) -- (1.50, -0.16);
            
        \end{tikzpicture}
        }
        \hfill
        \subfloat[]{
        \label{fig:ex:uniform:infeasible}
        \begin{tikzpicture}[scale=2]
            \clip(-0.5, -0.25) rectangle (1.25, 1.30);
            
            \drawContRelax{0.9}             
            \fill[orange!50!white] (90:0.9) arc (90:270:0.9);  
            
            \drawXbar{0.6364}{0.6364}  
            \draw [line width=1pt, color=red] (-0.50, 1.43) -- (1.50, -0.68);
            
        \end{tikzpicture}
        }
        
        \caption{%
            Split cuts obtained with the uniform normalization.
            The continuous relaxation is in gray, the split hull in orange, and the obtained cut is in red.
        }
        \label{fig:ex:uniform}
    \end{figure}
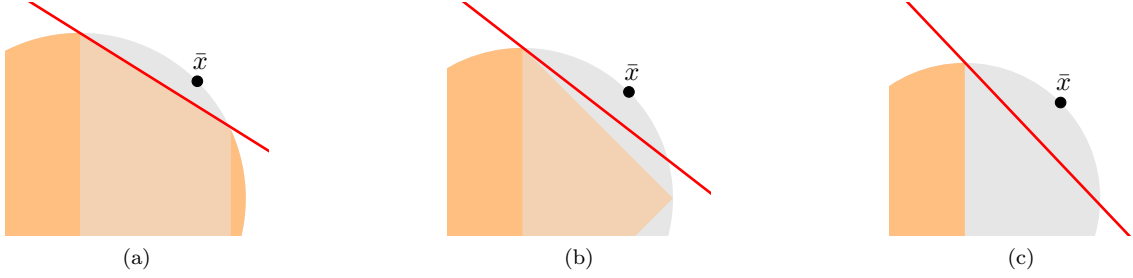
    
    \begin{table}
        \centering
        \caption{CGCP statistics for Example \ref{ex:normalization} and uniform normalization}
        \label{tab:CGCP:uniform}
        \begin{tabular}{ccrrrrrrrr}
        \toprule
            & Iter & $\| \alpha \|$ & $\| u_{1}\|$ & $\| u_{2}\|$ & $\|\lambda_{1}\|$ & $\|\lambda_{2}\|$ & $\| v_{1}\|$ & $\| v_{2}\|$ \\
        \midrule
            \ref{ex:enum:well-posed}  & $ 8$ &     0.5 & $     0.0 $ & $     0.3 $ &     0.5 &     0.9 &     0.2 &     0.3\\
            \ref{ex:enum:ill-posed}  & $ 8$ &     0.5 & $     0.0 $ & $     0.4 $ &     0.4 &     1.0 &     0.2 &     0.4\\
            \ref{ex:enum:infeasible}  & $ 8$ &     0.4 & $     0.0 $ & $     0.5 $ &     0.3 &     1.1 &     0.3 &     0.5\\
        \bottomrule
        \end{tabular}
    \end{table}

\section{Separating conic-infeasible points}
\label{sec:infeas}

When $(MICP)$ \eqref{eq:MICP} is solved by outer-approximation, the fractional point $\bar{x}$ may not satisfy all conic constraints.
In preliminary experiments, wherein lift-and-project cuts were separated by rounds in a callback, a large proportion --often higher than $90 \%$-- of the cuts yielded by the CGCP turned out to be $\K^{*}$ cuts, which is obviously detrimental to performance.
To the best of our knowledge, despite the popularity and performance of outer-approximation algorithms, this behavior has not been studied in the literature.
Therefore, in this section, we will assume that $A\bar{x} = b$, but $\bar{x} \notin \K$.

Let $(\alpha, \beta, u, \lambda, v)$ be a solution of the CGCP, and assume that $v_{h} = 0$ for some $h \in \{1, ..., H\}$.
Thus, we have
\begin{align}
    \alpha & = A^{T}u_{h} + \lambda_{h},\\
    \beta  & \leq b^{T}u_{h},
\end{align}
and $\alpha^{T}x \geq \beta$ is a trivial inequality.
In addition, as noted in Section \ref{sec:separation:CGCP}, the inequality $\lambda_{h}^{T}x \geq 0$ has the same violation, and cuts off the same portion of the continuous relaxation as $\alpha^{T}x \geq \beta$.
This observation, which does not depend on the normalization condition, allows the \emph{a posteriori} detection of $\K^{*}$ cuts, by checking the value of the $v$ multipliers.

Once a $\K^{*}$ cut is identified, it can be disaggregated.
Assume that $\K = \K_{1} \times ... \times \K_{N}$; correspondingly, for $\lambda \in \K^{*}$, we write $\lambda = (\lambda_{1}, ..., \lambda_{N})$ where each $\lambda_{i} \in \K^{*}_{i}$.
Then, the $\K^{*}$ cut $\lambda^{T}x \geq 0$ is disaggregated as
\begin{align}
    \lambda_{i}^{T}x_{i} \geq 0, \ \ \ i = 1, ..., N.
\end{align}
This yields more numerous, but sparser, $\K^{*}$ cuts, and results in tighter polyhedral approximations which, in turn, improves the performance of outer-approximation algorithms \cite{coey2018outer}.

We now derive sufficient conditions that provide an \emph{a priori} indication that a $\K^{*}$ cut will be generated.
Unless stated otherwise, we only consider the CGCP with standard normalization.
Define
\begin{align}
    \label{eq:infeasible:etabar}
    \bar{\eta} &= \min_{\eta \geq 0} \left\{ \eta \ \middle| \ \bar{x} + \eta \rho \in \K \right\},\\
    \label{eq:infeasible:taubar}
    \bar{\tau}_{h} &= \min_{\tau \geq 0} \left\{ \tau \ \middle| \ D_{h} \bar{x} + \tau \sigma_{h} \Kgeq{\Q_{h}} d_{h} \right\}, & \forall h,
\end{align}
and let $\xi = \bar{x} + \bar{\eta} \rho $.

\begin{lemma}
    Assume there exists an optimal solution of the CGCP for which $v_{h} = 0, \forall h$.
    Then, the optimal value of the CGCP is $-H^{-1} \bar{\eta}$, and there exist a CGCP-optimal solution of the form $(\lambda_{0}, 0, 0, \lambda_{0}, 0)$,
    with $\lambda_{0}^{T}\xi = 0$.
\end{lemma}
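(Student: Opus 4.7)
The plan is to solve the CGCP explicitly under the restriction $v_{h} = 0$ for all $h$, by first reducing the objective to a simple expression in $\lambda$ using the equality $A\bar{x} = b$, and then by minimizing this expression by a convexity/averaging argument. The bound $-\bar{\eta}/H$ will emerge naturally from the decomposition $\bar{x} = \xi - \bar{\eta}\rho$ with $\xi \in \partial\K$.

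First, I would substitute $v_{h} = 0$ into \eqref{eq:CGCP:standard}. Without loss of generality, $\beta = \min_{h} b^{T} u_{h}$ at optimum. Using $\alpha = A^{T} u_{h} + \lambda_{h}$ together with $A \bar{x} = b$, a direct computation yields $\alpha^{T}\bar{x} - b^{T}u_{h} = \lambda_{h}^{T}\bar{x}$ for every $h$, so the objective simplifies to
\begin{align*}
    \alpha^{T}\bar{x} - \beta = \max_{h} \lambda_{h}^{T}\bar{x},
\end{align*}
subject only to $\lambda_{h} \in \K^{*}$ and $\sum_{h} \knorm{\rho}{\lambda_{h}} \leq 1$ (the multipliers $u_{h}$ can be chosen freely, so the coupling $\alpha - A^{T}u_{h} = \lambda_{h}$ places no additional restriction on the $\lambda_{h}$).

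Next, I would obtain the lower bound by averaging and by exploiting $\xi$. Setting $\Lambda = \sum_{h}\lambda_{h} \in \K^{*}$,
\begin{align*}
    \max_{h} \lambda_{h}^{T}\bar{x} \geq \tfrac{1}{H}\sum_{h} \lambda_{h}^{T}\bar{x} = \tfrac{1}{H}\Lambda^{T}\bar{x}.
\end{align*}
Because $\knorm{\rho}{\Lambda} = \rho^{T}\Lambda = \sum_{h}\knorm{\rho}{\lambda_{h}} \leq 1$, and because $\bar{x} = \xi - \bar{\eta}\rho$ with $\xi \in \K$, we get $\Lambda^{T}\bar{x} = \Lambda^{T}\xi - \bar{\eta}\rho^{T}\Lambda \geq -\bar{\eta}$, hence $\max_{h}\lambda_{h}^{T}\bar{x} \geq -\bar{\eta}/H$.

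Finally, I would exhibit a matching optimal solution. Since $\bar{x}\notin \K$, $\bar{\eta} > 0$ and $\xi = \bar{x} + \bar{\eta}\rho$ lies on $\partial \K$; by the supporting-hyperplane property of proper cones, there exists $\lambda_{0} \in \K^{*}\setminus\{0\}$ with $\lambda_{0}^{T}\xi = 0$, which we rescale so that $\rho^{T}\lambda_{0} = 1/H$. Setting $(\alpha, \beta, u_{h}, \lambda_{h}, v_{h}) = (\lambda_{0}, 0, 0, \lambda_{0}, 0)$ trivially satisfies \eqref{eq:CGCP:standard:f1}--\eqref{eq:CGCP:standard:normalization}, with objective value $\lambda_{0}^{T}\bar{x} = \lambda_{0}^{T}\xi - \bar{\eta}\,\rho^{T}\lambda_{0} = -\bar{\eta}/H$. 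The main subtlety is the existence of such a $\lambda_{0}$: it relies on $\xi \in \partial \K$ and on standard separation arguments for proper cones. The edge case $\xi = 0$ (which happens when $\bar{x}$ is a negative multiple of $\rho$) is trivial, as then $\lambda_{0}^{T}\xi = 0$ for any $\lambda_{0} \in \K^{*}$.
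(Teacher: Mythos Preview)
Your proof is correct, and it takes a somewhat different route from the paper's. Both arguments begin with an averaging step, but they diverge at the endgame. The paper replaces the given optimal solution by the symmetrized solution $(\alpha,\beta,\bar u,\bar\lambda,0)$ with $\bar u=\tfrac1H\sum_h u_h$ and $\bar\lambda=\tfrac1H\sum_h\lambda_h$, reduces to the single-variable problem $\min\{\lambda^{T}\bar x:\rho^{T}\lambda\le 1/H,\ \lambda\in\K^{*}\}$, and then computes the optimal value $-H^{-1}\bar\eta$ by taking the conic dual and recognizing \eqref{eq:infeasible:etabar}; the form $(\lambda_{0},0,0,\lambda_{0},0)$ and the condition $\lambda_{0}^{T}\xi=0$ are implicit via complementary slackness. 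You instead use the decomposition $\bar x=\xi-\bar\eta\rho$ directly: the lower bound $\Lambda^{T}\bar x\ge -\bar\eta$ drops out of $\Lambda^{T}\xi\ge 0$ and $\rho^{T}\Lambda\le 1$, and the matching solution is built by hand from a supporting functional at $\xi\in\partial\K$. Your approach is more elementary (no appeal to conic strong duality) and makes the condition $\lambda_{0}^{T}\xi=0$ explicit rather than a by-product.

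One small remark: your parenthetical claim that the coupling $\alpha=A^{T}u_{h}+\lambda_{h}$ ``places no additional restriction on the $\lambda_{h}$'' is not literally correct---an arbitrary tuple $(\lambda_{h})_{h}$ is feasible only if all pairwise differences $\lambda_{h}-\lambda_{h'}$ lie in the range of $A^{T}$. Fortunately, your argument never actually uses this claim: the lower bound comes from the averaging inequality $\max_{h}\lambda_{h}^{T}\bar x\ge \tfrac1H\Lambda^{T}\bar x$, valid for any feasible point, and the upper bound from an explicit solution with all $\lambda_{h}$ equal, for which the coupling is automatic.
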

\begin{proof}
    Let $(\alpha, \beta, u, \lambda, 0)$ be such an optimal solution, and denote by $\delta$ its objective value.
    In particular, we have
    \begin{align*}
        \alpha &= A^{T} u_{h} + \lambda_{h}, && \forall h,\\
        \beta &\leq b^{T}u_{h}, && \forall h.
    \end{align*}
    Let $\bar{u} = \frac{1}{H} \sum_{h} u_{h}$ and $\bar{\lambda} = \frac{1}{H} \sum_{h} \lambda_{h}$.
    It follows that $(\alpha, \beta, \bar{u}, \bar{\lambda}, 0)$ is feasible with objective value $\delta$, i.e., it is an optimal solution of the CGCP.
   
    Next, $(\alpha, \beta, \bar{u}, \bar{\lambda}, 0)$ is also an optimal solution of
    \begin{align*}
        \min_{u, \lambda} \ \ \ \ & \alpha^{T}\bar{x} - \beta\\
        s.t. \ \ \ 
        & \alpha = A^{T} u_{h} + \lambda_{h}, && \forall h,\\
        & \beta \leq b^{T} u_{h}, && \forall h,\\
        & \sum_{h} \knorm{\rho}{\lambda_{h}} \leq 1,\\
        & \lambda_{h} \in \K^{*}.
    \end{align*}
    Eliminating $\alpha, \beta$ yields
    \begin{align*}
        \min_{\lambda} \ \ \ \ & \lambda^{T} \bar{x}\\
        s.t. \ \ \ 
        &  \rho^{T} \lambda \leq \frac{1}{H},\\
        & \lambda \in \K^{*},
    \end{align*}
    whose dual, up to a change of sign in the objective value, writes
    \begin{align*}
        \min_{\eta} \ \ \ \ & \frac{1}{H} \eta\\
        s.t. \ \ \ 
        & \eta \rho \Kleq{\K} \bar{x},\\
        & \eta \geq 0,
    \end{align*}
    and has optimal value $H^{-1} \bar{\eta}$.
    Thus, $\delta = -H^{-1} \bar{\eta} = \bar{\lambda}^{T} \bar{x}$, which concludes the proof. \qed
\end{proof}

\begin{theorem}
    \label{thm:infeas:sufficient}
    If $\forall h, \bar{\eta} \geq H^{-1} \bar{\tau}_{h}$, then $(\bar{\lambda}, 0, 0, \bar{\lambda}, 0)$ is optimal for CGCP.
\end{theorem}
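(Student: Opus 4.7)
The plan is to prove optimality via weak duality between the CGCP and MCP (under the standard normalization): I will exhibit that $(\bar{\lambda}, 0, 0, \bar{\lambda}, 0)$ is CGCP-feasible with objective $-H^{-1}\bar{\eta}$, and then construct a matching MCP-feasible point with objective $H^{-1}\bar{\eta}$. Since the CGCP is always strongly feasible under the standard normalization (Theorem \ref{thm:standard:StrongDuality}) and hence strong duality holds, matching the primal-dual values certifies optimality.

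For CGCP feasibility, I would verify the constraints of \eqref{eq:CGCP:standard} directly with the candidate solution $(\alpha, \beta, u_h, \lambda_h, v_h) = (\bar{\lambda}, 0, 0, \bar{\lambda}, 0)$: the equality $\alpha = A^{T}u_{h} + \lambda_{h} + D_{h}^{T}v_{h}$ collapses to $\bar{\lambda} = \bar{\lambda}$; the inequality $\beta \leq b^{T}u_{h} + d_{h}^{T}v_{h}$ becomes $0 \leq 0$; the cone membership $(\bar{\lambda}, 0) \in \K^{*} \times \Q_{h}^{*}$ is immediate; and the standard normalization reduces to $H\,\rho^{T}\bar{\lambda} \leq 1$, which holds because the preceding lemma gives $\rho^{T}\bar{\lambda} = H^{-1}$ at optimum of the reduced program. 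The CGCP objective is then $\bar{\lambda}^{T}\bar{x} - 0 = -H^{-1}\bar{\eta}$ by the same lemma.

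For the MCP side, the natural candidate is $y_{h} = H^{-1}\bar{x}$, $z_{h} = H^{-1}$, $\eta = H^{-1}\bar{\eta}$. The affine equations $\sum_{h} y_{h} = \bar{x}$, $\sum_{h} z_{h} = 1$, and $A y_{h} = z_{h}b$ are trivial. For the $\K$ constraint, $y_{h} + \eta \rho = H^{-1}(\bar{x} + \bar{\eta}\rho) \in \K$ by the very definition of $\bar{\eta}$ in \eqref{eq:infeasible:etabar}. The delicate step is the disjunctive conic constraint: I need to show
\begin{equation*}
    D_{h} y_{h} + \eta \sigma_{h} - z_{h}d_{h} = H^{-1}\bigl(D_{h}\bar{x} + \bar{\eta}\sigma_{h} - d_{h}\bigr) \in \Q_{h},
\end{equation*}
and this is where the hypothesis $\bar{\eta} \geq H^{-1}\bar{\tau}_{h}$ comes in. Using the telescoping identity
\begin{equation*}
    \bar{\eta}\sigma_{h} - (d_{h} - D_{h}\bar{x}) = (\bar{\eta} - \bar{\tau}_{h})\sigma_{h} + \bigl[\bar{\tau}_{h}\sigma_{h} - (d_{h} - D_{h}\bar{x})\bigr],
\end{equation*}
the bracketed term lies in $\Q_{h}$ by the definition of $\bar{\tau}_{h}$ in \eqref{eq:infeasible:taubar}, and I combine this with the hypothesis and $\sigma_{h} \in \intr \Q_{h}$ to conclude membership in $\Q_{h}$ (possibly after rescaling by $H^{-1}$, which is where the $H^{-1}$ factor in the hypothesis is absorbed).

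The main obstacle is precisely that last verification: the assumption is phrased with a factor $H^{-1}$ that has to be tracked carefully against the $z_{h} = H^{-1}$ weights in the MCP, to ensure the cone membership really follows. Once this is done, weak duality ($\opt(CGCP) \geq -\opt(MCP)$ under the sign convention used in Section \ref{sec:normalization:standard}) pinches the candidate to optimality, which finishes the proof.
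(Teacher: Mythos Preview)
Your approach is exactly the paper's: construct the MCP-feasible point $y_{h}=H^{-1}\bar{x}$, $z_{h}=H^{-1}$, $\eta=H^{-1}\bar{\eta}$, verify it is feasible with objective $H^{-1}\bar{\eta}$, and match this against the CGCP value $-H^{-1}\bar{\eta}$ supplied by the preceding lemma. The explicit re-verification of CGCP feasibility of $(\bar{\lambda},0,0,\bar{\lambda},0)$ that you do first is harmless but redundant, since that is precisely what the lemma already established.

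Your instinct that the ``main obstacle'' is the disjunctive conic constraint is correct, but your proposed fix (``possibly after rescaling by $H^{-1}$'') does not work, and the paper does not resolve it either. Concretely, the constraint $D_{h}y_{h}+\eta\sigma_{h}\Kgeq{\Q_{h}}z_{h}d_{h}$ with the chosen point becomes $H^{-1}\bigl(D_{h}\bar{x}+\bar{\eta}\sigma_{h}\bigr)\Kgeq{\Q_{h}}H^{-1}d_{h}$, i.e., $D_{h}\bar{x}+\bar{\eta}\sigma_{h}\Kgeq{\Q_{h}}d_{h}$; by the very definition of $\bar{\tau}_{h}$ this holds iff $\bar{\eta}\geq\bar{\tau}_{h}$. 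The $H^{-1}$ has already been cancelled and there is nothing left to ``absorb'' it --- the stated hypothesis $\bar{\eta}\geq H^{-1}\bar{\tau}_{h}$ is strictly weaker and does not suffice. The paper's displayed chain simply asserts the inequality without comment, so the $H^{-1}$ in the theorem statement appears to be a typo for the condition $\bar{\eta}\geq\bar{\tau}_{h}$ that the argument actually uses. With that correction, your telescoping identity (drop the ``rescaling'' clause) gives the step cleanly.
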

\begin{proof}
    Let $y_{h} = \frac{1}{H} \bar{x}$, $z_{h} = \frac{1}{H}$, and $\eta = \frac{\bar{\eta}}{H}$.
    
    We have $A y_{h} = z_{h} b, \forall h$.
    Then, $y_{h} + \eta \rho = \frac{1}{H}(\bar{x} + \bar{\eta}\rho) \in \K$.
    Finally, 
    \begin{align*}
        D_{h} y_{h} + \eta \sigma_{h}
        & = H^{-1} \left( D_{h} \bar{x} + \bar{\eta} \sigma_{h} \right)\\
        & \Kgeq{\Q_{h}} H^{-1} d_{h}\\
        & = z_{h} d_{h}.
    \end{align*}
    Thus, $(y, z, \eta)$ is feasible for the MCP and its objective value is $H^{-1}\bar{\eta}$, which concludes the proof.
    \qed
\end{proof}

Theorem \ref{thm:infeas:sufficient} shows that, if $\bar{x}$ is ``sufficiently" conic-infeasible, as measured by the magnitude of $\bar{\eta}$, then there exists a $\K^{*}$ cut that is an optimal solution of the CGCP.
Note that there is no guarantee that this optimal solution is unique --in general, it is not-- nor that all CGCP-optimal solutions are $\K^{*}$ cuts.
Nevertheless, we have observed that, whenever the condition of Theorem \ref{thm:infeas:sufficient} was met, the obtained solution was indeed a $\K^{*}$ cut.

This suggests several strategies to avoid generating $\K^{*}$ cuts when solving the CGCP.
First, one can check the value of $\bar{\eta}$ and, if large enough as per Theorem \ref{thm:infeas:sufficient}, avoid solving the CGCP and add an optimal $\K^{*}$ cut directly.
Nevertheless, our initial experiments suggest that only a small number of cases are captured by Theorem \ref{thm:infeas:sufficient}.
Second, one can increase the magnitude of $\rho$, thus reducing the value of $\bar{\eta}$, to the point where the assumptions of Theorem \ref{thm:infeas:sufficient} no longer hold.
Note that, as the magnitude of $\rho$ becomes arbitrarily large, the standard normalization becomes equivalent to the uniform normalization of Section \ref{sec:normalization:uniform}.
Third, instead of imposing a normalization condition, one could fix the cut violation to a positive value, e.g., $1$, and optimize a different objective; the feasibility of the CGCP is then guaranteed by the fact that $\bar{x}$ is conic-infeasible.
This approach directly relates to the \emph{reverse-polar} CGLP introduced in \cite{Serra2020_ReformulatingDisjunctiveCut}.
Finally, one can simply try to avoid conic-infeasible points in the first place, for instance by refining the outer-approximation before cuts are separated.

\section{Cut lifting and strenghtening}
\label{sec:LiftStrengthen}

    In this section, we present conic extensions of the classical lifting and strengthening procedures in MILP.
    For simplicity, we will assume that $\K =  \K_{1} \times \K_{2}$ where $\K_{i} \subseteq \R^{n_{i}}, i=1, 2$ and, correspondingly, we write
    \[
        A = \begin{bmatrix} A_{1} & A_{2} \end{bmatrix},
        \ D_{h} = \begin{bmatrix} D_{1, h} & D_{2, h} \end{bmatrix},
        \ \lambda_{h} = \begin{bmatrix} \lambda_{1, h}\\ \lambda_{2, h} \end{bmatrix},
        \ \alpha = \begin{bmatrix} \alpha_{1}\\ \alpha_{2} \end{bmatrix},
        \ \bar{x}= \begin{bmatrix} \bar{x}_{1}\\ \bar{x}_{2} \end{bmatrix}.
    \]
    Finally, we assume that $\bar{x}_{2} = 0$.

\subsection{Cut lifting}
\label{sec:lifting}
    
    In MILP, one can formulate the CGLP in a reduced space, by projecting out the null components of $\bar{x}$, then recover a valid cut in the original space by a lifting procedure.
    Here, we show that a similar technique can be used in the conic setting.
    
    Recall that $\bar{x}_{2} = 0$, and consider the reduced CGCP
    \begin{subequations}
    \label{eq:CGCP:red}
    \begin{align}
        \min \ \ \ & \alpha_{1}^{T}\bar{x}_{1} - \beta\\
        s.t. \ \ \ 
        & \alpha_{1} = A_{1}^{T}u_{h} + \lambda_{1, h} + D_{1, h}^{T}v_{h}, && \forall h,\\
        & \beta \leq b^{T}u_{h} - v_{h}^{T}d_{h}, && \forall h,\\
        & (u_{h}, \lambda_{1, h}, v_{h}) \in \R^{m} \times \K_{1}^{*} \times \Q_{h}^{*}, && \forall h.
    \end{align}
    \end{subequations}
    All the normalization conditions considered in Section \ref{sec:normalization} can be adapted to the reduced CGCP, namely by normalizing only the $\alpha_{1}, \lambda_{1}, v$ components as appropriate.
    For instance, the $\alpha$ normalization would write $\| \alpha_{1} \|_{*} \leq 1$, and the uniform normalization $\sum_{h} \knorm{\rho_{1}}{\lambda_{1, h}} \leq 1$, for $\rho_{1} \in \intr \K_{1}$.
    
    Any solution $(\alpha, \beta, u, \lambda, v)$ that is feasible for the CGCP yields a feasible solution $(\alpha_{1}, \beta, u, \lambda_{1, .}, v)$ for the reduced CGCP.
    In addition, since $\bar{x}_{2} = 0$, the corresponding objective values are the same.
    Reciprocally, a feasible solution for the CGCP can be obtained from a feasible solution of the reduced CGCP as shown in Lemma \ref{lem:lifting}.
    
    \begin{lemma}
        \label{lem:lifting}
        Let $(\alpha_{1}, \beta, u, \lambda_{1, .}, v)$ be feasible for the reduced CGCP \eqref{eq:CGCP:red}.
        Let
        \begin{align*}
            \alpha_{2} & \Kgeq{\K_{2}^{*}} A_{2}^{T}u_{h} + D_{h, 2}^{T}v_{h}, & \forall h,
        \end{align*}
        and, $\forall h$, let $\lambda_{2, h} = \alpha_{2} - A_{2}^{T}u_{h} - D_{2, h}^{T}v_{2}$.
        Then, $(\alpha, \beta, u, \lambda, v)$ is feasible for the CGCP \eqref{eq:CGCP} and $\alpha^{T}\bar{x} - \beta = \alpha_{1}^{T}\bar{x}_{1} - \beta$.
    \end{lemma}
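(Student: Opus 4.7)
The proof is essentially a direct verification, since the lifting construction is explicit and every piece of the CGCP constraint decomposes blockwise under $\K = \K_1 \times \K_2$. My plan is to check, in turn, the two equality/inequality constraints and the conic membership of the lifted multipliers, and then observe that the objective is preserved for free because $\bar{x}_2 = 0$.

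First I would rewrite the CGCP equality constraint \eqref{eq:CGCP:f1} blockwise: writing $A^T u_h = (A_1^T u_h,\, A_2^T u_h)$ and $D_h^T v_h = (D_{1,h}^T v_h,\, D_{2,h}^T v_h)$, the constraint splits into a first-block identity $\alpha_1 = A_1^T u_h + \lambda_{1,h} + D_{1,h}^T v_h$ and a second-block identity $\alpha_2 = A_2^T u_h + \lambda_{2,h} + D_{2,h}^T v_h$. The former is feasibility in the reduced CGCP~\eqref{eq:CGCP:red}, while the latter is exactly a rearrangement of the definition $\lambda_{2,h} = \alpha_2 - A_2^T u_h - D_{2,h}^T v_h$. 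Constraint~\eqref{eq:CGCP:f2} and the memberships $u_h \in \R^m$, $v_h \in \Q_h^*$ are inherited verbatim from the reduced solution.

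Next I would verify $\lambda_h \in \K^*$. Since $\K^* = \K_1^* \times \K_2^*$, it suffices to show each block lies in its respective dual cone. The component $\lambda_{1,h} \in \K_1^*$ comes from feasibility in~\eqref{eq:CGCP:red}. For $\lambda_{2,h}$, the hypothesis $\alpha_2 \Kgeq{\K_2^*} A_2^T u_h + D_{2,h}^T v_h$ says exactly that $\lambda_{2,h} = \alpha_2 - A_2^T u_h - D_{2,h}^T v_h \in \K_2^*$. Worth remarking, though not strictly part of the lemma, is that such an $\alpha_2$ always exists: $\K_2^*$ is a proper cone, so picking any $\rho_2^* \in \intr \K_2^*$ and scaling by a sufficiently large $t > 0$ produces $t \rho_2^*$ dominating all finitely many vectors $A_2^T u_h + D_{2,h}^T v_h$ in the $\Kgeq{\K_2^*}$ order, since these vectors become negligible relative to $t \rho_2^*$ as $t$ grows.

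Finally, the objective matching is immediate: $\alpha^T \bar{x} = \alpha_1^T \bar{x}_1 + \alpha_2^T \bar{x}_2 = \alpha_1^T \bar{x}_1$ because $\bar{x}_2 = 0$, so $\alpha^T \bar{x} - \beta = \alpha_1^T \bar{x}_1 - \beta$. There is no real obstacle here; the lemma is a bookkeeping result whose only content is that the freedom in choosing $\alpha_2$ lets us absorb the second-block residual into $\lambda_{2,h}$ while keeping it dual-feasible. The useful takeaway of the argument is that only $\lambda_2$ depends on the lifting choice, so any $\alpha_2$ compatible with the dominance condition yields a valid cut whose restriction to the support of $\bar{x}$ is the cut produced by the reduced CGCP.
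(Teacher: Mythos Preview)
Your proof is correct and follows exactly the approach the paper intends: the paper simply states that the proof is immediate, and your blockwise verification of constraints \eqref{eq:CGCP:f1}--\eqref{eq:CGCP:domain} together with the observation $\bar{x}_{2}=0$ is precisely that immediate argument written out in full. The extra remark on the existence of a valid $\alpha_{2}$ is not part of the lemma but is harmless and anticipates the paper's subsequent discussion of the lifting conic problem.
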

    
    The proof of Lemma \ref{lem:lifting} is immediate.
    Note that the choice of $\alpha_{2}$ is not unique, especially if $\K_{2}^{*}$ possesses high-dimensional faces.
    Nevertheless, a reasonable requirement is to impose that $\alpha_{2}$ be minimal with respect to $\Kgeq{\K^{*}}$, i.e., that there does not exist a valid $\tilde{\alpha}_{2} \preceq_{\K^{*}} \alpha_{2}$.
    Indeed, if $\alpha_{2}$ is not $\Kgeq{\K^{*}}$-minimal, then $\alpha^{T}x \geq \beta$ is trivially dominated by $\tilde{\alpha}^{T}x \geq \beta$.
    
    A $\Kgeq{\K^{*}}$-minimal $\alpha_{2}$ is obtained by solving the lifting conic problem (LCP)
    \begin{subequations}
    \label{eq:LCP}
    \begin{align}
        \label{eq:LCP:objective}
        (LCP) \ \ \ \min_{\alpha_{2}} \ \ \ & \rho_{2}^{T}\alpha_{2}\\
        s.t. \ \ \ 
        \label{eq:LCP:lift}
        & \alpha_{2} \Kgeq{\K_{2}^{*}} A_{2}^{T}u_{h} + D_{h, 2}^{T}v_{h}, & \forall h,
    \end{align}
    \end{subequations}
    where $\rho_{2} \in \intr \K_{2}$, and denote by $\lambda_{2, h} \in \K_{2}^{*}$ the (conic) slack associated to \eqref{eq:LCP:lift}.
    First, since $\K_{2}$ is a proper cone, the LCP is strongly feasible.
    Second, we have
    \begin{align}
    \label{eq:LCP:obj:lambda}
        \rho_{2}^{T}\alpha_{2} &= 
        \rho_{2}^{T}\lambda_{2, h}
        + \rho_{2}^{T}\left( A_{2}^{T}u_{h} + D_{h, 2}^{T}v_{h} \right),
        & \forall h,
    \end{align}
    thereby showing that the objective value of the LCP is bounded below.
    In addition, let $(\alpha_{2}, \lambda_{2, .})$ be a feasible solution for the LCP with objective value $Z$.
    It then follows that
    \begin{align}
    \label{eq:LCP bound lambda}
        \forall h, \ \ \ \knorm{\rho_{2}}{\tilde{\lambda}_{2, h}}
        & \leq Z - \rho_{2}^{T}\left( A_{2}^{T}u_{h} + D_{h, 2}^{T}v_{h} \right)
    \end{align}
    in any feasible solution $(\tilde{\alpha}_{2}, \tilde{\lambda}_{2, .})$ with objective value $\tilde{Z} \leq Z$.
    Equation \eqref{eq:LCP bound lambda} implicitly bounds the magnitude of $\lambda_{2, .}$, thereby ensuring that the LCP is solvable.
    Finally, if $\alpha_{2}$ and $\tilde{\alpha}_{2}$ are feasible and $\tilde{\alpha}_{2} \preceq_{\K^{*}} \alpha_{2}$, then $\rho_{2}^{T}\tilde{\alpha}_{2} < \rho_{2}^{T}\alpha_{2}$ and $\alpha_{2}$ cannot be an optimal solution.
    Thus, any optimal solution of the LCP is $\Kgeq{\K^{*}}$-minimal.
    
    Whenever $\K_{2}$ is not irreducible, the LCP can be decomposed per conic component, yielding smaller problems.
    In the linear case, taking $\K_{2} = \R_{+}$, the LCP reduces to
    \begin{align*}
        \alpha_{2} = \max_{h} \left\{ A_{2}^{T}u_{h} + D_{2, h}^{T}v_{h} \right\},
    \end{align*}
    which is the classical lifting procedure for disjunctive cuts in MILP \cite{Fischetti2011}.
    
    Lemma \ref{lem:lifting} does not account for the normalization constraint in the CGCP.
    Although the lifting procedure does not modify the objective value, in general, the lifted solution $(\alpha, \beta, u, \lambda, v)$ is not an optimal solution of the normalized original CGCP.
    Thus, the reduction in the size of the CGCP, and the associated computational gains, come at the expense of potentially weaker cuts.

\subsection{Cut strengthening}
\label{sec:strengthening}

    Balas and Jeroslow's original derivation of monoidal strengthening for disjunctive cuts  \cite{BalasJeroslow1980_StrengtheningCutsMixed} exploited the non-negativity and integrality of \emph{individual} variables to strengthen the corresponding cut coefficients.
    Since, in general, a conic constraint may involve several variables, it is not clear whether and how one can extend this approach to the conic setting.
    Thus, we restrict our attention to split cuts, and propose a conic extension of monoidal strengthening that builds on the geometric idea of Wolsey's proof of Theorem 2.2 in \cite{BalasEtAl1996_Mixed01}.
    
    For simplicity, we consider the pure integer case.
    Given a split disjunction $(\pi^{T}x \leq \pi_{0}) \vee (\pi^{T}x \geq \pi_{0} +  1)$, the CGCP writes
    \begin{align*}
        \min \ \ \ 
        & \alpha^{T}\bar{x} - \beta\\
        s.t. \ \ \ 
        & \alpha = A^{T}u_{1} + \lambda_{1} - v_{1} \pi\\
        & \alpha = A^{T}u_{2} + \lambda_{2} + v_{2} \pi\\
        & \beta \leq b^{T}u_{1} - v_{1} \pi_{0}\\
        & \beta \leq b^{T}u_{2} + v_{2} (\pi_{0} + 1)\\
        & \lambda_{1}, \lambda_{2} \in \K^{*}, v_{1}, v_{2} \geq 0.
    \end{align*}
    
    \begin{lemma}
    \label{lem:strengthening}
        Let $(\alpha, \beta, u, \lambda, v)$ be feasible for the CGCP,
        and let $\tilde{\alpha}_{2}$ and $\delta \pi \in \Z^{n_{2}}$ such that
        \begin{align*}
            \tilde{\alpha}_{2} & \Kgeq{\K_{2}^{*}} A_{2}^{T}u_{1} - v_{1} (\pi_{2} + \delta \pi),\\
            \tilde{\alpha}_{2} & \Kgeq{\K_{2}^{*}} A_{2}^{T}u_{2} + v_{2} (\pi_{2} + \delta \pi).
        \end{align*}
        Then, $\tilde{\alpha}^{T}x \geq \beta$ is a valid inequality for the disjunctive set
        \begin{align*}
            \tilde{\D} =
            \left\{
                x
                \ \middle| \ 
                \begin{array}{l}
                    A x = b, x \in \K\\
                    \tilde{\pi}^{T}x \leq \pi_{0}
                \end{array}
            \right\}
            \cup
            \left\{
                x
                \ \middle| \ 
                \begin{array}{l}
                    A x = b, x \in \K\\
                    \tilde{\pi}^{T}x \geq \pi_{0} + 1
                \end{array}
            \right\}
            ,
        \end{align*}
        where $\tilde{\alpha} = (\alpha_{1}, \tilde{\alpha}_{2})$ and $\tilde{\pi} = (\pi_{1}, \pi_{2} + \delta \pi)$.
    \end{lemma}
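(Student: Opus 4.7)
The plan is to prove validity directly, by taking an arbitrary $x \in \tilde{\D}$ and establishing $\tilde{\alpha}^T x \geq \beta$ through a Farkas-style aggregation that mirrors the usual disjunctive proof, but with $\pi$ replaced by $\tilde{\pi}$. I will split into two cases according to which branch of the strengthened disjunction $x$ satisfies, handle the first case in detail, and note that the second is symmetric. Notice that the CGCP feasibility equation $\alpha = A^T u_1 + \lambda_1 - v_1 \pi$ decomposes blockwise into $\alpha_1 = A_1^T u_1 + \lambda_{1,1} - v_1 \pi_1$ and $\alpha_2 = A_2^T u_1 + \lambda_{1,2} - v_1 \pi_2$, so the strengthening hypothesis $\tilde{\alpha}_2 \succeq_{\K_2^*} A_2^T u_1 - v_1(\pi_2 + \delta\pi)$ is precisely what is needed to turn $(\alpha_1, \tilde{\alpha}_2)$ into an $\R^{n} \times \{0\}$-valid aggregation against the modified split coefficient $\tilde{\pi}$.

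Concretely, for the first case $\tilde{\pi}^T x \leq \pi_0$, I introduce the slack $\tilde{\lambda}_{1,2} = \tilde{\alpha}_2 - A_2^T u_1 + v_1(\pi_2 + \delta\pi) \in \K_2^*$ and compute
\begin{align*}
\tilde{\alpha}^T x
&= (A_1^T u_1 + \lambda_{1,1} - v_1 \pi_1)^T x_1 + (A_2^T u_1 + \tilde{\lambda}_{1,2} - v_1 \tilde{\pi}_2)^T x_2 \\
&= u_1^T (A_1 x_1 + A_2 x_2) + \lambda_{1,1}^T x_1 + \tilde{\lambda}_{1,2}^T x_2 - v_1 \tilde{\pi}^T x \\
&\geq u_1^T b - v_1 \pi_0 \\
&\geq \beta,
\end{align*}
where $Ax = b$ is used in the third line, non-negativity of $\lambda_{1,1}^T x_1$ and $\tilde{\lambda}_{1,2}^T x_2$ follows from $(x_1, x_2) \in \K_1 \times \K_2$ and $(\lambda_{1,1}, \tilde{\lambda}_{1,2}) \in \K_1^* \times \K_2^*$, the sign of $-v_1 \tilde{\pi}^T x \geq -v_1 \pi_0$ uses $v_1 \geq 0$ and the case assumption, and the final inequality is the CGCP constraint $\beta \leq b^T u_1 - v_1 \pi_0$. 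For the second case $\tilde{\pi}^T x \geq \pi_0 + 1$, the symmetric derivation uses the second CGCP constraints $\alpha = A^T u_2 + \lambda_2 + v_2 \pi$ and $\beta \leq b^T u_2 + v_2 (\pi_0 + 1)$, together with the second dominance $\tilde{\alpha}_2 \succeq_{\K_2^*} A_2^T u_2 + v_2(\pi_2 + \delta\pi)$.

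The derivation is essentially bookkeeping, so I do not anticipate a hard step; the one point that deserves care is checking that the block decomposition of the CGCP equations is consistent with the definition of $\tilde{\lambda}_{1,2}$, and observing that the integrality hypothesis $\delta\pi \in \Z^{n_2}$ plays \emph{no} role in the validity for $\tilde{\D}$ itself—it is only needed externally to ensure that $\tilde{\D}$ remains a valid relaxation of $\X$, so that the strengthened cut is also valid for the original $(MICP)$.
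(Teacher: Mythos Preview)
The paper does not supply an explicit proof of this lemma; it is stated and immediately followed by discussion, in the same spirit as Lemma~\ref{lem:lifting}, whose proof the paper calls ``immediate.'' Your direct Farkas-aggregation argument is correct and is exactly the natural verification one would expect: introduce the modified conic slack $\tilde{\lambda}_{1,2}\in\K_2^*$, expand $\tilde{\alpha}^Tx$ blockwise, and use $Ax=b$, the conic dual pairings, and the branch inequality to reach $b^Tu_1 - v_1\pi_0 \geq \beta$. Your closing observation that the integrality hypothesis $\delta\pi\in\Z^{n_2}$ is irrelevant to validity for $\tilde{\D}$ and only matters for passing from $\tilde{\D}$ to $\X$ is also correct, and matches the paper's remark immediately following the lemma.
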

    
    In the mixed-integer case, one simply needs to set to zero, in Lemma \ref{lem:strengthening}, the components of $\delta \pi$ that correspond to continuous variables.
    Since $\delta \pi \in \Z^{n_{2}}$, $\tilde{\pi} \in \Z^{n}$ and $(\tilde{\pi}^{T}x \leq \pi_{0}) \vee (\tilde{\pi}^{T}x \geq \pi_{0} + 1) $ is a valid disjunction for $\X$.
    Thus, the strengthened inequality is valid for $\X$.
    
    Similar to the lifting case, a reasonable requirement is for $\tilde{\alpha}_{2}$ to be minimal with respect to $\Kgeq{\K_{2}^{*}}$.
    Following the same approach as in Section \ref{sec:lifting}, we obtain the \emph{Cut-Strengthening Problem}
    \begin{subequations}
    \label{eq:CSP}
        \begin{align}
        \label{eq:CSP:objective}
        (CSP) \ \ \ \min_{\tilde{\alpha}_{2}} \ \ \ & \rho_{2}^{T}\tilde{\alpha}_{2}\\
        s.t. \ \ \ 
            & \tilde{\alpha}_{2} \Kgeq{\K_{2}^{*}} A_{2}^{T}u_{1} - v_{1} (\pi_{2} + \delta \pi),\\
            & \tilde{\alpha}_{2} \Kgeq{\K_{2}^{*}} A_{2}^{T}u_{2} + v_{2} (\pi_{2} + \delta \pi),\\
            & \delta \pi \in \Z^{n_{2}}.
    \end{align}
    \end{subequations}
    
    Similar to the LCP, the CSP can be decomposed per conic component, so that its resolution remains tractable.
    Furthermore, in the linear case with $\K_{2} = \R_{+}$, the CSP reduces to
    \begin{align*}
        \tilde{\alpha}_{2} 
        &= \max_{\delta \in \Z} \left\{ \min \left( A_{2}^{T}u_{1} - v_{1}\delta, A_{2}^{T}u_{2} + v_{2} \delta \right) \right\},
    \end{align*}
    which is the classical monoidal strengthening of Balas and Jeroslow \cite{BalasJeroslow1980_StrengtheningCutsMixed}.

\section{Computational results}
\label{sec:results}

    In this section, we investigate the practical behavior of the normalization conditions of Section \ref{sec:normalization} along two lines: the progression of the gap closed, and the characteristics of the obtained cuts.
    Indeed, the choice of normalization impacts the numerical stability and computational efficiency of solving the CGCP, thereby affecting the rate at which gap is closed, in terms of both time and number of rounds.
    
    Several solvers, e.g., CPLEX, Gurobi and Mosek, support classes of \MICONIC\ problems.
    Nevertheless, to the best of our knowledge, CPLEX is the only one that exploits non-linear information when generating lift-and-project cuts.
    Thus, we use CPLEX as a baseline, and restrict our comparison to MISOCP instances, which is the only class of non-linear \MICONIC\ problems supported by CPLEX.

\subsection{Instances}
    
    We select an initial testset of $114$ MISOCP instances from the CBLIB \cite{CBLIB2014} collection.
    Each instance is first reformulated in the standard form \eqref{eq:MICP} and, since the MathOptInterface wrapper of CPLEX does not directly support constraints of the form $x \in \K$ where $\K$ is a rotated second-order cone, all such constraints are reformulated as second-order cone constraints.

    Each instance is solved using the CPLEX outer-approximation algorithm on a single thread, and all other parameters are left to their default value.
    Instances with a root gap smaller than $1 \%$ are removed from the testset, as well as those for which no integer-feasible solution was found by CPLEX after one hour of computing time.
    This yields a testset of $101$ instances, divided into 7 groups.
    Table \ref{tab:res:instances} reports, for each group, the number of instances (\#Inst) in that group, and the average number of: variables, integer variables, constraints, non-polyhedral cones, and non-zero coefficients.

    \begin{table}
        \centering
        \caption{Instance statistics}
        \label{tab:res:instances}
        \begin{tabular}{lrrrrrr}
            \toprule
            Group              & \#Inst &     Variables &     Integers &     Constraints &     Cones &      Nz coeff.\\
            \midrule
            \texttt{clay}         & 12 &    829 &     35 &    659 &     80 &   1711\\
            \texttt{flay}         & 10 &    535 &     28 &    408 &      4 &   1121\\
            \texttt{fmo}          & 39 &    620 &     48 &    479 &     15 &   1843\\
            \texttt{slay}         & 14 &   1217 &     92 &    936 &     14 &   2653\\
            \texttt{sssd}         & 16 &    694 &    153 &    547 &     18 &   1414\\
            \texttt{tls}          &  2 &    295 &     61 &    227 &     10 &    817\\
            \texttt{uflquad}      &  8 &  37569 &     23 &  30203 &   3750 &  71341\\
            \bottomrule
        \end{tabular}
    \end{table}

\subsection{Implementation details}

    Our implementation\footnote{Code available at \url{https://github.com/mtanneau/CLaP}} is coded in Julia.
    All solvers, namely, CPLEX 12.10 \cite{CPLEX}, Gurobi 9.0 \cite{Gurobi} and Mosek 9.2 \cite{Mosek}, are accessed through the solver-agnostic interface \texttt{MathOptInterface} \cite{Legat2020_MOI}.
    Experiments are carried out on an Intel Xeon E5-2637@3.50GHz CPU, 128GB RAM machine running Linux.

    \paragraph{Baseline.}
        We use CPLEX internal lift-and-project cut generation at the root node as a baseline.

        For each instance, we run CPLEX outer-approximation algorithm on a single thread, no presolve, no heuristics, and all cuts de-activated with the exception of lift-and-project cuts which are set to the most aggressive setting.
        The \texttt{CutsFactor} parameter is set to $10^{30}$, thereby removing any limit on the number of cuts that can be added to the formulation, and the maximum number of cutting plane passes is set to $200$.
        Finally, the node limit is set to zero, i.e., only the root node is explored, and we set a time limit of $1$ hour.
        
        Each CPLEX ``cut pass" consists\footnote{Personal communication with CPLEX developers.} of one round of cuts, plus additional components such as heuristics and reduced cost fixing.
        Cut generation stops as soon as no violated cuts are found, or all violated cuts are rejected.
        Since we deactivate heuristics and presolve, it is fair to assume that, in the present setting, one CPLEX pass corresponds to one round of cuts.
        Nevertheless, due to CPLEX internal work limits, e.g., numerical tolerances on a cut's coefficients, early termination may occur even though violated cuts were identified.
        Whether or not this is the case cannot be inferred from the CPLEX log.

    \paragraph{Cut separation.}
        Cuts are separated by rounds in a callback, and submitted to CPLEX as user cuts.
        Each round proceeds as follows.
        
        First, $\bar{x}$ is cleaned, i.e, we set $\bar{x}_{j} = 0$ for all $j$ such that $| \bar{x}_{j} | \leq 10^{-7}$, and $\bar{\eta}$ is computed as per Equations \eqref{eq:infeasible:etabar}.
        If $\bar{\eta} > \epsilon_{\K}$, the current  outer-approximation is refined by adding violated $\K^{*}$ cuts.
        This refinement step is cheap, and it is repeated until $\bar{x} \leq \epsilon_{\K}$, up to a maximum of $50$ times.
        Large values for $\epsilon_{\K}$ increase the likelihood of generating $\K^{*}$ cuts from the CGCP, while small values can lead to an adverse tailing-off effect within the refinement process; following initial tests, we set $\epsilon_{\K} = 0.05$.
        
        Second, for each fractional coordinate of $\bar{x}$, the CGCP is formed and solved to generate a lift-and-project cut.
        We do not project the CGCP onto the support of $\bar{x}$, i.e., no lifting is performed.
        For the standard normalization, the sufficient conditions of Theorem \ref{thm:infeas:sufficient} are checked first and, if met, $\K^{*}$ cuts are added and no CGCP is solved.
        If no CGCP solution is available, or if the objective value of the CGCP is greater than $-10^{-4}$, no cut is generated and we proceed to the next fractional coordinate.
        
        Third, the values of $v_{1}$ and $v_{2}$ are checked to identify $\K^{*}$ cuts, which are disaggregated and added to the formulation.
        Otherwise, the cut is strengthened following the procedure of Section \ref{sec:strengthening}.
        For stability, strengthening is performed only if $|v_{1}| + |v_{2}| \geq 10^{-4}$.
        Then, also for numerical stability, we set $\alpha_{j}$ to zero for every $j$ such that $|\alpha_{j}| \leq 10^{-7}$.
        Similarly, if $|\beta| \leq 10^{-8}$, the cut's right-hand side is set to zero.
        Finally, the cut is passed to the solver.

    \paragraph{Compact CGCP.}
        Lift-and-project cuts are obtained from elementary split disjunctions, i.e., disjunctions of the form
        \begin{align*}
            \left(
                -\pi^{T}x \geq \pi_{0}
            \right)
            \vee
            \left(
                \pi^{T}x \geq \pi_{0} +1
            \right),
        \end{align*}
        where $\pi_{0} \in \Z$ and $\pi = e_{j}$ for given $j \in \{1, ..., p \}$.
        When formulating the CGCP, we set $u_{1}$ to zero, and substitute out $\alpha$ and $\beta$.
        This yields the compact CGCP
        \begin{align*}
            \min_{u, \lambda, v} \ \ \ 
            & \bar{x}^{T}\lambda_{1} - (\pi^{T}\bar{x} - \pi_{0})v_{1} + t_{1}\\
            s.t. \ \ \ 
            & A^{T}u_{2} + (\lambda_{2} - \lambda_{1}) + (v_{1} + v_{2}) \pi = 0\\
            & b^{T}u_{2} + (t_{1} - t_{2}) + (v_{1} + v_{2}) \pi_{0} + v_{2} = 0\\
            & \lambda_{1}, \lambda_{2} \in \K^{*},\\
            & v_{1}, v_{2} \geq 0\\
            & t_{1}, t_{2} \geq 0,
        \end{align*}
        where $t_{1}, t_{2}$ are non-negative slacks associated with constraints \eqref{eq:CGCP:f2}.
        The relation $\alpha = \lambda_{1} - v_{1} \pi$ allows to formulate the normalization condition in the $\lambda, v$ space.
        
        In practice, the equality constraints $A \bar{x} = b$ are satisfied only up to numerical tolerances.
        This can cause the CGCP to be unbounded whenever the $u$ multipliers are not bounded.
        We have observed that setting $u_{1}$ to zero, and writing the objective as in the compact CGCP above, greatly improve the numerical stability of the CGCP, especially when $\bar{x}$ is obtained from an interior-point method.

    \paragraph{Other details.}
        Cuts that are generated are never added to the CGCP formulation, i.e., we only separate rank-1 lift-and-project cuts.
        All CGCPs are solved with Gurobi, which we found to be more robust here.
        We use the barrier algorithm with a single thread, no presolve, and we disable the computation of dual variables by setting the \texttt{QCPDual} parameter to 0.

        We implement a simple procedure to identify implied-integer variables.
        If a variable $y$ appears in a constraint of the form $a^{T}x \pm y = b$, where $b \in \Z$, all coefficients of $a$ are integers, and $x$ is a vector of integer or implied-integer variables, then $y$ is an implied-integer variable.
        This step is repeated until no additional implied-integer variable is detected.
        Then, when generating cuts, the coefficients of implied-integer variables can be strengthened using the technique of Section \ref{sec:strengthening}.
        
        Since we use an outer-approximation algorithm, we do not include the trivial normalization in our experiments.
        The $\alpha$ normalization is formulated using the $\ell_{2}$ norm.
        For the polar normalization, we take $\gamma = x^{*} - \bar{x}$, where $x^{*}$ is the analytic center of $\C$, obtained from solving $(CP)$ with zero objective by an interior-point method.
        Finally, for the standard and uniform normalization, and the computation of $\bar{\eta}$ in Equation \eqref{eq:infeasible:etabar}, we set $\rho = (\rho_{1}, ..., \rho_{N})$, where
        \begin{align*}
            \rho_{i} &= \left\{
                \begin{array}{cl}
                    1 & \text{ if } \K_{i} = \R_{+}\\
                    -1 & \text{ if } \K_{i} = \R_{-}\\
                    (1, 0, 0) & \text{ if } \K_{i} = \mathcal{L}_{3}
                \end{array}
            \right.
            ,
        \end{align*}
        and, since we only consider split cuts, we take $\sigma_{1} = \sigma_{2} = 1$.

\subsection{Gap closed}
    
    Tables \ref{tab:res:gap:10rounds} and \ref{tab:res:gap:200rounds} report the performance of each approach after 10 and 200 rounds of cuts, respectively.
    For each normalization (Alpha, Polar, Standard, Uniform) and the baseline (CPLEX), we report the geometric mean of the percent gap closed (Gap), and of the computing time (CPU), in seconds.
    The percent gap closed is measured as
    \begin{align*}
        Gap = 100 \times \frac{Z^{*} - Z_{CP}}{Z_{MICP} - Z_{CP}},
    \end{align*}
    where $Z_{CP}, Z_{MICP}, Z^{*}$ are the optimal value of the continuous relaxation, the objective value of the best known integer solution, and the current lower bound after adding cuts, respectively.
    All geometric means are computed with a shift of $1$.
    
    Recall that, in our approach, each round may include the separation of $\K^{*}$ cuts to refine the current outer approximation while, to the best of our knowledge, a CPLEX pass does not.
    In addition, total computing times include the time of building the CGCP, which typically represent $30$ to $40\%$ of total time, and up to $90\%$ for \texttt{uflquad} instances.
    These large building times are in part due to limitations of the Gurobi Julia wrapper, and could be significantly reduced by using a lower-level interface.
    Thus, direct comparisons between CGCP-based approaches and the baseline should be cautious.
    
    \begin{table}
        \centering
        \caption{Gap closed and computing time - 10 rounds}
        \label{tab:res:gap:10rounds}
        \begin{tabular}{lrrrrrrrrrr}
            \toprule
            &  \multicolumn{2}{c}{CPLEX} & \multicolumn{2}{c}{Alpha} & \multicolumn{2}{c}{Polar} & \multicolumn{2}{c}{Standard} & \multicolumn{2}{c}{Uniform}\\
            \cmidrule(rl){2-3} \cmidrule(rl){4-5} \cmidrule(rl){6-7} \cmidrule(rl){8-9} \cmidrule(rl){10-11}
            Group  & Gap & CPU & Gap & CPU & Gap & CPU & Gap & CPU & Gap & CPU \\
            \midrule
\texttt{clay}      &  2.13 &    1.0  &  0.00 &   12.0  &  0.00 &    4.4  &  5.69 &    6.0  &  2.92 &    5.7 \\
\texttt{flay}      &  0.32 &    0.3  &  0.45 &    3.7  &  1.50 &    2.1  &  0.88 &    2.1  &  2.40 &    1.7 \\
\texttt{slay}      &  3.67 &    2.4  &  0.65 &   23.2  &  0.15 &    8.8  & 15.81 &   11.2  & 10.57 &    8.7 \\
\texttt{fmo}       &  6.42 &    0.7  &  0.00 &   11.7  &  0.00 &    5.2  & 25.31 &    5.6  & 25.59 &    4.6 \\
\texttt{sssd}      & 45.07 &    0.9  & 36.49 &   12.3  &  0.00 &    6.6  & 93.44 &    5.3  & 96.47 &    7.9 \\
\texttt{tls}       &  8.13 &    0.5  &  2.56 &    2.6  & 20.57 &    1.9  &  8.47 &    1.4  & 13.36 &    1.5 \\
\texttt{uflquad}   &  0.46 &   56.8  &  0.00 & 1307.9  & 12.50 & 1568.7  & 18.61 & 1769.8  & 18.98 & 1721.5 \\
\midrule
All                &  5.24 &    1.5  &  1.02 &   16.9  &  0.46 &    8.5  & 17.97 &    9.2  & 17.27 &    8.6 \\
            \bottomrule
        \end{tabular}
    \end{table}
    
    \begin{table}
        \centering
        \caption{Gap closed and computing time - 200 rounds}
        \label{tab:res:gap:200rounds}
        \begin{tabular}{lrrrrrrrrrrrrrrr}
            \toprule
            &  \multicolumn{2}{c}{CPLEX} & \multicolumn{2}{c}{Alpha} & \multicolumn{2}{c}{Polar} & \multicolumn{2}{c}{Standard} & \multicolumn{2}{c}{Uniform}\\
            \cmidrule(rl){2-3} \cmidrule(rl){4-5} \cmidrule(rl){6-7} \cmidrule(rl){8-9} \cmidrule(rl){10-11}
            Group  & Gap & CPU & Gap & CPU & Gap & CPU & Gap & CPU & Gap & CPU \\
            \midrule
\texttt{clay}      & 24.48 &   32.3  &  0.00 &  251.4  &  0.00 &    6.2  & 22.67 &  184.9  & 22.72 &  165.7 \\
\texttt{flay}      &  6.42 &    1.0  &  2.62 &   50.1  &  8.59 &    8.6  &  6.65 &   23.0  & 41.25 &   19.1 \\
\texttt{slay}      & 24.70 &    9.7  & 16.75 &  490.2  &  0.20 &   19.6  & 83.44 &  196.2  & 84.85 &   87.7 \\
\texttt{fmo}       & 25.89 &   10.5  &  0.00 &  243.0  &  0.00 &   21.7  & 26.00 &   68.1  & 26.01 &   46.7 \\
\texttt{sssd}      & 95.88 &    4.1  & 95.65 &  284.4  &  0.00 &    8.3  & 99.43 &   14.9  & 99.46 &   17.8 \\
\texttt{tls}       & 41.47 &   12.8  &  3.07 &   19.9  & 20.96 &    7.1  & 31.90 &   13.0  & 34.76 &   14.5 \\
\texttt{uflquad}   &  5.39 &  475.0  &  0.00 & 1307.9  & 16.22 & 3940.6  & 23.15 & 4178.0  & 23.81 & 4140.0 \\
\midrule
All                & 24.79 &   11.9  &  2.59 &  256.9  &  0.71 &   21.9  & 32.67 &   85.0  & 39.13 &   65.5 \\
            \bottomrule
        \end{tabular}
    \end{table}

    First, overall, with the exception of the \texttt{tls} and \texttt{uflquad} instances, the polar normalization performs the worst, with $0.46 \%$ and $0.71 \%$ gap closed after up to $10$ and $200$ rounds, respectively.
    This poor performance is primarily caused by premature termination of the cut generation procedure: in numerous cases, the CGCP is unbounded, no solution is available, and no cut is generated.
    
    Second, the $\alpha$ normalization is the slowest on average, with computing times that are typically $2$ to $3$ times higher than other approaches.
    This behavior results from the CGCP \eqref{eq:CGCP:alpha} being more computationally intensive than for alternative normalizations.
    Indeed, the normalization \eqref{eq:CGCP:alpha:normalization} is non-linear and, as highlighted in Section \ref{sec:normalization:alpha}, the CGCP \eqref{eq:CGCP:alpha} may not be solvable, then causing slow convergence and more interior-point iterations.
    The $\alpha$ normalization also performs second-worst with respect to gap closed, with $1.02 \%$ and $2.59 \%$ gap closed after $10$ and $200$ rounds, respectively.
    
    Third, the standard and uniform normalizations display similar performance, with the exception of \texttt{flay} instances, for which the uniform normalization closes significantly more gap than all other approaches.
    While both normalizations close similar gaps, namely, $17.97 \%$ and $17.27 \%$ after $10$ rounds, and $32.67 \%$ and $39.13 \%$ after 200 rounds,
    the uniform normalization is slightly faster overall, taking an average $65.5$s to complete up to 200 rounds, against $85.0$s for the standard normalization.
    Furthermore, no numerical issues were recorded for either normalization, thereby demonstrating the benefits of ensuring strong feasibility of both the CGCP and the MCP.
    
    Fourth and last, the standard and uniform normalizations are competitive with CPLEX in terms of gap closed.
    Both normalizations close more gap than CPLEX after 200 rounds, mainly on \texttt{slay} and \texttt{uflquad} instances, as well as \texttt{flay} instances for the uniform normalization.
    In particular, they close over three times more gap than CPLEX in the first 10 rounds.
    Despite the limited validity of the comparison, this behavior is encouraging, since closing more gap early is a desirable feature in practice.

    Next, Figures \ref{fig:res:gap:flay02m}, \ref{fig:res:gap:sssd} and \ref{fig:res:gap:tls4} illustrate the progression of the gap closed for instances \texttt{flay02m}, \texttt{sssd-weak-15-4} and \texttt{tls4}, respectively.
    Each figure displays, for each normalization and the baseline, the percent gap closed as a function of the number of rounds, and of computing time.
    Again, recall that a direct comparison between our approach and CPLEX is not meaningful on a per-round basis, thus, we focus on overall trends.

    \begin{figure}
        \centering
            \includegraphics[width=100mm]{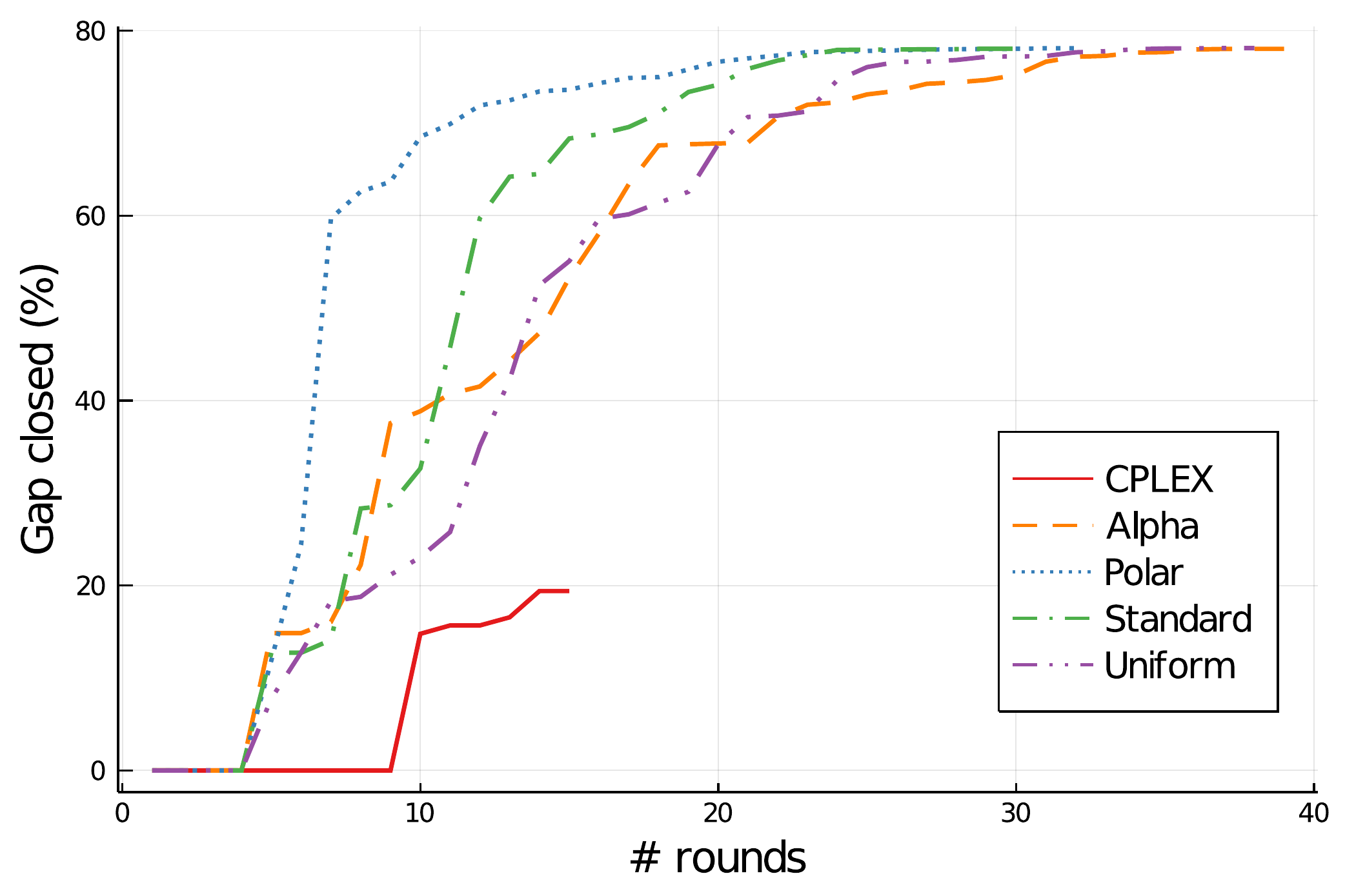}
        \hfill
            \includegraphics[width=100mm]{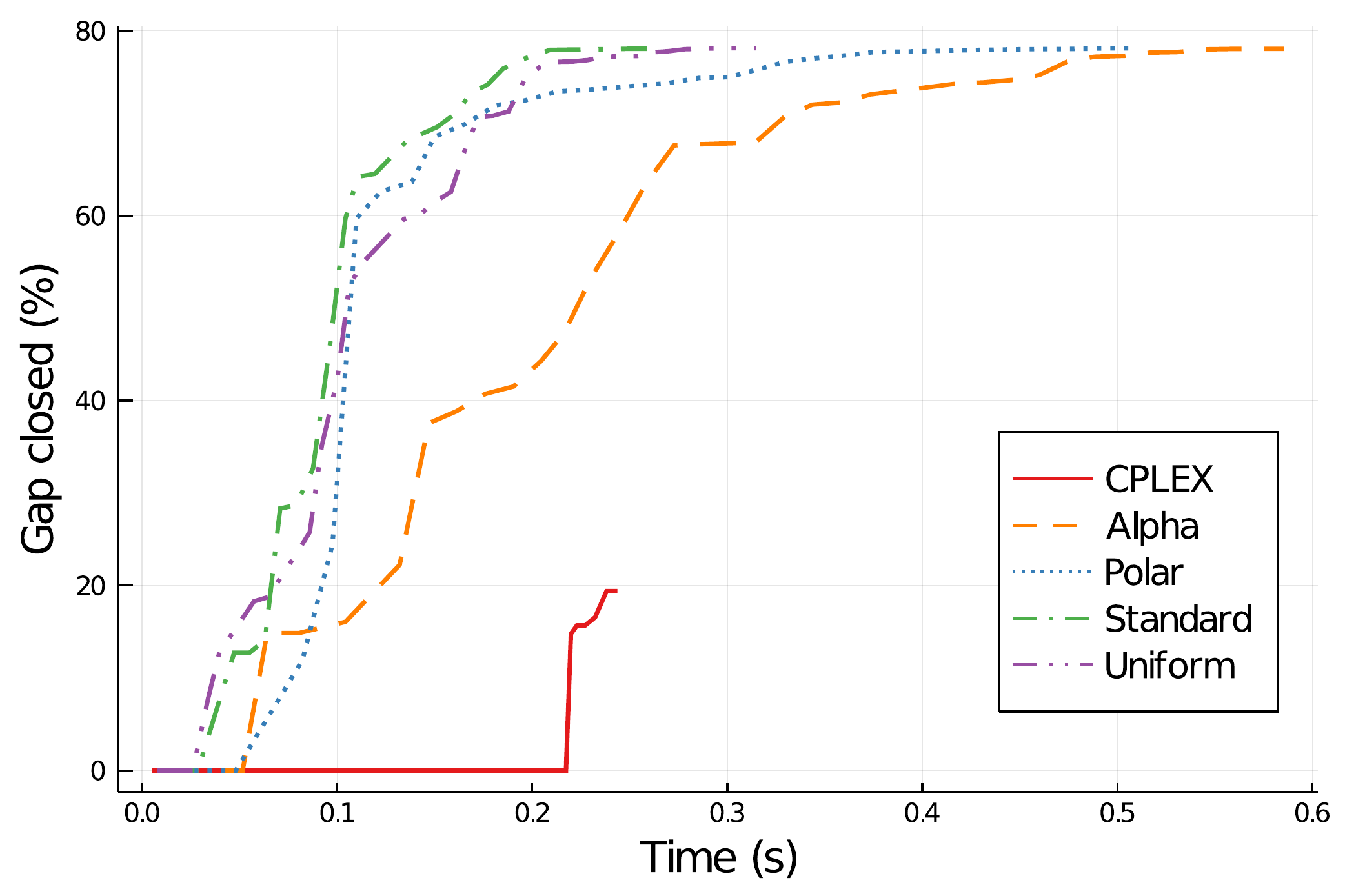}
        \caption{Instance \texttt{flay02m}: gap closed per round (top) and time (bottom)}
        \label{fig:res:gap:flay02m}
    \end{figure}
    
    \begin{figure}
        \centering
            \includegraphics[width=100mm]{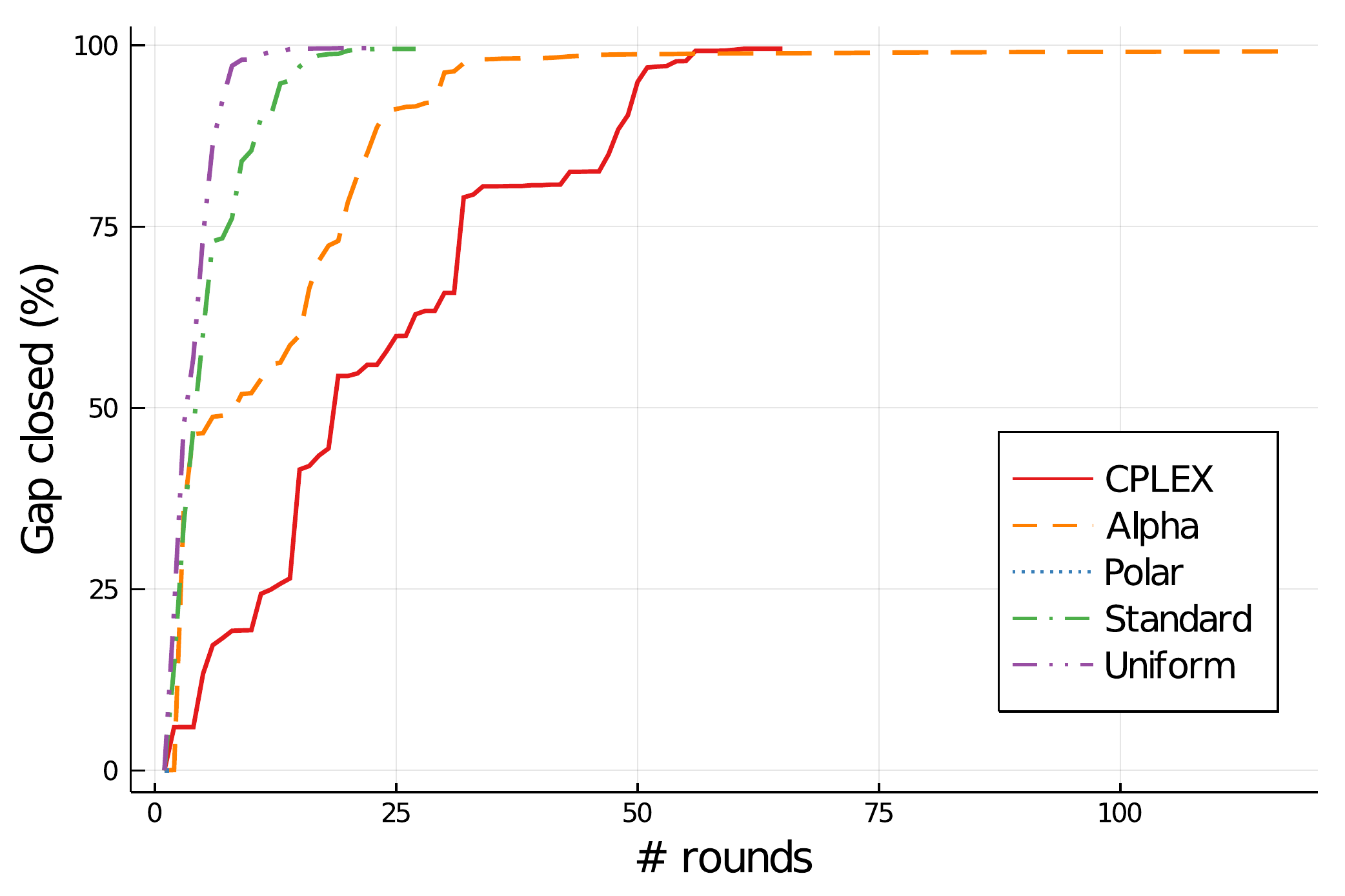}
        \hfill
            \includegraphics[width=100mm]{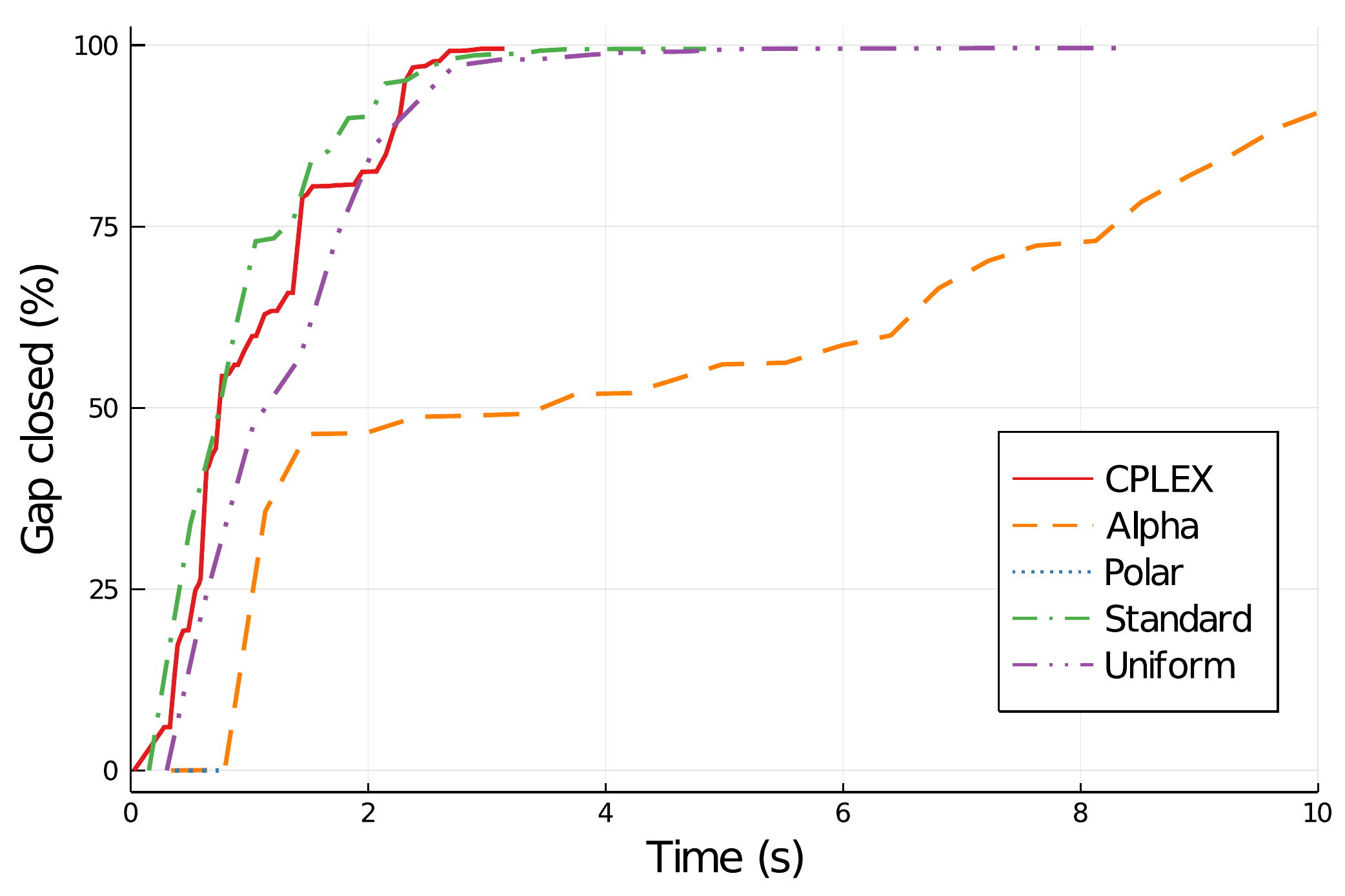}
        \caption{Instance \texttt{sssd-weak-15-4}: gap closed per round (top) and time (bottom). The polar normalization was terminated after two rounds.}
        \label{fig:res:gap:sssd}
    \end{figure}

    \begin{figure}
        \centering
            \includegraphics[width=100mm]{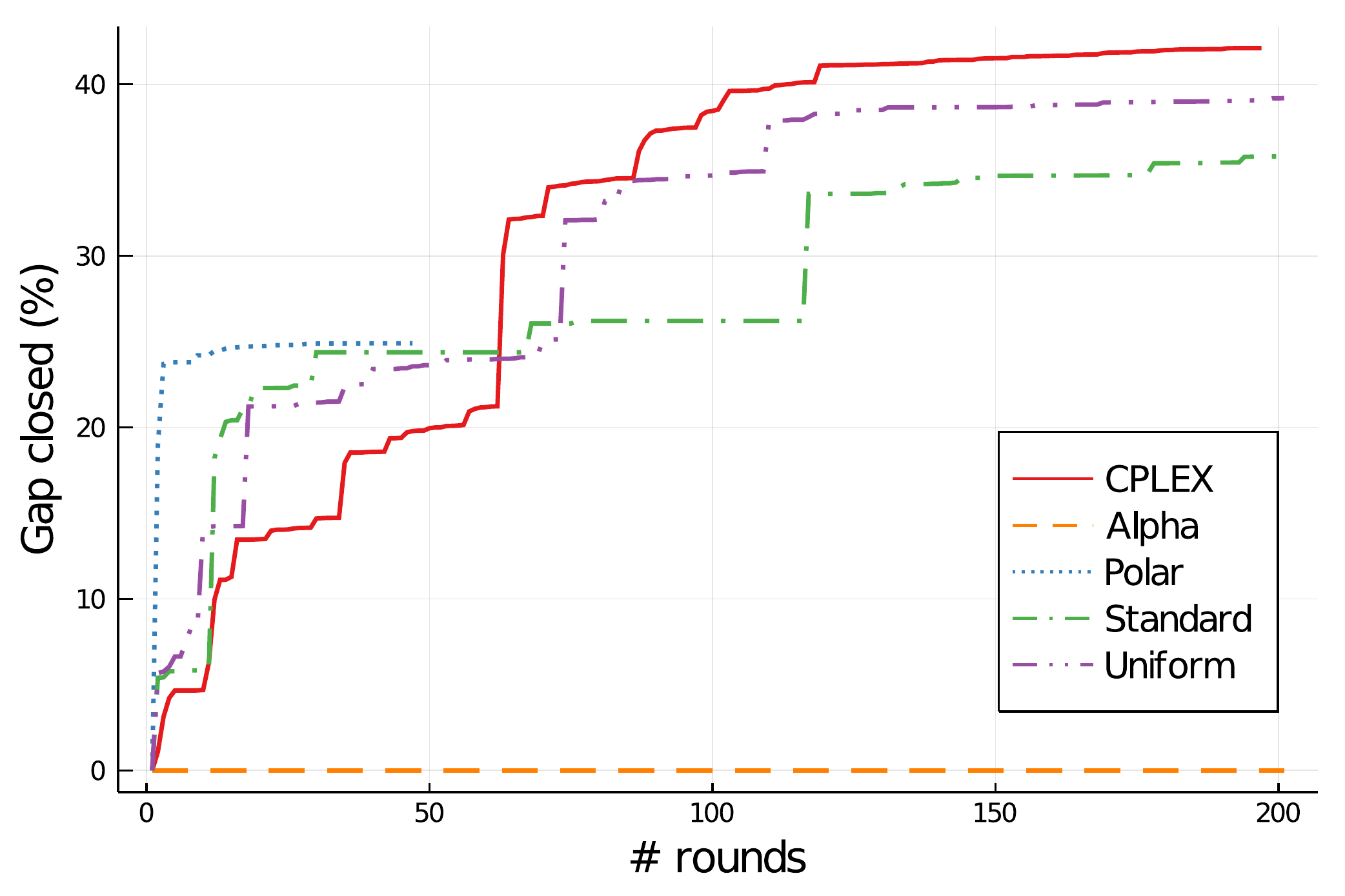}
        \hfill
            \includegraphics[width=100mm]{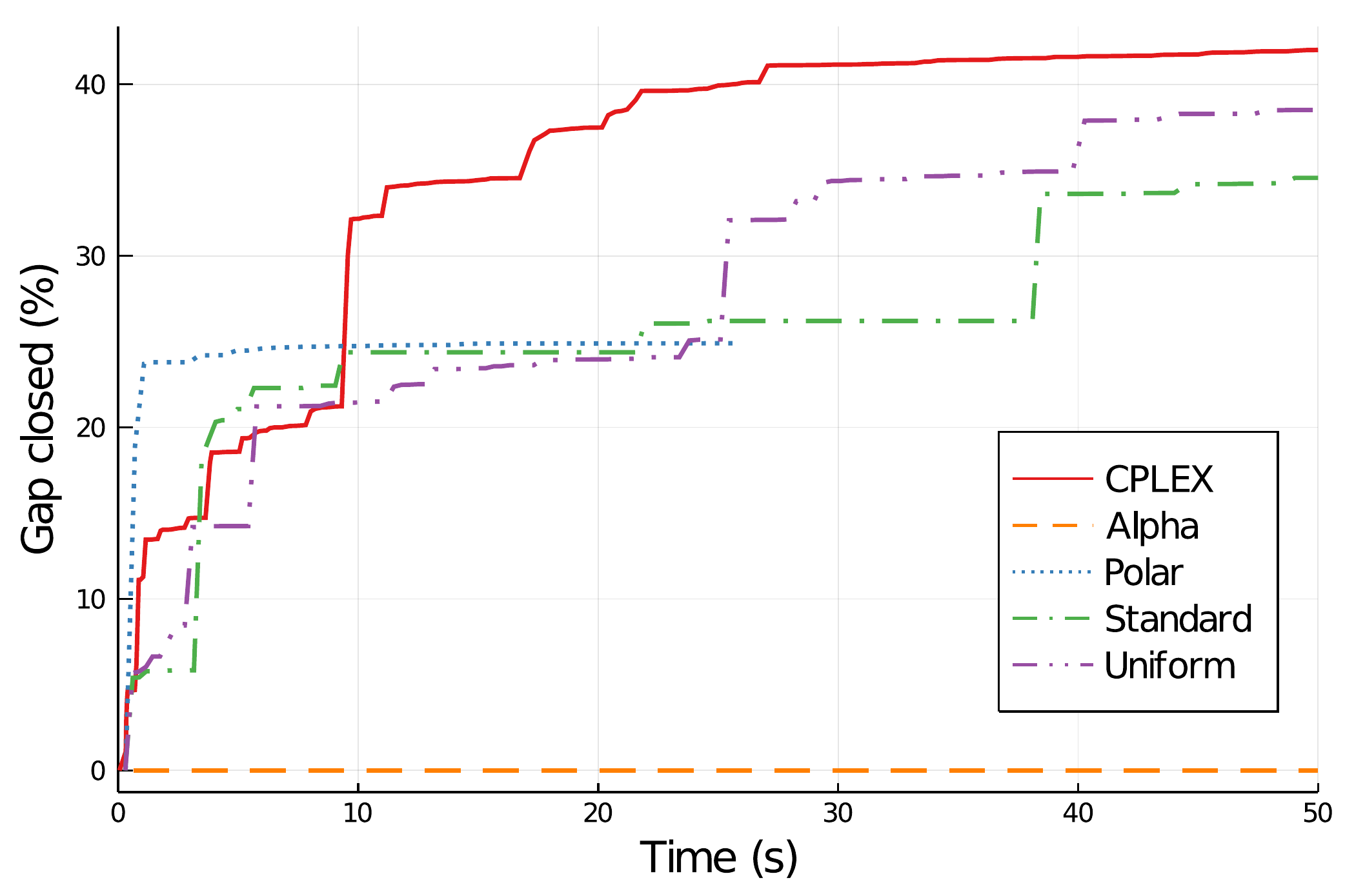}
        \caption{Instance \texttt{tls4}: gap closed per round (top) and time (bottom)}
        \label{fig:res:gap:tls4}
    \end{figure}

    Overall, Figures \ref{fig:res:gap:flay02m}, \ref{fig:res:gap:sssd} and \ref{fig:res:gap:tls4} corroborate the previous observations from Tables \ref{tab:res:gap:10rounds} and \ref{tab:res:gap:200rounds}: the $\alpha$ normalization is the slowest, while the standard and uniform normalizations display similar progressions and allow to close more gap than CPLEX in the first few rounds.
    We also observe in Figure \ref{fig:res:gap:flay02m} and Figure \ref{fig:res:gap:tls4} that, when the polar normalization does not run into numerical issues, it can outperform other approaches for the most gap closed in the first few rounds of cuts.
    
    Furthermore, when comparing the gap closed with respect to time, we observe that the CGCP-based separation procedure is competitive with CPLEX.
    This is most striking for instance \texttt{flay02m}, where all four normalizations outperform CPLEX.
    Note that CPLEX terminates after applying only $15$ rounds of cuts, most likely due to internal work limits.
    For instances \texttt{sssd-weak-15-4} and \texttt{tls4}, the standard and uniform normalizations are initially on-par with CPLEX, while the $\alpha$ normalization is the slowest.

\subsection{Cut sparsity}

    We now study, for each normalization, the characteristics of the cuts obtained from solving the CGCP.
    Relevant statistics are reported after 10 and 200 rounds, in Table \ref{tab:res:sparstity:10rounds} and Table \ref{tab:res:sparsity:200rounds}, respectively.
    For each normalization, we report the geometric mean of the total number of disaggregated $\K^{*}$ cuts ($\K^{*}$) and of lift-and-project cuts (L\&P).
    We also report the geometric mean of the density of lift-and-project cuts ($\%$nz), measured as the percentage of non-zero coefficients per cut; the density of $\K^{*}$ cuts is not reported because they are always disaggregated.
    Here, $\K^{*}$ cuts refer specifically to cuts obtained from the CGCP that were identified as $\K^{*}$ cuts, i.e., we do not consider those added in the refinement steps.
    For example, the first row of Table \ref{tab:res:sparstity:10rounds} indicates that, for \texttt{clay} instances and after 10 rounds, the CGCP with standard normalization yielded an average of $348.8$ disaggregated $\K^{*}$ cuts, and $67.8$ lift-and-project cuts with an average density of $2.5 \%$.

    \begin{table}
        \centering
        \caption{Cut statistics - 10 rounds}
        \label{tab:res:sparstity:10rounds}
        \begin{tabular}{lrrrrrrrrrrrr}
            \toprule
            & \multicolumn{3}{c}{Alpha} & \multicolumn{3}{c}{Polar} & \multicolumn{3}{c}{Standard} & \multicolumn{3}{c}{Uniform}\\
            \cmidrule(rl){2-4}
            \cmidrule(rl){5-7}
            \cmidrule(rl){8-10}
            \cmidrule(rl){11-13}
            Group  & $\K^{*}$ & L\&P & $\%$nz & $\K^{*}$ & L\&P & $\%$nz & $\K^{*}$ & L\&P & $\%$nz & $\K^{*}$ & L\&P & $\%$nz \\
            \midrule
            \texttt{clay}    &    0.0 &  134.8 &  44.9 &    0.0 &    0.6 &   4.7 &  348.8 &   67.8 &   2.5 &   39.9 &   92.2 &   1.4\\
            \texttt{flay}    &    0.0 &  118.7 &  27.6 &    0.0 &   15.2 &   4.3 &   11.9 &  106.9 &   3.6 &    6.9 &  115.1 &   2.3\\
            \texttt{slay}    &    0.0 &  267.0 &  27.1 &    0.0 &   21.9 &   6.4 &   39.1 &  183.3 &   1.9 &    8.6 &  205.7 &   1.3\\
            \texttt{fmo}     &   31.0 &  244.6 &  35.5 &    5.9 &   12.9 &   8.8 &   96.6 &  235.7 &   1.7 &   63.1 &  229.8 &   1.3\\
            \texttt{sssd}    &    0.0 &  282.1 &  16.5 &    0.0 &   45.2 &   8.0 &  164.5 &  127.8 &   7.4 &  131.2 &  147.3 &   8.0\\
            \texttt{tls}     &  362.1 &  123.7 &  13.3 &   93.1 &  112.5 &   5.8 &   90.9 &   84.1 &   7.2 &   36.1 &  104.2 &   7.0\\
            \texttt{uflquad} &    0.0 &   41.3 &  34.0 &    0.0 &  121.8 &   1.3 &  210.6 &   51.4 &   0.5 &  150.1 &   63.3 &   0.5\\
            \midrule
            All     &    3.3 &  188.4 &  29.7 &    1.3 &   16.6 &   6.2 &   93.9 &  143.2 &   2.5 &   44.1 &  157.1 &   1.9\\
            \bottomrule
        \end{tabular}
    \end{table}
    
    \begin{table}
        \centering
        \caption{Cut statistics - 200 rounds}
        \label{tab:res:sparsity:200rounds}
        \begin{tabular}{lrrrrrrrrrrrr}
            \toprule
            & \multicolumn{3}{c}{Alpha} & \multicolumn{3}{c}{Polar} & \multicolumn{3}{c}{Standard} & \multicolumn{3}{c}{Uniform}\\
            \cmidrule(rl){2-4}
            \cmidrule(rl){5-7}
            \cmidrule(rl){8-10}
            \cmidrule(rl){11-13}
            Group  & $\K^{*}$ & L\&P & $\%$nz & $\K^{*}$ & L\&P & $\%$nz & $\K^{*}$ & L\&P & $\%$nz & $\K^{*}$ & L\&P & $\%$nz \\
            \midrule
            \texttt{clay}    &    0.0 & 2568.8 &  44.9 &    0.0 &    0.8 &   4.6 & 6366.4 & 1237.9 &   3.8 & 3003.0 & 2880.2 &   2.1\\
            \texttt{flay}    &    0.0 & 1950.7 &  24.4 &    0.0 &   77.1 &   5.0 &  382.7 & 1511.9 &   4.1 &  193.4 & 1479.9 &   3.2\\
            \texttt{slay}    &    0.4 & 5486.4 &  21.6 &    0.0 &   43.9 &   6.5 & 2939.0 & 2309.1 &   5.6 &  470.8 & 1760.1 &   4.6\\
            \texttt{fmo}     & 1158.0 & 5059.3 &  33.9 &   39.6 &   59.9 &   9.1 &  458.2 & 2491.2 &   4.1 &  296.4 & 2062.4 &   3.3\\
            \texttt{sssd}    &    0.0 & 5821.0 &  10.4 &    0.0 &   57.1 &   8.2 &  342.2 &  429.0 &  14.2 &  222.4 &  362.3 &  12.6\\
            \texttt{tls}     & 2006.4 &  742.2 &  13.7 &  300.2 &  359.4 &   6.3 &  550.3 &  576.3 &   8.1 &  415.7 &  796.2 &   8.1\\
            \texttt{uflquad} &    0.0 &   41.3 &  34.0 &    0.0 &  285.3 &   1.2 &  410.3 &  129.4 &   0.4 &  344.2 &  153.1 &   0.3\\
            \midrule
            All     &   17.6 & 2894.3 &  26.1 &    3.7 &   44.7 &   6.4 &  756.7 & 1256.5 &   4.7 &  388.5 & 1232.3 &   3.7\\
            \bottomrule
        \end{tabular}
    \end{table}

    First, very few cuts are obtained with the polar normalization, with the exception of \texttt{tls} and \texttt{uflquad} instances.
    As mentioned earlier, this is the result of premature termination of the cut generation due to numerical issues in the CGCP.
    Nevertheless, the obtained cuts are relatively sparse, with only $6.2\%$ and $6.4\%$ non-zeros after 10 and 200 rounds, respectively.
    
    Second, with the exception of \texttt{sssd} instances, the $\alpha$ normalization always yields denser cuts than other normalizations, with an average of $29.7\%$ and $26.1\%$ non-zeros coefficients after 10 and 200 rounds, respectively.
    Denser cuts have an adverse effect on performance, as they slow down the resolution of the current linear relaxation, and are more prone to numerical errors.
    Overall, few $\K^{*}$ cuts are obtained, except for \texttt{fmo} and \texttt{tls} instances.
    
    Third, fewer $\K^{*}$ cuts are obtained with the uniform normalization than with the standard normalization, which is not surprising given the remarks in Section \ref{sec:infeas}.
    Furthermore, cuts obtained with the uniform normalization are slightly sparser, with $1.9\%$ and $3.7\%$ non-zero coefficients after 10 and 200 rounds for the uniform normalization, compared to $2.5\%$ and $4.7\%$ for the standard normalization.
    Interestingly, the average cut density after 200 rounds is almost twice as large as after 10 rounds, indicating that cuts become denser in the later rounds.

\section{Conclusion}
\label{sec:conclusion}

Motivated by the impact of the disjunctive framework in MILP and the recent success of conic formulations for \MICONV, we have investigated the computational aspects of disjunctive cuts in \MICONIC.
Building on conic duality, we have extended Balas' cut-generating linear program into a cut-generating conic program, and studied the fundamental role of the normalization condition in its resolution.
In doing so, we have answered several relevant questions, especially from the numerical standpoint, left open by previous developments in the area, and have raised new ones.

\subsection{What we have learned...}

From a theoretical standpoint, we have shown that the normalization condition in the CGCP impacts not only the theoretical properties of the obtained cuts, but also whether the CGCP is solvable in the first place.
The latter has direct consequences for the numerical robustness of the CGCP resolution.
In particular, we have introduced conic normalizations that guarantee conic strong duality, without any assumptions on the well-posedness of the considered disjunctions.
Furthermore, we have identified the risk of generating $\K^{*}$ cuts when separating conic-infeasible points, and suggested several strategies to alleviate it.
Finally, we have proposed extensions of lifting and cut strengthening to the conic setting, which may provide further computational benefits.

From a computational standpoint, we have investigated the practical behavior of several normalization conditions, on a diverse set of instances.
These experiments indicate that our CGCP-based cut separation is competitive with a state-of-the-art \MICONIC\ solver, being able to close more gap in the early stages.
In particular, the numerical robustness of the standard and uniform normalization translates in faster separation and more gap closed.
Finally, carefully managing the outer approximation, so as to avoid large violations of conic constraints, appears critical to the cut generation performance.

\subsection{... and what lies ahead}

Several theoretical questions are left open.
First, while we only consider linear cutting planes, whether conic cuts can be separated efficiently in the general case, and how to best integrate them within existing \MICONIC\ algorithms, remains an open question.
Second, dominance relations between valid inequalities were introduced in, e.g, \cite{KilincKarzan2015_MinimalValidInequalities,KilincKarzanSteffy2016_SublinearInequalitiesMixed}.
Characterizing optimal solutions of the normalized CGCP in that perspective may offer further insight on the importance of normalization.
Third, a unifying framework that generalizes monoidal strengthening to the \MICONIC\ setting could further improve the practical effectiveness of disjunctive cuts.
Recent developments in duality theory and cut-generating functions for \MICONIC\ could offer the tools for doing so.

Computational experience with cuts in \MICONIC\ remains scarce, and further research in that direction is needed.
Decades of experience in MILP indicate that classes of disjunctive cuts with fast separation rules, e.g., Gomory Mixed-Integer cuts and Mixed-Integer Rounding cuts, yield the greatest computational benefits.
Similar strategies for \MICONIC\ would undoubtedly be beneficial.
In a similar fashion, Fischetti et al. \cite{Fischetti2011} have shown that scaling impacts the cuts yielded by the CGLP with standard normalization: there is no indication that \MICONIC\ should be any different.
Finally, the CGCP form allows to experiment with a number of algorithmic techniques for separating disjunctive cuts, whose practical effectiveness will most likely be problem-specific.

\section*{Acknowledgements}
    The second author was supported by an FRQNT excellence doctoral scholarship, and a Mitacs Globalink research award.
    We thank Pierre Bonami, Andrea Tramontani and Sven Wiese for several helpful discussions on the topic.

\bibliographystyle{spmpsci}      
\bibliography{refs}   

\end{document}